\definecolor{MyLinkColor}{rgb}{0,0,0.4}
\newcommand{\R}{{\mathbb R}}
\newcommand{\C}{{\mathbb C}}
\newcommand{\bA}{{\mathbb A}}
\newcommand{\bB}{\mathbb{B}}
\newcommand{\N}{{\mathbb N}}
\newcommand{\A}{\mathcal{A}}
\newcommand{\B}{\mathcal{B}}
\newcommand{\cO}{\mathcal{O}}
\newcommand{\cU}{\mathcal{U}}
\newcommand{\cV}{\mathcal{V}}
\newcommand{\cF}{\mathcal{F}}
\newcommand{\kL}{\mathcal{L}}
\newcommand{\re}{\mathop{\rm Re}\nolimits}
\newcommand{\PV}{\mathop{\rm PV}\nolimits}
\newcommand{\supp}{\mathop{\rm supp}\nolimits}
\newcommand{\wt}{\widetilde}
\newcommand{\ov}{\overline}
\newcommand{\oo}{\overline\omega}
\newcommand{\p}{\partial}
\newcommand{\e}{\varepsilon}
\newcommand{\0}{\Omega}
\newcommand{\G}{\Gamma}
\newtheorem{thm}{Theorem}[section]
\newtheorem{lemma}[thm]{Lemma}
\newtheorem{cor}[thm]{Corollary}
\theoremstyle{remark} 
\newtheorem{rem}[thm]{Remark}
\numberwithin{equation}{section} 
\title[The multiphase Muskat problem   in 2D]{The multiphase Muskat problem with general viscosities in two dimensions}
\thanks{Partially supported by DFG Research Training Group~2339 ``Interfaces, Complex Structures, and Singular Limits in Continuum Mechanics - Analysis and Numerics''}
\author{Jonas Bierler}
\address{Fakult\"at f\"ur Mathematik, Universit\"at Regensburg,   93040 Regensburg, Deutschland.}
\email{jonas.bierler@ur.de}
 \email{bogdan.matioc@ur.de}
\author{Bogdan--Vasile Matioc}
\subjclass[2010]{35R37; 76D27; 35K55}
\keywords{Muskat problem; Parabolic evolution equation;  Singular integral; Well-posedness}
\begin{document}

\begin{abstract}
In this paper we study the two-dimensional multiphase Muskat problem describing the motion of three immiscible fluids with general viscosities in a vertical homogeneous porous medium under the influence of gravity.
Employing  Rellich type identities in the regime where the fluids are ordered according to their viscosities, respectively a Neumann series argument  when the fluids are  not ordered by viscosity,  
we may   recast the  governing equations  as a strongly coupled nonlinear and nonlocal evolution problem for the functions that parameterize the sharp interfaces that separate the fluids.  
This problem is of parabolic type if the Rayleigh-Taylor condition is satisfied at each interface.
Based on this property, we then show that the multiphase Muskat problem is well-posed in all $L_2$-subcritical Sobolev spaces and that
it features  some parabolic smoothing  properties.
\end{abstract}
  
\maketitle

\section{Introduction}\label{Sec:1}

\subsection{The multiphase Muskat problem}
Three-phase flows in porous media are nowadays of great interest in many application such as seawater or CO$_2$ injection for enhanced oil recovery or the  CO$_2$ storage 
in depleted oilfields, cf. e.g. \cite{ASL20, BT12, AP14}. 
In this paper we study the multidimensional Muskat problem which is a classical model for  the motion of three incompressible fluids with positive constant densities 
\begin{equation}\label{stre}
\rho_3>\rho_2>\rho_1
\end{equation}
in a vertical porous medium. 
In our setting the porous medium is homogeneous with permeability constant $k>0$, the flow is two-dimensional, and  the fluid phase cover the entire plane~$\R^2$.
Moreover, the fluids are assumed to be separated  at each time instant~${t\geq0}$ by sharp interfaces which are parameterized as  graphs over the real line, that is
 \[
\Gamma_{f}^{c_\infty}(t)\coloneqq{}\{(x,c_\infty+f(t,x))\,:\, x\in\R\} \qquad\text{and}\qquad  \Gamma_h(t)\coloneqq{}\{(x,h(t,x))\,:\, x\in\R\},
 \] 
where $c_\infty$ is a fixed positive constant.  
We restrict to the nondegenerate situation where the   distance between the graphs $\Gamma_{f}^{c_\infty}(t)$ and $\Gamma_h(t)$ is positive during the flow.
Moreover, the  fluids layers are arranged according to their density.
 More precisely,  the fluid with  density $\rho_i$ is located at $\0_i(t)\subset\R^2$, $1\leq i\leq 3$, where $\0_2(t)\coloneqq{}\R^2\setminus\overline{\0_1(t)\cup\0_3(t)},$ 
\begin{align*}
& \0_1(t)\coloneqq{}\{(x,y)\in\R^2\,:\, y>f(t,x)+c_\infty\},\\[1ex]
& \0_3(t)\coloneqq{}\{(x,y)\in\R^2\,:\, y<h(t,x)\}.
\end{align*}

 Since the fluids are incompressible and the flow occurs at low Reynolds numbers,   the motion in the fluid layers    is governed by the following equations 
 \begin{subequations}\label{PB}
\begin{equation}\label{eq:S1}
\left.\begin{array}{rllllll}
v_i(t)\!\!\!\!&=&\!\!\!\!-\cfrac{k}{\mu_i}\big(\nabla p_i(t)+(0,\rho_i g)\big)   , \\[1ex]
{\rm div}\,  v_i(t)\!\!\!\!&=&\!\!\!\!0
\end{array}
\right\}\qquad\text{in $ \0_i(t)$, $1\leq i\leq 3$},
\end{equation} 
  where $\mu_i,$ $p_i(t),$  and $v_i(t)\coloneqq{}(v_i^1(t),v_i^2(t))$ is the viscosity, pressure, and  velocity, respectively, of the fluid located at $\0_i(t)$.
  The positive constant $g$ is the Earth's gravity. 
  The equation $\eqref{eq:S1}_1$ is Darcy's law which is the standard  model  for flows in porous media, cf. e.g. \cite{Be88}.
 
Neglecting surface tension effects,  the equations \eqref{eq:S1} are supplemented by the boundary conditions 
\begin{equation}\label{eq:S2}
\left.\begin{array}{rllllll}
p_i(t)\!\!\!\!&=&\!\!\!\!p_{i+1}(t), \\[1ex]
 \langle v_i(t)| \nu_i(t)\rangle\!\!\!\!&=&\!\!\!\!  \langle v_{i+1}(t)| \nu_i(t)\rangle 
\end{array}
\right\}\qquad\text{on $\p\0_i(t)\cap\p\0_{i+1}(t)$, $i=1,\, 2$,}
\end{equation} 
where    $\nu_i(t) $ is the unit normal at $\p\0_i(t)\cap\p\0_{i+1}(t)$ pointing into $\0_{i}(t)$ and~${\langle \, \cdot\,|\,\cdot\,\rangle}$  the Euclidean scalar product in~$\R^2$.
Additionally, the flow should satisfy  the  far-field boundary  conditions
 \begin{equation}\label{eq:S3}
\left.\begin{array}{rlllll}
v_i(t,x,y)\!\!\!\!&\to 0 \qquad\text{for  $|(x,y)|\to\infty$,  $1\leq i\leq 3$},\\[1ex]
f^2(t,x)+h^2(t,x)\!\!\!\!&\to 0 \qquad\text{for  $|x|\to\infty$,}
\end{array}\right\}
\end{equation}
that is far away the flow  is nearly stationary.

Finally,  the normal velocity of the interfaces  equals  the normal component of the velocity field at the free boundary, meaning that
 \begin{equation}\label{eq:S4}
  \left.\begin{array}{rllllll}
 \p_tf(t)\!\!\!\!&=&\!\!\!\! \langle v_1(t)| (-\p_x f(t),1)\rangle \qquad\text{on   $\Gamma_{f}^{c_\infty}(t),$}\\[1ex]
  \p_th(t)\!\!\!\!&=&\!\!\!\! \langle v_2(t)| (-\p_x h(t),1)\rangle \qquad\text{on   $\G_h(t),$}
 \end{array}\right\}
\end{equation}  
and  the  interfaces are assumed to be known initially
\begin{equation}\label{eq:S5}
(f,h)(0,\cdot )= (f_0,h_0).
\end{equation} 
\end{subequations}

\subsection{Summary of results.} Despite its physical relevance, the well-posedness of the multiphase Muskat problem  \eqref{PB} 
was established before only in the particular case when the fluids have equal viscosities, cf. \cite{BM21x, CG10}.
Related to \eqref{PB} we mention also the references \cite{EMM12a, EMW18} where a similar scenario is considered, but the upper fluid is taken to be air at uniform pressure.
The question whether the two interfaces can come into contact has been also addressed in the literature in the particular setting of fluids with equal viscosities.
 Namely, for solutions which are not global but bounded in ${\rm C}^{1+\gamma}(\R^2)$, $\gamma\in(0,1)$, squirt singularities (the fluid interfaces touch along a curve segment) were excluded in
    \cite{CG10, BM21x}.  
 Moreover, uniform bounds on the curvature of the interfaces prevent also  the formation of splash singularities, that is single point collisions of the  interfaces, see \cite{GS14}.
The situation is  different in the framework of the  one-phase Muskat problem where splash singularities are one of
 the blow-up mechanisms, see \cite{CCFG16, CP18}, while squirt singularities cannot occur \cite{CP17, CP18}.

In contrast, the classical  two-phase Muskat problem  has been studied recently quite intensively, the results ranging from well-posedness, cf. e.g. \cite{AN2021, MBV18, MBV19, AL20, BCS16, BCG14, CCG11, CG07, FN2021, NP20},  to  global existence results, cf. e.g.\cite{MBV19, Cam19, CCGS13, CGCSS14x, CGSV17, DLL17, GGPS19, PS17}, and to results on the  formation of singularities, cf. e.g. \cite{CCFG13, GG14, CCFGL12, CCF11}.
We point out that the space $H^{3/2}(\R)$ is a critical space for the two-phase Muskat problem \cite{AM21x, NP20} and  also for \eqref{PB}.

In order to  establish our main result, see Theorem~\ref{MT1}, where we prove in particular that \eqref{PB} is locally well-posed in all subcritical $L_2$-Sobolev spaces, we  use the same strategy as for the two-phase flow \cite{MBV18, MBV20} and reexpress \eqref{PB} as an evolution problem
 for the interfaces between the fluids, see Section~\ref{Sec:3}. 
 This problem is nonlinear, strongly coupled, and  of parabolic type in a regime where the Rayleigh-Taylor condition is satisfied.
 Due to the strong coupling we encounter new difficulties compared to the analysis in \cite{MBV18, MBV20} which we outline below when introducing our main result. 

To be more precise, we first introduce some notation.
Let $r\in(3/2,2)$ be fixed and set
\[
\cO_r\coloneqq\{X\coloneqq(f,h)\in H^{r}(\R)^2\,:\, f+c_\infty>h\}, 
\] 
which is an open subset of $H^{r}(\R)^2$. 
In order to reformulate~\eqref{PB} as an evolution problem for the pair $X:=(f,h)\in \cO_r$ we first prove in Theorem~\ref{T:1} that the velocity is determined at each time instant by~$X$. 
This property is equivalent  to establishing  the unique solvability of the equation
\begin{equation}\label{RESint}
(1-A_\mu\A(X))[\oo]=\Theta X',
\end{equation}
cf. Theorem~\ref{T:1}, where $\A(X)$, see \eqref{OpAB'}, is the adjoint of the double layer potential for Laplace's equation associated to the hypersurface ${\Gamma_h\cup\Gamma_{f}^{c_\infty}}$,
\begin{equation}\label{RESint'}
A_\mu\coloneqq{\rm diag\,}(a_\mu^1,a_\mu^2)\qquad\text{and}\qquad \Theta\coloneqq{\rm diag\,}(\Theta_1,\Theta_2),
\end{equation}
and the entries of these diagonal matrices are given by
\begin{equation}\label{RESint''}
\Theta_1:=\cfrac{(\rho_1-\rho_2)gk}{\mu_1+\mu_2},\quad\Theta_2:=\cfrac{(\rho_2-\rho_3)gk}{\mu_2+\mu_3},\qquad a_\mu^1\coloneqq\cfrac{\mu_1-\mu_2}{\mu_1+\mu_2},\qquad a_\mu^2\coloneqq\cfrac{\mu_2-\mu_3}{\mu_2+\mu_3}.
\end{equation}
The unknown $\oo$ in \eqref{RESint} is related to the jump of the velocities at each interface, see the proof of Theorem~\ref{T:1}.
The solvability of \eqref{RESint}  is addressed in Theorem~\ref{Tinvert} and  is  a major departure from to the two-phase setting because $\A(X)$ appears here multiplied by a matrix which has in general different entries
(with possibly opposite sign), and~\eqref{RESint} cannot be reformulated as a problem related to the spectrum of $\A(X)$ as in the two-phase setting. 
In Section~\ref{Sec:2} we identify a (maximal) unbounded open subset $\cU_r$ of $\cO_r$ with the property that for each~$X\in \cU_r$, the equation~\eqref{RESint} is uniquely solvable in~$H^{r-1}(\R)^2.$ 
As a special feature of the multiphase Muskat problem, 
the underlying Rellich identities used to prove  this result in the case when the viscosities are ordered,  that is~${a_\mu^1a_\mu^2>0}$ (in this case $\cU_r$ and $\cO_r$ coincide), 
  do not provide useful estimates when ${a_\mu^1a_\mu^2<0}$ and  in this situation a different Neumann series type argument is employed. 

These facts enable us to reformulate the multiphase Muskat problem~\eqref{PB} as the nonlinear and nonlocal autonomous  evolution equation for $X$ of the form
\begin{align}\label{NNEP*}
\frac{dX(t)}{dt}=\Phi(X(t)),\quad t\geq0,\qquad X(0)=X_0:=(f_0,h_0),
\end{align}
where $\Phi:=(\Phi_1,\Phi_2):\cU_r\subset H^{s}(\R)^2\to H^{s-1}(\R)^2$ is a smooth map, see Section~\ref{Sec:3}.
Therefore we will also refer only to $X$ as being the solution to \eqref{PB}.
The well-known Rayleigh-Taylor condition, see~\eqref{RT}, which is a sign restriction on the jump of the pressure gradients in normal direction at each interface,
is reexpressed in our context by the relations
\begin{equation*}
\Theta_1+a_\mu^1\Phi_1(X)<0 \qquad\text{and}\qquad \Theta_2+a_\mu^2\Phi_2(X)<0,
\end{equation*}
see \eqref{RTRef}.
In the stable regime considered herein, cf. \eqref{stre}, the constants $\Theta_i$, $i=1,\,2$, are both negative, and the Rayleigh-Taylor condition identifies the open subset 
\begin{equation*} 
\cV_r:=\big\{X\in\cU_r\,:\, \text{$\Theta_1+a_\mu^1\Phi_1(X)<0 $ and $\Theta_2+a_\mu^2\Phi_2(X)<0$}\big\}
\end{equation*} 
of $\cU_r$. 
The core of our analysis in Section~\ref{Sec:3} is devoted to showing that the evolution problem~\eqref{NNEP*} is of parabolic type in $\cV_r$.
Having established this property, we can use the abstract parabolic theory from \cite{L95} to prove our main result.

\begin{thm}\label{MT1}
Let $r\in(3/2,2)$.
Given $X_0\in\cV_r$, the multiphase Muskat problem  \eqref{PB} has a unique maximal solution~${X\coloneqq{} X(\,\cdot\,; X_0)}$  such that
\[ X\in {\rm C}([0,T^+),\cV_r)\cap {\rm C}^1([0,T^+),  H^{r-1}(\R)^2)\] 
and the associated velocities and pressures satisfy for each $0\leq t< T^+$
\begin{itemize}
\item $v_i(t)\in {\rm BUC}(\0_i(t))\cap {\rm C}^\infty(\0_i(t)),\, p_i(t)\in {\rm UC}^1(\0_i(t))\cap {\rm C}^\infty(\0_i(t))$ for $1\leq i\leq 3,$\\[-1ex]
\item $[x\mapsto v_i(t,x,c_\infty+f(t,x))]\in H^{r-1}(\R)^2$ for  $1\leq i\leq 2,$\\[-1ex]
\item $[x\mapsto v_i(t,x,h(t,x))]\in H^{r-1}(\R)^2 $ for $ 2\leq i\leq 3,$
\end{itemize}
where $T^+=T^+(X_0)\in(0,\infty]$ denotes the maximal time of existence.\footnote{Given $\0\subset\R^2$, we denote by ${\rm BUC}(\0)$ the Banach space of bounded and uniformly continuous functions
 and~${{\rm UC}^1(\0)}$  is the set of functions with uniformly continuous first order derivatives.}
Moreover, we have:
\begin{itemize}
\item[(i)] The solution depends continuously on the initial data;
\item[(ii)]  $X\in {\rm C}^\infty((0,T^+ )\times\R,\R^2)\cap {\rm C}^\infty ((0,T^+),  H^k(\R)^2)$ for each $k\in\N$.
\end{itemize}
 \end{thm}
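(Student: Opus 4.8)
The plan is to reduce the statement, via the analysis of Sections~\ref{Sec:2}--\ref{Sec:3}, to the abstract theory of fully nonlinear parabolic equations of \cite{L95}. By Theorem~\ref{T:1} and the invertibility result Theorem~\ref{Tinvert}, the velocity field is at each time instant uniquely determined by $X=(f,h)\in\cU_r$ through \eqref{RESint}, and this identification recasts \eqref{PB} equivalently as the autonomous evolution problem \eqref{NNEP*}, with $\Phi$ a smooth map from the open set $\cU_r\subset H^r(\R)^2$ into $H^{r-1}(\R)^2$. It therefore suffices to establish the asserted properties for \eqref{NNEP*} on $\cV_r$. Writing $E_0:=H^{r-1}(\R)^2$ and $E_1:=H^r(\R)^2$, the key input from Section~\ref{Sec:3} is that for each $X_0\in\cV_r$ the Fréchet derivative $\partial\Phi(X_0)$, viewed as an unbounded operator in $E_0$ with dense domain $E_1$, generates an analytic semigroup; since $\Phi$ is smooth, $X\mapsto\partial\Phi(X)$ is moreover continuous from $\cV_r$ into $\kL(E_1,E_0)$, so that $\partial\Phi$ is uniformly sectorial near $X_0$. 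Hence the hypotheses required to apply the existence, uniqueness, and continuous dependence results for fully nonlinear parabolic problems in \cite{L95} are in force.

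Given $X_0\in\cV_r$, the abstract theory of \cite{L95} then provides some $T>0$ and a unique solution $X\in{\rm C}([0,T],\cV_r)\cap{\rm C}^1([0,T],E_0)$ of \eqref{NNEP*}; a standard continuation argument yields the maximal solution on a maximal interval $[0,T^+)$, which remains in $\cV_r$ by maximality, while the continuous dependence part of the abstract theory gives assertion~(i). Tracing the equivalence backwards and using the layer-potential representation of the velocities and pressures from (the proof of) Theorem~\ref{T:1}, together with the classical interior smoothness and the boundary regularity of (the adjoint of) the double layer potential for Laplace's equation associated to the $H^r$-hypersurface $\Gamma_h\cup\Gamma_f^{c_\infty}$, and with the decay built into \eqref{eq:S3}, one obtains the claimed smoothness and decay of $(v_i,p_i)$ on $\0_i(t)$ and the fact that the traces of the velocities on the interfaces lie in $H^{r-1}(\R)^2$.

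To prove the parabolic smoothing property~(ii) I would use the parameter trick, as in the two-phase analysis \cite{MBV18, MBV20}. For the temporal regularity, exploit that \eqref{NNEP*} is autonomous: for $\lambda$ close to $1$ the rescaled curve $X_\lambda(t):=X(\lambda t)$ solves $\dot Y=\lambda\Phi(Y)$ with $Y(0)=X_0$, a problem to which the abstract theory applies uniformly in $\lambda$ (its linearization being $\lambda\,\partial\Phi$) and in which $\lambda$ enters smoothly; hence $\lambda\mapsto X_\lambda$ is smooth into ${\rm C}([\varepsilon,T'],E_1)$ for $0<\varepsilon<T'<T^+$, and differentiating $X_\lambda(t)=X(\lambda t)$ repeatedly at $\lambda=1$ shows $X\in{\rm C}^\infty((0,T^+),E_1)$. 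For the spatial regularity, use that \eqref{NNEP*} is invariant under the translations $x\mapsto x+\xi$: the shifted curve $X_\xi(t,\cdot):=X(t,\cdot+\xi)$ solves \eqref{NNEP*} with datum $X_0(\cdot+\xi)$, so the same parameter trick in the variable $\xi$ gives that $\xi\mapsto X_\xi$ is smooth into ${\rm C}([\varepsilon,T'],E_1)$, whence $\partial_x^kX(t)\in H^r(\R)^2$ for all $k\in\N$ and all $t\in(0,T^+)$. Since $\Phi$ is smooth on $\cU_s$ and its linearization at points of the corresponding open set generates an analytic semigroup on $(H^{s-1}(\R)^2,H^s(\R)^2)$ for every $s>3/2$ --- the arguments of Sections~\ref{Sec:2}--\ref{Sec:3} being insensitive to the choice of $s$, the extra terms arising as $s$ grows being of lower order --- this procedure may be iterated through all regularity levels, giving $X(t)\in H^k(\R)^2$ for every $k\in\N$ and $t\in(0,T^+)$, and, together with the temporal smoothness and the Sobolev embedding, $X\in{\rm C}^\infty((0,T^+)\times\R,\R^2)\cap{\rm C}^\infty((0,T^+),H^k(\R)^2)$ for every $k$.

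Since the genuine difficulties --- the solvability of \eqref{RESint} and the parabolicity of \eqref{NNEP*} on $\cV_r$ --- are dealt with in Sections~\ref{Sec:2}--\ref{Sec:3} and are here taken as given, the only point in the present proof that requires real care is the clean implementation of the parameter trick: one must choose the open domains of $\Phi$ and of the $\lambda$- and $\xi$-parametrized problems consistently along the bootstrap, keep in mind that smoothness in the parameters is only available for positive times, and --- at each step of the spatial bootstrap --- re-verify that the linearization remains the generator of an analytic semigroup on the pair $(H^{s-1}(\R)^2,H^s(\R)^2)$. This last point is exactly where the structure exhibited in Section~\ref{Sec:3} is used once more: the negativity of the principal symbol forced by the Rayleigh--Taylor condition, and the fact that the positive distance between the two interfaces renders the off-diagonal coupling contributions of lower order.
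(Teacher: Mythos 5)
Your overall plan mirrors the paper's: establish smoothness of $\Phi$ and the generator property $-\partial\Phi(X)\in\mathcal{H}(H^{r}(\R)^2,H^{r-1}(\R)^2)$ for $X\in\cV_r$, apply the fully nonlinear parabolic theory of \cite{L95}, and obtain the smoothing assertion by a parameter trick. There is, however, a genuine gap in the uniqueness step. You assert that \cite{L95} directly provides a unique solution in ${\rm C}([0,T],\cV_r)\cap{\rm C}^1([0,T],H^{r-1}(\R)^2)$. What \cite[Theorem~8.1.1]{L95} actually yields is a solution lying additionally in ${\rm C}^{\alpha}_{\alpha}((0,T],H^r(\R)^2)$, and uniqueness is guaranteed only within $\bigcup_{\beta\in(0,1)}{\rm C}^{\beta}_{\beta}((0,T],H^r(\R)^2)\cap {\rm C}([0,T],\cV_r)\cap{\rm C}^1([0,T],H^{r-1}(\R)^2)$. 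Since Theorem~\ref{MT1} claims uniqueness in the larger, unweighted class, this requires a separate argument, which your proposal omits. The paper supplies it by interpolation: for any competitor $X\in{\rm C}([0,T],\cV_r)\cap{\rm C}^1([0,T],H^{r-1}(\R)^2)$ and any $r'\in(3/2,r)$, with $\alpha:=r-r'$, one has $\|X(t_1)-X(t_2)\|_{H^{r'}}\leq\|X(t_1)-X(t_2)\|_{H^{r-1}}^{\alpha}\|X(t_1)-X(t_2)\|_{H^{r}}^{1-\alpha}\leq C|t_1-t_2|^\alpha$, whence $X\in{\rm C}^\alpha_\alpha((0,T],H^{r'}(\R)^2)$; uniqueness then follows by invoking \cite[Theorem 8.1.1]{L95} at the lower Sobolev level $r'$, where $\Phi\in{\rm C}^\infty(\cV_{r'},H^{r'-1}(\R)^2)$. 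Without this step the asserted uniqueness class is not justified.

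A secondary flaw concerns the spatial part of the parameter trick. Shifting only the solution, $X_\xi(t,\cdot):=X(t,\cdot+\xi)$, yields the same equation but with initial datum $X_0(\cdot+\xi)$, and $\xi\mapsto X_0(\cdot+\xi)$ is merely continuous, not differentiable, as an $H^r$-valued curve when $X_0$ has no excess regularity; hence the abstract smooth-dependence results do not directly give smoothness in $\xi$ of $X_\xi$ into ${\rm C}([\e,T'],H^r(\R)^2)$. The correct implementation combines the two symmetries, say $u_{\lambda,\xi}(t,x):=X(\lambda t,x+t\xi)$, so that the initial datum $u_{\lambda,\xi}(0)=X_0$ is fixed and the parameters enter only the equation $\partial_t u=\lambda\Phi(u)+\xi\,\partial_x u$; the perturbation $\xi\,\partial_x\in\kL(H^r(\R)^2,H^{r-1}(\R)^2)$ is small for $|\xi|$ small, so the generator property persists, and differentiating at $(\lambda,\xi)=(1,0)$ gives simultaneously $t\partial_t X,\ t\partial_x X\in H^r(\R)^2$ for $t>0$, after which one bootstraps. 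Your remaining caveats about choosing domains consistently and re-verifying the generator property at each level are appropriate, but the translation-only version as stated would not produce the required parameter smoothness.
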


\section{Unique solvability of the fixed time problem}\label{Sec:2}

The main goal of this section is to prove that  $X=(f,h)$ identifies at each time  instant the velocity and the pressure in the bulk, and to provide  an explicit formula for the velocity.
More precisely, we  address the unique solvability of the system \eqref{eq:S1}-\eqref{eq:S3}, see Theorem~\ref{T:1} below, where we show that the velocity 
can be expressed by using singular integrals with kernels depending $X$ and with a density vector  $\oo$ which is the unique solution to \eqref{RESint}.
We first address the unique solvability of \eqref{RESint}, see Theorem~\ref{Tinvert} below.

Given $u\in W^1_\infty(\R)$, we set
\begin{equation}\label{notationa}
\begin{aligned}
\bA(u)[\oo](x)&\coloneqq \frac{1}{\pi}\PV\int_\R\frac{su'(x)-\delta_{[x,s]}u}{s^2+(\delta_{[x,s]}u)^2}\oo(x-s)\, ds,\\
\bB(u)[\oo](x)&\coloneqq\frac{1}{\pi}\PV\int_\R\frac{s+u'(x)\delta_{[x,s]}u}{s^2+(\delta_{[x,s]}u)^2}\oo(x-s)\, ds,
\end{aligned}
\end{equation}
where, as usual, $\PV$ is the principal value.
 
Moreover, given  $X:=(f,h)\in W^1_\infty(\R)^2 $ with the property that~${\inf(c_\infty+f-h)>0},$ we define
\begin{equation*}
\begin{aligned}
S(X)[\oo](x)&\coloneqq \frac{1}{\pi}\int_\R\frac{sf'(x)-\delta_{[x,s]}X}{s^2+(\delta_{[x,s]}X)^2}\oo(x-s)\, ds,\\
S'(X)[\oo](x)&\coloneqq\frac{1}{\pi}\int_\R\frac{sh'(x)-\delta'_{[x,s]}X}{s^2+(\delta'_{[x,s]}X)^2}\oo(x-s)\, ds,\\
T(X)[\oo](x)&:=\frac{1}{\pi}\int_\R\frac{s+f'(x)\delta_{[x,s]}X}{s^2+(\delta_{[x,s]}X)^2}\oo(x-s)\, ds,\\
T'(X)[\oo](x)&:=\frac{1}{\pi}\int_\R\frac{s+h'(x)\delta'_{[x,s]}X}{s^2+(\delta'_{[x,s]}X)^2}\oo(x-s)\, ds,
\end{aligned}
\end{equation*}
In the above definitions  we have used the shorthand notation 
\begin{equation}\label{notat}
\begin{aligned}
&\delta_{[x,s]}u\coloneqq  u(x)-u(x-s),\\[1ex]
& \delta_{[x,s]}X\coloneqq  c_\infty+f(x)-h(x-s),\\[1ex]
& \delta'_{[x,s]}X\coloneqq  h(x)-c_\infty-f(x-s).
\end{aligned}
\end{equation}
The operators $\bA(u)$ and $\bB(u)$ have been introduced in the context of the Muskat problem in \cite{MBV18}.
In particular, we have $\bA(0)=0 $ and $\bB(0)=H$, where $H$ is the Hilbert transform. 
Let $r\in(3/2,2)$ be fixed and let $\cO_r$ be the open subset of $H^{r}(\R)^2$ defined in Introduction.
As shown in \cite{MBV18, AM21x}, it holds that   
\begin{equation}\label{regAB}
\bA(u),\, \bB(u)\in\kL(L_2(\R))\cap \kL(H^{r-1}(\R)),\qquad u\in H^{r}(\R),
\end{equation}
 and
 \begin{equation}\label{regAB'}
[u\mapsto\bA(u)],\, [u\mapsto \bB(u)]\in {\rm C}^{1-}(W^1_\infty(\R),\kL(L_2(\R))).
\end{equation}
Moreover, it is proven in \cite[Theorem 3.5]{MBV18} and \cite[Theorem 5]{AM21x}   that
\begin{equation}\label{isom}
\begin{aligned}
&\lambda-\bA(u) \in{\rm Isom }(L_2(\R)), \qquad u\in {\rm C}^1(\R),\\[1ex]
&\lambda-\bA(u)\in {\rm Isom }(H^{r-1}(\R)),\qquad u\in H^{r}(\R)
\end{aligned}
\end{equation}
for all  $\lambda\in\R$ with $|\lambda|\geq1$. 
Given  Banach spaces $\mathbb{E},\, \mathbb{F}$, we denote by ${\rm Isom }(\mathbb{E},\mathbb{F})$ the open subset of~$\mathcal{L}(\mathbb{E}, \mathbb{F})$ that consists of  invertible operators.

With respect to the operators $S(X)$, $S'(X)$, $T(X)$, and $T'(X)$  we infer from the arguments in the proof of \cite[Lemma~2.2, Lemma~2.3 and Corollary~2.4]{BM21x} that  
\begin{equation}\label{SSS'}
S,\, S', \, T,\ T'\in {\rm C}^{1-}(\{X\in W^1_\infty(\R)^2\,:\, \inf(c_\infty+f-h)>0\},\kL(L_2(\R))).
\end{equation}
Moreover, it is shown in  \cite[Lemma~2.5]{BM21x} that
\begin{equation}\label{SSS}
S(X),\, S'(X),\, T(X),\, T'(X)\in\kL(L_2(\R), H^{1}(\R)),\quad X\in\cO_r.
\end{equation}

Given  $X\in\cO_r$, let  
\begin{equation*}
\A(X):=(\A_1(X),\,\A_2(X)) \qquad \text{and} \qquad \B(X):=(\B_1(X),\,\B_2(X))
\end{equation*}
 denote the linear operators given by
\begin{equation}\label{OpAB}
\begin{aligned}
\A_1(X)[\oo]:=\bA(f)[\oo_1]+S(X)[\oo_2],\qquad \A_2(X)[\oo](x):=S'(X)[\oo_1]+\bA(h)[\oo_2],\\[1ex]
\B_1(X)[\oo]:=\bB(f)[\oo_1]+T(X)[\oo_2],\qquad \B_2(X)[\oo](x):=T'(X)[\oo_1]+\bB(h)[\oo_2],\\[1ex]
\end{aligned}
\end{equation}  
for $\oo\coloneqq(\oo_1,\oo_2)\in L_2(\R)^2$, or  equivalently
\begin{equation}\label{OpAB'}
\A(X):=
\begin{pmatrix}
\bA(f)& S(X)\\[1ex]
S'(X)&\bA(h)
\end{pmatrix}
\qquad \text{and} \qquad
\B(X):=
\begin{pmatrix}
\bB(f)& T(X)\\[1ex]
T'(X)&\bB(h)
\end{pmatrix}.
\end{equation} 
It directly follows from \eqref{regAB} and \eqref{SSS} that 
\begin{equation}\label{regcab}
\A(X),\, \B(X)\in\kL(L_2(\R)^2)\cap \kL(H^{r-1}(\R)^2),\qquad X\in\cO_r,
\end{equation}
while 
\eqref{regAB'} and \eqref{SSS'} imply that
\begin{equation}\label{regcab'}
[X\mapsto \A(X)],\, [X\mapsto \B(X)]\in{\rm C^{1-}}(\{X\in W^1_\infty(\R)^2\,:\, \inf(c_\infty+f-h)\},\kL(L_2(\R)^2)).
\end{equation}

The main step  in the proof of  Theorem~\ref{T:1} below is the following result.

\begin{thm}\label{Tinvert}
Let $r\in(3/2,2)$ and $A\coloneqq{\rm diag\,}(a_1,a_2)\in\R^{2\times 2}$ with $\max\{|a_1|,\, |a_2|\}< 1$ be given.
Then, $\cU_r:=\cU\cap H^{r}(\R)^2$, where 
\begin{equation*}
\cU\coloneqq\{X\in {\rm C}^1(\R)^2\,:\, \text{$\inf(c_\infty+f-h)>0$ and $1-T_A(X)\in{\rm Isom}(L_2(\R))$}\}
\end{equation*}
and
\[
T_A(X):=a_1a_2(1-a_1\bA(f))^{-1}S(X)(1-a_2\bA(h))^{-1}S'(X),
\] 
is a nonempty  open subset of $\cO_r$ and $1-A\A(X)\in{\rm Isom}(H^{r-1}(\R)^2)$ for all~${X\in\cU_r}$.
Moreover, it holds that: 
\begin{itemize}
\item[(a)] If $a_1a_2\geq 0$, then $\cU_r=\cO_r$; 
\item[(b)]  If $a_1a_2< 0$, there exists a constant $\sigma=\sigma(A)>0$ such that 
 $$\{X\in\cO_r\,:\, \|X\|_{W^1_\infty}<\sigma\} \subset\cU_r.$$
\end{itemize}
In particular,  $\cU_r$ is an unbounded subset of $H^{r}(\R)^2.$
\end{thm}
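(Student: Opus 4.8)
The plan is to treat the cases $a_1a_2\geq0$ and $a_1a_2<0$ separately, as the structure of the theorem suggests, and to deduce the $H^{r-1}$-statement from the $L_2$-statement by a standard regularity bootstrap. The starting point in both cases is to rewrite $1-A\A(X)$ using the block-matrix form \eqref{OpAB'}. Since $\max\{|a_1|,|a_2|\}<1$, the diagonal blocks $1-a_1\bA(f)$ and $1-a_2\bA(h)$ are isomorphisms of $L_2(\R)$ (and of $H^{r-1}(\R)$) by \eqref{isom}, so one can factor
\begin{equation*}
1-A\A(X)=
\begin{pmatrix}
1-a_1\bA(f)& 0\\
0& 1-a_2\bA(h)
\end{pmatrix}
\begin{pmatrix}
1& -a_1(1-a_1\bA(f))^{-1}S(X)\\
-a_2(1-a_2\bA(h))^{-1}S'(X)& 1
\end{pmatrix}.
\end{equation*}
By the Schur complement, invertibility of $1-A\A(X)$ on $L_2(\R)^2$ is then equivalent to invertibility of $1-T_A(X)$ on $L_2(\R)$, which is precisely the defining condition of $\cU$. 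This also immediately gives that $\cU$ (hence $\cU_r$) is \emph{open} in ${\rm C}^1(\R)^2$ (resp. in $\cO_r$), because $X\mapsto(1-a_1\bA(f))^{-1}S(X)(1-a_2\bA(h))^{-1}S'(X)$ is continuous into $\kL(L_2(\R))$ by \eqref{regAB'}, \eqref{isom}, \eqref{SSS'}, and ${\rm Isom}(L_2(\R))$ is open.

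\textbf{The ordered case (a).} When $a_1a_2\geq0$ I would show $1-T_A(X)\in{\rm Isom}(L_2(\R))$ for \emph{every} $X$ with $\inf(c_\infty+f-h)>0$, which yields $\cU_r=\cO_r$. The natural route is a Rellich-type energy identity: one tests the equation $(1-A\A(X))[\oo]=0$ against a suitably weighted version of $\oo$ (using the companion operators $\B(X)$, which encode the conjugate harmonic data, as in \cite{MBV18, BM21x}), exploiting that $\bA(u)$, $\bB(u)$ arise as boundary traces of a single-layer-type potential so that $\|\bB(u)[\phi]\|_{L_2}^2-\|\bA(u)[\phi]\|_{L_2}^2=\|\phi\|_{L_2}^2$-style identities hold, together with the analogous cross-identities for $S,S',T,T'$. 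The sign hypothesis $a_1a_2\geq0$ is exactly what makes the resulting quadratic form coercive, giving an a priori bound $\|\oo\|_{L_2}\leq C\|(1-A\A(X))[\oo]\|_{L_2}$ with $C$ independent of $X$; combined with a continuity/homotopy argument in $A$ (deforming $(a_1,a_2)$ to $(0,0)$, where the operator is the identity) and the method of continuity, this gives surjectivity as well. This is the step I expect to be the \textbf{main obstacle}: assembling the correct Rellich identity for the coupled system and tracking that the coupling terms have the favorable sign precisely when $a_1a_2\geq0$.

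\textbf{The non-ordered case (b) and conclusion.} When $a_1a_2<0$ the Rellich identity no longer gives a useful sign, so instead I would use a Neumann series: $T_A(X)=a_1a_2(1-a_1\bA(f))^{-1}S(X)(1-a_2\bA(h))^{-1}S'(X)$, and by \eqref{isom} the inverses are bounded on $L_2(\R)$ by a constant depending only on $a_1,a_2$, while $S(X),S'(X)$ are small in $\kL(L_2(\R))$ when $\|X\|_{W^1_\infty}$ is small — indeed $S(0)=S'(0)=0$ and $S,S'\in{\rm C}^{1-}$ by \eqref{SSS'}, so $\|S(X)\|_{\kL(L_2)}+\|S'(X)\|_{\kL(L_2)}\leq C\|X\|_{W^1_\infty}$. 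Hence there is $\sigma=\sigma(A)>0$ with $\|T_A(X)\|_{\kL(L_2)}<1$ whenever $\|X\|_{W^1_\infty}<\sigma$, so $1-T_A(X)$ is invertible via $\sum T_A(X)^n$; this proves (b), and in particular $\cU_r\neq\emptyset$. For the $H^{r-1}$-statement, once $1-T_A(X)\in{\rm Isom}(L_2(\R))$ one upgrades to ${\rm Isom}(H^{r-1}(\R)^2)$ by noting that $\A(X)$ maps $H^{r-1}(\R)^2$ into itself by \eqref{regcab} and in fact $S(X),S'(X)$ smooth by one order \eqref{SSS}, so $T_A(X)-(\text{compact-type smoothing remainder})$ splits as invertible-plus-smoothing; injectivity on $H^{r-1}$ follows from injectivity on the larger space $L_2$, and surjectivity follows because given $g\in H^{r-1}(\R)^2$ the $L_2$-solution $\oo$ satisfies a fixed-point identity $\oo=A\A(X)[\oo]+g$ whose right-hand side lies in $H^{r-1}$ once $\oo$ does, bootstrapping from $L_2$ to $H^{r-1}$ using the mapping properties \eqref{regcab}, \eqref{isom}. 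Finally, unboundedness of $\cU_r$ in $H^{r}(\R)^2$ is immediate: in case (a) it equals $\cO_r$ which is unbounded, and in case (b) the ball $\{X\in\cO_r:\|X\|_{W^1_\infty}<\sigma\}\subset\cU_r$ already contains $H^r$-functions of arbitrarily large $H^r$-norm (concentrate oscillations at high frequency while keeping the $W^1_\infty$-norm, hence the sup-norm of $X$ and $X'$, below $\sigma$).
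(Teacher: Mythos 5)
Your overall architecture — Schur-complement factorization to reduce to $1-T_A(X)$, a Rellich estimate for case (a), a Neumann series for case (b), then a bootstrap from $L_2$ to $H^{r-1}$ — matches the paper's proof, and the scaling argument you give for unboundedness is correct. However, there is a concrete error in case (b): you claim $S(0)=S'(0)=0$, but this is false. Unwinding the definitions at $X=0$ gives $S'(0)[\oo]=\varphi*\oo$ and $S(0)=-S'(0)$ with $\varphi(s)=\frac{1}{\pi}\frac{c_\infty}{s^2+c_\infty^2}$, so $\|S(0)\|_{\kL(L_2)}=\|S'(0)\|_{\kL(L_2)}=\sup_\xi e^{-c_\infty|\xi|}=1$. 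Thus $S(X)$ does \emph{not} become small as $\|X\|_{W^1_\infty}\to 0$, and your stated reason for the Neumann series to converge fails. What actually makes it work (and this is the point in the paper) is that $\bA(0)=0$, so $(1-a_i\bA(0))^{-1}=\mathrm{id}$, hence $\|T_A(0)\|_{\kL(L_2)}\leq|a_1a_2|\cdot\|S(0)\|\,\|S'(0)\|\leq|a_1a_2|<1$ — this is exactly where the hypothesis $\max\{|a_1|,|a_2|\}<1$ enters. The $\sigma$-ball then comes from the Lipschitz continuity of $X\mapsto T_A(X)$ and the openness of the set of invertible operators, not from the smallness of $S,S'$.

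For case (a), you correctly identify the Rellich argument as the main obstacle, but the sketch you give is too coarse. The paper does not use an identity of the form $\|\bB(u)[\phi]\|_{L_2}^2-\|\bA(u)[\phi]\|_{L_2}^2=\|\phi\|_{L_2}^2$ per interface; instead it applies Stokes' theorem to the vector field $(2v^1v^2,(v^2)^2-(v^1)^2)$ over each phase $\0_i$, which yields three \emph{coupled} integral identities (the system \eqref{For1}) in which the interaction operators $S,S',T,T'$ appear through the traces $\A_i(X)[\oo]\pm\oo_i$ and $\B_i(X)[\oo]$. The decisive step, which there is no indication you have, is to take the specific linear combination of those three identities with weights $(1+a_1)$, $-(1-a_1)$, $-(1-a_1)(1-a_2)/(1+a_2)$ and then multiply by ${\rm sign}(a_1)={\rm sign}(a_2)$; only with this choice, and only when $a_1a_2>0$, do the mixed terms organize into a coercive form giving $\|(1-A\A(X))[\oo]\|_2\geq Cm(A)\|\oo\|_2$ uniformly in $A$, after which the method of continuity (deforming $A$ toward $0$, as you suggest) finishes the $L_2$-invertibility. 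Without that algebraic combination the proposal does not close.
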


Before proceeding with the proof we add some comments.

\begin{rem}\label{R:1} \phantom{a}

\begin{itemize}
\item[(i)]
The operator $\bA(f)$ is the adjoint of the double layer potential for Laplace's equation associated to  the hypersurface $\Gamma_f^{c_\infty}$ 
and the operator $\A(X)$ its generalization to the hypersurface~${\Gamma_h\cup\Gamma_{f}^{c_\infty}}.$
As already mentioned, $\lambda-\bA(f)$ (and $\lambda-\A(X)$,  cf. Corollary~\ref{Cor1} below) is an $L_2$-isomorphism for all $\lambda\in\R$ with $|\lambda|\geq1$.
 This property is essential in the study of the two-phase Muskat problem, see \cite{MBV18, MBV20, CCG11}, but in the context of the multiphase Muskat problem we need to consider 
 the invertibility of the operator~${1-A_\mu\A(X)}$,  see Theorem~\ref{T:1}, which is not a spectral problem for $\A(X)$.

On the basis of the Rellich identities \eqref{For1} we  show in Theorem~\ref{Tinvert} that~${1-A\A(X)}$ is an isomorphism whenever the entries of the matrix $A$ have the same sign. 
If $a_1a_2<0$, we can invert this operator only under a smallness assumption on the $W^1_\infty-$norm of $X$, and it is not clear to us whether this smallness assumption can be dropped. 
In fact~${1-A\A(X)}$  is not invertible if $A\coloneqq{\rm diag\,}(1,-1)$ and $X=0$, see~(ii) . 
\item[(ii)] Let $A\coloneqq{\rm diag\,}(1,-1)$. Then $1-A\A(0)$  is not invertible in~$\kL(L_2(\R)^2)$ (or~${\kL(H^{r-1}(\R)^2)}$).

Indeed, let $F=(F_1,0)\in H^{r-1}(\R)^2$ be chosen such that~${\cF F_1\in {\rm C}^{\infty}_0(\R)}$ is a function which satisfies $\cF F_1(\xi)=1$ for all $|\xi|\leq 1 $ (where $\cF$ denotes the Fourier transform).
If~$\oo\coloneqq(\oo_1,\oo_2)\in L_2(\R)^2$ satisfies $(1-A\A(0))[\oo]=F$, we arrive in virtue of $\bA(0)=0$ and of $S'(0)=-S(0) $ with $S'(0)[\oo_2]=\varphi*\oo_2$, where
\[
\varphi(s)\coloneqq\frac{1}{\pi}\frac{c_\infty}{s^2+c_\infty^2},\qquad s\in\R,
\]
 at the system
 \[
\left.
\begin{array}{llll}
\oo_1-\varphi*\oo_2=F_1,\\[1ex]
\oo_2-\varphi*\oo_1=0.
\end{array}
\right\} 
 \]
 Applying the Fourier transform on both equations and using the convolution theorem,  we get 
 \begin{equation}\label{Formm}
\cF\oo_1=\frac{\cF F_1}{1-(\sqrt{2\pi}\cF\varphi)^2}. 
 \end{equation}
 The function $\cF\varphi$ can be computed explicitly. Indeed, let $\psi(s):=e^{-c_\infty|s|}$, $s\in\R$.
 It then holds 
 \[
\sqrt{2\pi}\cF\psi(\xi)=\int_\R e^{-c_\infty|s|}e^{-is\xi}\, ds=\frac{e^{-(c_\infty+i\xi)s}}{-(c_\infty+i\xi)}\Big|_0^\infty
+\frac{e^{(c_\infty-i\xi)s}}{(c_\infty-i\xi)}\Big|_{-\infty}^0=2\pi\varphi(\xi)
 \]
 for $\xi\in\R$, hence $\sqrt{2\pi}\cF\varphi=\psi$.
 Because $(1-\psi^2(\xi))/|\xi|\to 2c_\infty$ for $\xi\to0$, we conclude that the function $\cF\oo_1$ defined in \eqref{Formm} does not belong to $L_2(\R)$.
  Therefore,~${1-A\A(0)}$ is not surjective, and this proves the claim. 
\end{itemize}
\end{rem}

We are now in a position to prove Theorem~\ref{Tinvert}.
\begin{proof}[Proof of Theorem~\ref{Tinvert}] 
Let $X\in\cO_r$. Recalling~\eqref{OpAB'}, the invertibility of the operator~${1-A\A(X)}$ in~${\kL(H^{r-1}(\R)^2)}$ is equivalent to the unique solvability of the system
\begin{equation}\label{SYSt}
\left.
\begin{aligned}
&\oo_1-a_1\bA(f)[\oo_1]-a_1S(X)[\oo_2]=F_1,\\[1ex]
&\oo_2-a_2S'(X)[\oo_1]-a_2\bA(h)[\oo_2]=F_2
\end{aligned}
\right\}
\end{equation}
in $H^{r-1}(\R)^2$ for each  $F\coloneqq(F_1,F_2)\in H^{r-1}(\R)^2$.
Since
$1-a_1\bA(f) $ and~${1-a_2\bA(h)}$ are invertible   in $ \kL(H^{r-1}(\R))$, cf. $\eqref{isom}_2$,
this property together with \eqref{SSS}, enables us to conclude that the system \eqref{SYSt} is equivalent to the following equation for $\oo_1$ in $H^{r-1}(\R)$:
   \begin{equation}\label{Eqq}
   (1-T_A(X))[\oo_1]=(1-a_1\bA(f))^{-1}[F_1+a_1S(X)(1-a_2\bA(h))^{-1}[F_2]].
   \end{equation}
Hence, if $X\in\cU_r$, then \eqref{Eqq} has a unique solution $\oo_1\in L_2(\R).$
 Additionally, since the right side of \eqref{Eqq} and $T_A(X)[\oo_1]$  belong to $H^{r-1}(\R)$, cf.  $\eqref{isom}_2$  and \eqref{SSS},
 it follows from~\eqref{Eqq} that actually $\oo_1\in H^{r-1}(\R)$. 
 Hence, $1-A\A(X)\in{\rm Isom}(H^{r-1}(\R)^2)$ for all $X\in\cU_r$.  
 
That $\cU_r$ is an open subset of $\cO_r$ is a direct consequence of  the property
\begin{equation}\label{reggg}
[X\mapsto T_A(X)]\in{\rm C}^{1-}(\{X\in {\rm C}^1(\R)^2\,:\, \inf(c_\infty+f-h)>0\},\kL(L_2(\R))).
\end{equation}
The property \eqref{reggg}  is a direct consequence of \eqref{regAB'}, $\eqref{isom}_1$,  \eqref{SSS'}, and of the smoothness of the mapping $[T\mapsto T^{-1}]:{\rm Isom}(L_2(\R))\to {\rm Isom}(L_2(\R)).$  

In order to show that $\cU_r$ is not empty, we observe that  $T_A(0)=a_1a_2S(0)S'(0)$. 
Recalling that~${S'(0)=-S(0)}$ and $S'(0)[\oo_2]=\varphi*\oo_2,$  see Remark~\ref{R:1}~(ii),
we have
 \begin{align*}
  \|S(0) \|_{\kL(L_2(\R))}=\|S'(0) \|_{\kL(L_2(\R))}\leq 1,
 \end{align*}
hence $\|T_A(0)\|_{\kL(L_2(\R))}\leq |a_1a_2|<1.$ 
 Consequently $0\in\cU_r$ and, recalling \eqref{reggg}, we may conclude there exits  a constant $\sigma=\sigma(A)>0$ such that 
 $\{X\in\cO_r\,:\, \|X\|_{W^1_\infty}<\sigma\} \subset\cU_r.$\medskip

It remains to show that in the case   (a), that is  when  $a_1a_2\geq0,$ we have $\cU_r=\cO_r$. 
If $a_1a_2=0,$ then $T_A=0$, hence $\cU_r=\cO_r$.
The proof in the case when $a_1a_2>0$ is more involved.
The crucial step is to prove that~$1-A\A(X)\in{\rm Isom}(L_2(\R)^2)$ for all $X\in\cO_r$.
To this end we  next view~$A$ as a parameter matrix, and we prove there exists a constant~${C=C(\|X'\|_\infty)}>0$ such that 
\begin{equation}\label{SYSE}
\|(1-A\A(X))[\oo]\|_2\geq C m(A)\|\oo\|_2\qquad\text{for all $\oo\in L_2(\R)^2$,}
\end{equation}
where
\begin{equation}\label{mA}
m(A):=\min\Big\{\frac{1+a_1}{|a_1|},\, \frac{1-a_2}{|a_2|},\, |a_1|(1-a_1),\, |a_2|(1+a_2) \Big\}>0.
\end{equation}
Having established \eqref{SYSE}, we note that if $a_1 $ and $a_2$ are small, then ${1-A\A(X)\in\kL(L_2(\R)^2)}$ is invertible, and the method of continuity, 
cf. \cite[Proposition I.1.1.1]{Am95}, together with \eqref{SYSE} implies that
 $1-A\A(X)\in{\rm Isom}(L_2(\R)^2)$ for all $A\coloneqq{\rm diag\,}(a_1,a_2)$ with $\max\{|a_1|,\, |a_2|\}< 1$.
 If~${F\in H^{r-1}(\R)^2},$ we may use the fact that $S(X)[\oo_2],\, S'(X)[\oo_1]\in H^{1}(\R)\subset H^{r-1}(\R)$  to conclude from \eqref{SYSt}, in view of~$\eqref{isom}_2$, 
  that~${\oo:=(1-A\A(X))^{-1}[F]}$ belongs to~${H^{r-1}(\R)^2}$. 
This establishes the isomorphism property~${1-A\A(X)\in{\rm Isom}(H^{r-1}(\R)^2)}$ for all $X\in\cO_r$.

We now proceed with the proof of~\eqref{SYSE}.
Given $X=(f,h)\in {\rm C}_0^\infty(\R)^2$ with $c_\infty+f>h$ and~${\oo:=(\oo_1,\oo_2)\in {\rm C}^\infty_0(\R)^2}$, let $v\coloneqq(v^1,v^2)$ be given by
\begin{align*} 
v(z)\coloneqq \frac{1}{\pi}\int_\R\frac{(c_\infty+f(s)-y,x-s)}{(x-s)^2+(y-c_\infty-f(s))^2}\oo_1(s)\, ds+\frac{1}{\pi}\int_\R\frac{(h(s)-y,x-s)}{(x-s)^2+(y-h(s))^2}\oo_2(s)\, ds 
\end{align*}
for $z\coloneqq(x,y)\in\R^2\setminus(\Gamma_h\cup\Gamma_{f}^{c_\infty}).$
We next infer from the results in \cite[Appendix A]{BM21x} that the restrictions~$v_i:=v|_{\0_i}$ belong to~${{\rm BUC}(\0_i)\cap {\rm C}^\infty(\0_i)}$,~${1\leq i\leq 3}$.
Moreover, the normal and tangential traces of $v$ are related to the operators  $\A(X)$ and $\B(X)$ defined in \eqref{OpAB}, since
\begin{align*}
\A_1(X)[\oo](x)&\coloneqq\langle v_i(x,c_\infty+f(x))|(1,f'(x))\rangle+(-1)^{1+i}\oo_1(x),\quad i=1,\,2,\, x\in\R,\\[1ex]
\A_2(X)[\oo](x)&\coloneqq\langle v_i(x,h(x))|(1,h'(x))\rangle+(-1)^i\oo_2(x),\quad i=2,\, 3,\, x\in\R,
\end{align*}
 and
\begin{align*}
\B_1(X)[\oo](x)&\coloneqq\langle v_i(x,c_\infty+f(x))|(-f'(x),1)\rangle,\quad i=1,\,2,\, x\in\R, \\[1ex]
 \B_2(X)[\oo](x)&\coloneqq\langle v_i(x,h(x))|(-h'(x),1)\rangle,\quad i=2,\, 3,\, x\in\R,
\end{align*}
see \eqref{v12rand} and \eqref{v23rand} below.
Additionally, it is a matter of direct computations to verify that  
\begin{equation}\label{rotdiv}
\p_yv_i^1-\p_x v_i^2=\p_xv_i^1+\p_y v_i^2=0\qquad\text{in $\0_i$, $1\leq i\leq 3$}.
\end{equation}
Since  $\oo$ is compactly supported,  it is not difficult to prove that there exists a constant $C>0$ such that  
\begin{equation}\label{quantde}
|v(z)|\leq \frac{C}{|z|} \qquad \text{for $|z|\to\infty,$}
\end{equation}
see, e.g., \cite[Lemma A.8]{JBThesis}.
In view of  \eqref{rotdiv} and \eqref{quantde} we then get
\begin{equation}\label{Sto}
\int_{\0_i}{\rm div}\,\begin{pmatrix}
2v_i^1v_i^2\\[1ex]
(v_i^2)^2-(v_i^1)^2
\end{pmatrix}\, dz=0,\qquad 1\leq i\leq 3.
\end{equation}
 Stokes' theorem  then leads us to the following system of equations
 \begin{equation}\label{For1}
 \left.
 \begin{aligned}
 0&=\int_\R\frac{1}{1+f'^2}\big[(\A_1(X)[\oo]-\oo_1)^2-2f'\B_1(X)[\oo](\A_1(X)[\oo]-\oo_1)-(\B_1(X)[\oo])^2\big]\, dx,	\\[2ex]
0&= \int_\R\frac{1}{1+f'^2}\big[(\A_1(X)[\oo]+\oo_1)^2-2f'\B_1(X)[\oo](\A_1(X)[\oo]+\oo_1)-(\B_1(X)[\oo])^2\big]\, dx\\[1ex]
& \hspace{0,45cm}-\int_\R\frac{1}{1+h'^2}\big[(\A_2(X)[\oo]-\oo_2)^2-2h'\B_2(X)[\oo](\A_2(X)[\oo]-\oo_2)-(\B_2(X)[\oo])^2\big]\, dx,\\[2ex]
 0&=\int_\R\frac{1}{1+h'^2}\big[(\A_2(X)[\oo]+\oo_2)^2-2h'\B_2(X)[\oo](\A_2(X)[\oo]+\oo_2)-(\B_2(X)[\oo])^2\big]\, dx.	
 \end{aligned}\right\}
 \end{equation}
Using the algebraic identity
\begin{equation*}
(\A_i(X)[\oo]\pm\oo_i)^2=\frac{(\oo_i-a_i\A_i(X)[\oo])^2-2(1\pm a_i)\oo_i(\oo_i-a_i\A_i(X)[\oo])+(1\pm a_i)^2\oo_i^2}{a_i^2}
\end{equation*}
which holds for all $i=1,\, 2$, the system \eqref{For1} is equivalent to
\begin{equation*}
 \left.
 \begin{aligned}
 0&=\int_\R\frac{1}{1+f'^2}\Big[\frac{(\oo_1-a_1\A_1(X)[\oo])^2-2(1- a_1)\oo_1( \oo_1-a_1\A_1(X)[\oo])+(1- a_1)^2\oo_1^2}{a_1^2} \\[1ex]
 &\hspace{2.425cm}-2f'\B_1(X)[\oo](\A_1(X)[\oo]-\oo_1)-(\B_1(X)[\oo])^2\Big]\, dx,	\\[2ex]
0&= \int_\R\frac{1}{1+f'^2}\Big[\frac{( \oo_1-a_1\A_1(X)[\oo])^2-2(1+ a_1)\oo_1( \oo_1-a_1\A_1(X)[\oo])+(1+ a_1)^2\oo_1^2}{a_1^2}\\[1ex]
&\hspace{2.425cm}-2f'\B_1(X)[\oo](\A_1(X)[\oo]+\oo_1)-(\B_1(X)[\oo])^2\Big]\, dx\\[1ex]
& \hspace{0,45cm}-\int_\R\frac{1}{1+h'^2}\Big[\frac{( \oo_2-a_2\A_2(X)[\oo])^2-2(1- a_2)\oo_2( \oo_2-a_2\A_2(X)[\oo])+(1- a_2)^2\oo_2^2}{a_2^2} \\[1ex]
 &\hspace{2.875cm}-2h'\B_2(X)[\oo](\A_2(X)[\oo]-\oo_2)-(\B_2(X)[\oo])^2\Big]\, dx,\\[2ex]
 0&=\int_\R\frac{1}{1+h'^2}\Big[\frac{( \oo_2-a_2\A_2(X)[\oo])^2-2(1+ a_2)\oo_2( \oo_2-a_2\A_2(X)[\oo])+(1+ a_2)^2\oo_2^2}{a_2^2} \\[1ex]
 &\hspace{2.425cm}-2h'\B_2(X)[\oo](\A_2(X)[\oo]+\oo_2)-(\B_2(X)[\oo])^2\Big]\, dx.	
 \end{aligned}\right\}
 \end{equation*}
 We next multiply  the first equation  by $(1+a_1),$ the second  by $-(1-a_1),$ and the third  by~${-(1-a_1)(1-a_2)/(1+a_2)},$  and then sum up the resulting identities to arrive, 
 after  multiplying the resulting identity  by~${\rm sign\,}(a_1)={\rm sign\,}(a_2)$, at
\begin{equation*} 
 \begin{aligned}
 &\int_\R\frac{1}{1+f'^2}\Big[\frac{( \oo_1-a_1\A_1(X)[\oo])^2}{|a_1|(1-a_1)} +\frac{2{\rm sign\,}(a_1)}{1-a_1}f'\B_1(X)[\oo]( \oo_1-a_1\A_1(X)[\oo])\Big]\, dx	\\[2ex]
&\hspace{0,45cm}+\int_\R\frac{1}{1+h'^2}\Big[\frac{( \oo_2-a_2\A_2(X)[\oo])^2}{|a_2|(1+a_2)} +\frac{2{\rm sign\,}(a_2)}{1+a_2}h'\B_2(X)[\oo]( \oo_2-a_2\A_2(X)[\oo] )\Big]\, dx\\[2ex]	
&=\int_\R\frac{1}{1+f'^2}\Big[ \frac{1+ a_1}{|a_1|}\oo_1^2 +\frac{|a_1|}{1-a_1}(\B_1(X)[\oo])^2\Big]\, dx	\\[2ex]
& \hspace{0,45cm}+\int_\R\frac{1}{1+h'^2}\Big[\frac{1- a_2}{|a_2|} \oo_2^2 + \frac{|a_2|}{1+a_2} (\B_2(X)[\oo])^2\Big]\, dx.	
 \end{aligned}
 \end{equation*}
 The latter estimate, the H\"older inequality, and Young's inequality imply there exists a positive constant~${C=C(\|X'\|_\infty})$ such that
\begin{equation*}
\frac{\|\oo_1-a_1\A_1(X)[\oo]\|_2^2}{|a_1|(1-a_1)}+ \frac{\|\oo_2-a_2\A_2(X)[\oo]\|_2^2}{|a_2|(1+a_2)}\geq C\Big(\frac{1+a_1}{|a_1|}\|\oo_1\|_2^2+\frac{ 1- a_2}{|a_2|}\|\oo_2\|_2^2\Big).
\end{equation*}
The claim~\eqref{SYSE} now follows from the latter estimate, by using \eqref{regcab'} and  a standard density argument. 
\end{proof}

We now conclude with a result on the resolvent of $\A(X).$

\begin{cor}\label{Cor1}
Let $r\in(3/2,2)$ and $X\in \cO_r$ be given. Then 
\[
\lambda-\A(X)\in {\rm Isom}(L_2(\R)^2)\cap {\rm Isom}(H^{r-1}(\R)^2)\qquad\text{for all $\lambda\in\R\setminus(-1,1).$}
\] 
\end{cor}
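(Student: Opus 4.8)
The plan is to split the argument according to whether $|\lambda|>1$ or $\lambda=\pm1$. For $|\lambda|>1$ I would reduce directly to Theorem~\ref{Tinvert}: putting $A:=\lambda^{-1}\,{\rm diag\,}(1,1)$ one has $\max\{|\lambda^{-1}|,|\lambda^{-1}|\}<1$ and $\lambda^{-1}\cdot\lambda^{-1}=\lambda^{-2}>0$, so Theorem~\ref{Tinvert}~(a) gives $\cU_r=\cO_r$, whence $X\in\cU_r$ and $1-A\A(X)=1-\lambda^{-1}\A(X)\in{\rm Isom}(H^{r-1}(\R)^2)$, while the estimate~\eqref{SYSE} obtained in the proof of Theorem~\ref{Tinvert} also yields $1-\lambda^{-1}\A(X)\in{\rm Isom}(L_2(\R)^2)$; multiplying by $\lambda\neq0$ then shows $\lambda-\A(X)=\lambda(1-\lambda^{-1}\A(X))$ is an isomorphism of $L_2(\R)^2$ and of $H^{r-1}(\R)^2$. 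The endpoint values $\lambda=\pm1$ are not of spectral type for $\A(X)$ and require a separate argument, combining injectivity (from the Rellich identities~\eqref{For1}) with a Fredholm argument for surjectivity.

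For the injectivity I would first note that, since the maps $X\mapsto\A_i(X),\,\B_i(X)$ are continuous into $\kL(L_2(\R))$, cf.~\eqref{regcab'}, and since every $X\in\cO_r$ is an $H^r(\R)^2$-limit (hence a $W^1_\infty(\R)^2$-limit) of data in ${\rm C}^\infty_0(\R)^2$ satisfying $c_\infty+f>h$, the three identities in~\eqref{For1} extend by density to all $X\in\cO_r$ and $\oo\in L_2(\R)^2$. If $\oo\in\ker(1-\A(X))$, i.e.\ $\A_1(X)[\oo]=\oo_1$ and $\A_2(X)[\oo]=\oo_2$, then the first identity of~\eqref{For1} reduces to $\int_\R(1+f'^2)^{-1}(\B_1(X)[\oo])^2\,dx=0$, so $\B_1(X)[\oo]=0$; with this the second identity reduces to $4\int_\R(1+f'^2)^{-1}\oo_1^2\,dx+\int_\R(1+h'^2)^{-1}(\B_2(X)[\oo])^2\,dx=0$, so $\oo_1=0$ and $\B_2(X)[\oo]=0$; and then the third identity reduces to $4\int_\R(1+h'^2)^{-1}\oo_2^2\,dx=0$, so $\oo_2=0$. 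Hence $1-\A(X)$ is injective on $L_2(\R)^2$, and the symmetric computation (running through~\eqref{For1} in the order third, second, first) gives the same for $-1-\A(X)$.

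For surjectivity I would show $\lambda-\A(X)$ is Fredholm of index $0$ on $L_2(\R)^2$ for $\lambda=\pm1$. First, $\lambda-\A(0)\in{\rm Isom}(L_2(\R)^2)$: by $\bA(0)=0$ and $S(0)=-S'(0)$ with $S'(0)[\oo_2]=\varphi*\oo_2$ (cf.\ Remark~\ref{R:1}(ii)), the operator $\lambda-\A(0)$ is, on the Fourier side, multiplication by the $2\times2$ matrix having $\lambda$ on the diagonal and $\pm e^{-c_\infty|\xi|}$ off the diagonal, whose determinant $\lambda^2+e^{-2c_\infty|\xi|}\geq1$ is bounded away from $0$. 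Second, $\A(X)-\A(0)$ is compact on $L_2(\R)^2$: its off-diagonal entries $S(X)-S(0)$ and $S'(X)-S'(0)$ are Hilbert--Schmidt, since the non-singular kernels read off from~\eqref{notationa}--\eqref{notat} are differences of kernels that coincide asymptotically as $|x|+|s|\to\infty$ and are square integrable over $\R^2$, and its diagonal entries $\bA(f),\,\bA(h)$ are compact, being $\kL(L_2(\R))$-limits — via~\eqref{regAB'} and $H^r(\R)\hookrightarrow W^1_\infty(\R)$ — of the Hilbert--Schmidt operators $\bA(f_n),\,\bA(h_n)$ associated with smooth compactly supported approximations of $f$ and $h$. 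Thus $\lambda-\A(X)=(\lambda-\A(0))-(\A(X)-\A(0))$ is Fredholm of index $0$, and together with the injectivity above it is an isomorphism of $L_2(\R)^2$; the $H^{r-1}$-statement then follows by a bootstrap, the preimage $\oo=(\lambda-\A(X))^{-1}[F]$ of $F\in H^{r-1}(\R)^2$ satisfying $(\lambda-\bA(f))[\oo_1]=F_1+S(X)[\oo_2]$ and $(\lambda-\bA(h))[\oo_2]=F_2+S'(X)[\oo_1]$ with right-hand sides in $H^{r-1}(\R)$ by~\eqref{SSS}, so that $\oo\in H^{r-1}(\R)^2$ in view of~$\eqref{isom}_2$. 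The hard part is precisely this endpoint case: the Rellich identities~\eqref{For1} lose coercivity at $|\lambda|=1$ (the linear combination of the three identities producing~\eqref{SYSE} in Theorem~\ref{Tinvert} degenerates), so no lower bound $\|(\lambda-\A(X))[\oo]\|_2\gtrsim\|\oo\|_2$ is available and surjectivity must come from Fredholm theory; the main technical step inside that detour is the compactness of $\A(X)-\A(0)$, in particular of $\bA(f)$ and $\bA(h)$.
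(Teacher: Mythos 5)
Your split into $|\lambda|>1$ and $\lambda=\pm 1$ matches the paper, and the case $|\lambda|>1$ is handled exactly as in the paper's proof. For the endpoint $\lambda=\pm 1$, however, you take a genuinely different route — injectivity from the Rellich identities plus a Fredholm argument for surjectivity — whereas the paper extracts a \emph{quantitative} coercivity estimate $\|\oo\|_2\leq C\|(1\pm\A(X))[\oo]\|_2$ directly from~\eqref{For1} and then applies the method of continuity (as in the proof of Theorem~\ref{Tinvert}). Your closing remark that ``the Rellich identities~\eqref{For1} lose coercivity at $|\lambda|=1$ \dots\ so no lower bound $\|(\lambda-\A(X))[\oo]\|_2\gtrsim\|\oo\|_2$ is available'' is therefore mistaken: only the \emph{particular} linear combination used to prove~\eqref{SYSE} degenerates (since $m(A)\to 0$ as $|a_i|\to 1$), but the paper derives from the three identities in~\eqref{For1} the two-sided comparisons
\begin{align*}
\|\B_1(X)[\oo]\|_2&\sim\|\oo_1-\A_1(X)[\oo]\|_2,\qquad \|\B_2(X)[\oo]\|_2\sim\|\oo_2+\A_2(X)[\oo]\|_2,\\
\|\oo_2-\A_2(X)[\oo]\|_2+\|\B_1(X)[\oo]\|_2&\sim\|\oo_1+\A_1(X)[\oo]\|_2+\|\B_2(X)[\oo]\|_2,
\end{align*}
and a short chain of triangle inequalities gives precisely the lower bound you say cannot be obtained. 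Your injectivity computation is sound, and the Fredholm detour is viable in principle: $\lambda-\A(0)$ is indeed a Fourier multiplier with determinant symbol $\lambda^2+e^{-2c_\infty|\xi|}\geq 1$, and a case analysis of the kernel of $\bA(f_n)$ for $f_n\in{\rm C}^\infty_0(\R)$ (zero when both arguments lie outside $\supp f_n$, decay $|x-y|^{-1}$ otherwise, bounded near the diagonal because the numerator is $O((x-y)^2)$) does show that $\bA(f_n)$ is Hilbert--Schmidt, hence $\bA(f)$ compact by~\eqref{regAB'}. What you gain is an argument that is ``soft'' (no explicit constants); what you lose is that the Hilbert--Schmidt claims for $\bA(f_n)$, $\bA(h_n)$, $S(X)-S(0)$, and $S'(X)-S'(0)$ are each a small computation that you assert rather than carry out, so the detour is not actually shorter than the paper's direct estimate, and the paper's version yields a quantitative resolvent bound useful elsewhere.
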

\begin{proof}
The claim follows for  $|\lambda|>1$  from Theorem~\ref{Tinvert} (by choosing $a_1=a_2=1/\lambda$).
It remains to establish the result for $\lambda\in\{\pm 1\}.$
To this end we infer from \eqref{For1}, by using Young's and Hölder's inequalities, there exists a   constant~${C=C(\|X'\|_\infty)}\geq1 $ such that
\begin{equation*}
\begin{aligned}
&C^{-1}\|\oo_1-\A_1(X)[\oo]\|_2\leq \|\B_1(X)[\oo]\|_2\leq C \|\oo_1-\A_1(X)[\oo]\|_2,\\[1ex]
&C^{-1}\|\oo_2+\A_2(X)[\oo]\|_2\leq \|\B_2(X)[\oo]\|_2\leq C \|\oo_2+\A_2(X)[\oo]\|_2,\\[1ex]
&C^{-1}\big(\|\oo_1+\A_1(X)[\oo]\|_2+\|\B_2(X)[\oo]\|_2\big)\leq\|\oo_2-\A_2(X)[\oo]\|_2+ \|\B_1(X)[\oo]\|_2,\\[1ex]
&\|\oo_2-\A_2(X)[\oo]\|_2+ \|\B_1(X)[\oo]\|_2\leq C\big(\|\oo_1+\A_1(X)[\oo]\|_2+\|\B_2(X)[\oo]\|_2\big).
\end{aligned}
\end{equation*}
Using these relations, we get
\begin{align*}
\|\oo\|_2&\leq \|\oo_1-\A_1(X)[\oo]\|_2+\|\oo_1+\A_1(X)[\oo]\|_2+\|\oo_2-\A_2(X)[\oo]\|_2+\|\oo_2+\A_2(X)[\oo]\|_2\\[1ex]
&\leq C\big(|\B_1(X)[\oo]\|_2+\|\oo_1+\A_1(X)[\oo]\|_2+\|\oo_2-\A_2(X)[\oo]\|_2+\|\B_2(X)[\oo]\|_2\big)\\[1ex]
&\leq C\min\{\|\oo_2-\A_2(X)[\oo]\|_2+|\B_1(X)[\oo]\|_2,\|\oo_1+\A_1(X)[\oo]\|_2+\|\B_2(X)[\oo]\|_2 \}\\[1ex]
&\leq  C \|(1\pm\A(X))[\oo]\|_2.
\end{align*}
The latter estimates combined with \eqref{SYSE},
 the continuity property \eqref{regcab'}, the method of continuity \cite[Proposition I.1.1.1]{Am95}, and the arguments in the proof of Theorem~\ref{Tinvert}
  enable us to deduce that the claim indeed  holds also for $\lambda\in\{\pm 1\}.$
\end{proof}

We conclude this section   by establishing the unique solvability of the system~\eqref{eq:Stat} when assuming~$X\in \cU_r$, where~$\cU_r$ is the open subset of~$H^r(\R)^2$ found in Theorem~\ref{Tinvert} 
when choosing $A:=A_\mu$.
This shows in particular that for classical solutions to \eqref{PB} (in the sense of Theorem~\ref{MT1}),  the free interfaces identify at each time instant the velocities and the pressures.

\begin{thm}\label{T:1} Let $r\in(3/2,2)$ and $X=(f,h)\in\cU_r$, where $\cU_r$ is the open subset of~$H^r(\R)^2$ found in Theorem~\ref{Tinvert} for the choice 
$A\coloneqq A_\mu\coloneqq{\rm diag\,}(a_\mu^1,a_\mu^2)$, see \eqref{RESint'}-\eqref{RESint''}.
Then the boundary value  problem
\begin{equation}\label{eq:Stat}
\left.\begin{array}{rllllll}
 v_i\!\!\!\!&=&\!\!\!\!-\displaystyle\frac{k}{\mu_i}\big(\nabla p_i+(0,\rho_i g)\big)&\text{in $ \0_i$, $1\leq i\leq 3$},\\[1ex]
{\rm div}\,  v_i\!\!\!\!&=&\!\!\!\!0 &\text{in $ \0_i$, $1\leq i\leq 3$}, \\[1ex]
p_i\!\!\!\!&=&\!\!\!\!p_{i+1}&\text{on $\p\0_i\cap\p\0_{i+1}$, $i=1,\, 2$}, \\[1ex]
 \langle v_i| \nu_i\rangle\!\!\!\!&=&\!\!\!\!  \langle v_{i+1}| \nu_i\rangle &\text{on $\p\0_i\cap\p\0_{i+1}$, $i=1,\, 2$,}\\[1ex]
v_i(x,y)\!\!\!\!&\to&\!\!\!\! 0 &\text{for  $|(x,y)|\to\infty$,  $1\leq i\leq 3$} 
\end{array}\right\}
\end{equation}
has a unique solution\footnote{The pressures $p_1,\, p_2,\, p_3$ are unique only up to the same additive constant.} $(v_1,v_2,v_3,p_1,p_2,p_3)$ such that
\begin{itemize}
\item$v_i\in {\rm BUC}(\0_i)\cap {\rm C}^\infty(\0_i),\, p_i\in {\rm UC}^1(\0_i)\cap {\rm C}^\infty(\0_i)$ for $1\leq i\leq 3,$\\[-1ex]
\item $[x\mapsto v_i(x,c_\infty+f(x))]\in H^{r-1}(\R)$ for  $1\leq i\leq 2,$\\[-1ex]
\item $[x\mapsto v_i(x,h(x))]\in H^{r-1}(\R) $ for $ 2\leq i\leq 3.$
\end{itemize}
Moreover,  setting $v\coloneqq v_1{\bf 1}_{\0_1}+v_2{\bf 1}_{\0_2}+v_3{\bf 1}_{\0_3}$, it holds for $z\coloneqq{}(x,y)\in\R^2\setminus(\G_h\cup\Gamma_{f}^{c_\infty})$ that
\begin{align}\label{forvelo}
\hspace{-0.25cm}v(z)=\frac{1}{\pi}\int_\R\frac{(c_\infty+f(s)-y,x-s)}{(x-s)^2+(y-c_\infty-f(s))^2}\oo_1(s)\, ds+\frac{1}{\pi}\int_\R\frac{(h(s)-y,x-s)}{(x-s)^2+(y-h(s))^2}\oo_2(s)\, ds,
\end{align}
where $\oo:=(\oo_1,\oo_2)\in H^{r-1}(\R)^2$ denotes the unique solution to the equation
\begin{equation}\label{RES?}
(1-A_\mu\A(X))[\oo]=\Theta X' 
\end{equation}
with  $\Theta:={\rm diag\,}(\Theta_1,\Theta_2)$  defined in \eqref{RESint'}-\eqref{RESint''}.
\end{thm}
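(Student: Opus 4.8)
The plan is to use the ansatz \eqref{forvelo} with density $\oo$ to be determined, reducing the PDE system \eqref{eq:Stat} to the scalar (vector) integral equation \eqref{RES?}, whose unique solvability in $H^{r-1}(\R)^2$ is exactly Theorem~\ref{Tinvert} applied with $A=A_\mu$ (note $\max\{|a_\mu^1|,|a_\mu^2|\}<1$ since all $\mu_i>0$, and $X\in\cU_r$ by assumption). First I would record the regularity and decay properties of the vector field $v$ defined by \eqref{forvelo}: since the kernels are, up to rotation, the Cauchy kernel associated to the graphs $\Gamma_f^{c_\infty}$ and $\Gamma_h$, the results of \cite[Appendix~A]{BM21x} give that each restriction $v_i:=v|_{\0_i}$ lies in ${\rm BUC}(\0_i)\cap {\rm C}^\infty(\0_i)$, that $v$ is curl- and divergence-free in each $\0_i$ (the computation \eqref{rotdiv}), and that $v(z)\to 0$ as $|z|\to\infty$; for the pressures one then sets $p_i:=-(\mu_i/k)U_i-\rho_i g y$ where $U_i$ is a harmonic potential with $\nabla U_i=v_i$ (existence of $U_i$ follows from $v_i$ being curl-free on the simply connected domain $\0_i$), giving $p_i\in {\rm UC}^1(\0_i)\cap {\rm C}^\infty(\0_i)$ and $\eqref{eq:Stat}_1$, $\eqref{eq:Stat}_2$, $\eqref{eq:Stat}_5$ for free. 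The boundary traces: from the jump/trace formulas displayed in the proof of Theorem~\ref{Tinvert} (the identities \eqref{v12rand}, \eqref{v23rand} expressing $\langle v_i|(1,f')\rangle$, $\langle v_i|(-f',1)\rangle$ etc.\ in terms of $\A(X)$, $\B(X)$ and $\oo$), one reads off that the tangential component of $v$ is continuous across each interface while the normal component jumps by $2\oo_j/(1+({\rm graph\ slope})^2)^{1/2}$-type quantities; in particular the kinematic/continuity condition $\eqref{eq:Stat}_4$ holds automatically for \emph{any} $\oo$, because the Cauchy-type kernel produces a single-layer-type (continuous normal trace) contribution — more precisely $\langle v_1|\nu_1\rangle=\langle v_2|\nu_1\rangle$ and $\langle v_2|\nu_2\rangle=\langle v_3|\nu_2\rangle$ are built into \eqref{forvelo}. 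It remains to impose the pressure continuity $\eqref{eq:Stat}_3$: expressing $p_i=p_{i+1}$ on $\Gamma_f^{c_\infty}$ resp.\ $\Gamma_h$ in terms of $v$ and differentiating tangentially turns, after the standard manipulation (using $\nabla p_i=-(\mu_i/k)v_i-(0,\rho_i g)$ and the tangential-trace formulas for $\langle v_i|(1,f')\rangle$, $\langle v_i|(1,h')\rangle$), into precisely the two scalar equations comprising $(1-A_\mu\A(X))[\oo]=\Theta X'$ — this is the algebraic heart of the reformulation and mirrors \cite[MBV18]{MBV18}.

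So the existence half runs: take $\oo\in H^{r-1}(\R)^2$ the unique solution of \eqref{RES?} (Theorem~\ref{Tinvert}), define $v$ by \eqref{forvelo} and $p_i$ via the potentials $U_i$, and verify all five equations of \eqref{eq:Stat} together with the three regularity bullets; the membership $[x\mapsto v_i(x,c_\infty+f(x))]\in H^{r-1}(\R)$ follows from $\oo_j\in H^{r-1}(\R)$, $X\in H^r(\R)^2$, the mapping properties \eqref{regAB}, \eqref{SSS}, \eqref{SSS'} and the trace formulas (the tangential trace is an $\A(X),\B(X)$-combination, hence $H^{r-1}$; the other component uses $S,S'\in\kL(L_2,H^1)$). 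For uniqueness I would argue as follows: given two solutions, their difference $(\wt v_i,\wt p_i)$ solves the homogeneous problem (zero jump conditions, zero far field); one shows each $\wt v_i$ is a gradient of a harmonic function $\wt U_i$ with $\wt U_i\to{\rm const}$ at infinity, uses an energy/Rellich argument (integrate ${\rm div}(\wt U_i\nabla\wt U_i)$ over $\0_i$, summing over $i$ the interface terms cancel because of the transmission conditions $\eqref{eq:Stat}_3$–$\eqref{eq:Stat}_4$ and the decay \eqref{quantde}-type bounds), forcing $\nabla\wt U_i\equiv0$, i.e.\ $\wt v_i=0$ and hence $\wt p_i\equiv$ a common constant. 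Alternatively, and more in the spirit of the paper, represent any solution with the stated regularity in the form \eqref{forvelo} (a layer-potential representation theorem, valid because the velocities are curl-free and decaying), deduce that its density must satisfy \eqref{RES?}, and invoke the uniqueness part of Theorem~\ref{Tinvert}; I expect the paper takes this route since the energy argument requires care at infinity.

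The main obstacle I anticipate is \textbf{uniqueness}: one must show that \emph{every} solution of \eqref{eq:Stat} with the prescribed (rather weak, ${\rm BUC}/{\rm UC}^1$) regularity admits the single-layer representation \eqref{forvelo} — this is where the decay \eqref{eq:S3} of $v_i$ at infinity and the precise jump relations for the Cauchy integral are essential, and where one cannot simply quote a textbook potential-theory result because the domains $\0_i$ are unbounded with graph boundaries of only $H^r\hookrightarrow {\rm C}^{1+\alpha}$ regularity. A secondary technical point is the self-consistency of the pressure construction: one must check that the harmonic potentials $U_i$ with $\nabla U_i=v_i$ exist globally on each (simply connected) $\0_i$ and that $p_i={\rm UC}^1$, which follows from $v_i\in {\rm BUC}\cap {\rm C}^\infty$ but should be stated. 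Everything else — the algebra turning pressure continuity into \eqref{RES?}, the mapping properties giving the Sobolev regularity of the traces, and the verification of $\eqref{eq:Stat}_1$–$\eqref{eq:Stat}_2$, $\eqref{eq:Stat}_5$ — is routine given the groundwork already laid in Section~\ref{Sec:2} and in \cite{BM21x, MBV18}.
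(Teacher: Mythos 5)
Your existence argument follows the paper's proof quite closely (the paper defines the pressures by explicit line integrals rather than abstractly via potentials $U_i$, but that is a presentational choice), and you correctly identify uniqueness as the real issue and correctly guess that the paper does not use an energy/Rellich argument. However there are two substantive problems.

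First, your description of the jump relations is backwards. From the trace formulas \eqref{v12rand}--\eqref{v23rand} one reads off that the \emph{normal} trace $\langle v_i|(-f',1)\rangle=\B_1(X)[\oo]$ (resp.\ $\langle v_i|(-h',1)\rangle=\B_2(X)[\oo]$) is the \emph{same} from both sides of each interface --- this is what makes $\eqref{eq:Stat}_4$ automatic --- while the \emph{tangential} trace $\langle v_i|(1,f')\rangle$ jumps by $\mp 2\oo_1$ (resp.\ $\mp 2\oo_2$ on $\Gamma_h$). Your sentence first asserts the opposite and then, in the same breath, correctly says the Cauchy kernel has ``continuous normal trace''; as written it is self-contradictory.

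Second, your uniqueness sketch appeals to ``a layer-potential representation theorem'' as a black box, but that is precisely what is not available here and not how the paper argues. The paper's actual uniqueness proof is more elementary and deserves to be spelled out because it is the nontrivial step: given a second solution $\wt v$, one \emph{defines} $\oo$ directly from the tangential-velocity jumps of $\wt v$ via \eqref{jumpv}, then observes that $v[\oo]$ defined by \eqref{forvelo} has (from \eqref{v12rand}--\eqref{v23rand}) exactly the same tangential jumps, and has the same normal traces by $\eqref{eq:Stat}_4$; hence $V:=\wt v-v[\oo]$ extends to a field in ${\rm BUC}(\R^2)$. One then builds a global continuous stream function $\Psi$ with $\nabla\Psi=(-V^2,V^1)\in{\rm BUC}(\R^2)$, shows $\Delta\Psi=0$ in $\mathcal D'(\R^2)$ (because $\partial_yV^1-\partial_xV^2=0$ in each $\0_i$ and $V$ is globally continuous), concludes $\Psi=\re u$ for a holomorphic $u$ with $u'=-(V^2,V^1)$ bounded and vanishing at infinity, and invokes Liouville to get $V=0$. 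That $\wt v=v[\oo]$ then forces $\oo$ to solve \eqref{RES?} by the existence half, and $\nabla(p_i-\wt p_i)=0$ gives the pressure claim. The key point your sketch misses is that you do not represent an \emph{arbitrary} solution by a layer potential; you construct the candidate density from the jumps of $\wt v$ and then prove the residual $V$ is globally harmonic (via the stream function) so that Liouville applies. Without this idea the uniqueness half of your proposal does not close.
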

\begin{proof} We devise the proof in two steps.\medskip

\noindent{Existence.} To each  pair   $\oo=(\oo_1,\oo_2)\in H^{r-1}(\R)^2$ we associate the velocity  $v\coloneqq v(X)[\oo]$ which is defined by \eqref{forvelo} in~${\R^2\setminus(\G_h\cup\Gamma_{f}^{c_\infty}).}$ 
 The results established in \cite[Appendix~A]{BM21x} imply that~${v_i:=v|_{\0_i}\in {\rm BUC}^{r-3/2}(\0_i)}\cap{\rm C}^\infty(\0_i)$ for~${1\leq i\leq 3},$
\begin{align}
v_i(x,c_\infty+f(x))&=\frac{1}{\pi}\PV\int_\R\frac{(-\delta_{[x,s]}f,s)}{s^2+(\delta_{[x,s]}f)^2}\oo_1(x-s)\, ds
+\frac{1}{\pi}\int_\R\frac{(-\delta_{[x,s]}X,s)}{s^2+(\delta_{[x,s]}X)^2}\oo_2(x-s)\, ds\nonumber\\[1ex]
&\hspace{0.5cm}+(-1)^{i}\frac{\oo_1(1,f')}{1+f'^2}(x),\quad i=1,\, 2, \, x\in\R,\label{v12rand}
\end{align}
and
\begin{align}
v_i(x,h(x))&=\frac{1}{\pi}\int_\R\frac{(-\delta'_{[x,s]}X,s)}{s^2+(\delta'_{[x,s]}X)^2}\oo_1(x-s)\, ds
+\frac{1}{\pi}\PV\int_\R\frac{(-\delta_{[x,s]}h,s)}{s^2+(\delta_{[x,s]}h)^2} \oo_2(x-s)\, ds\nonumber\\[1ex]
&\hspace{0.5cm}+(-1)^{i+1}\frac{\oo_2(1,h')}{1+h'^2}(x),\quad i=2,\, 3,\, x\in\R.\label{v23rand}
\end{align}
As a direct consequence of the relations \eqref{v12rand} and \eqref{v23rand} we obtain that  $\eqref{eq:Stat}_4$ holds true.
 It is not difficult to infer from \cite[Lemma 2.1 and Lemma 2.5]{BM21x} that
\begin{itemize}
\item $[x\mapsto v_i(x,c_\infty+f(x))]\in H^{r-1}(\R)^2$ for  $1\leq i\leq 2,$\\[-1ex]
\item $[x\mapsto v_i(x,h(x))]\in H^{r-1}(\R)^2 $ for $ 2\leq i\leq 3.$
\end{itemize}
Moreover, \cite[Lemma A.4]{BM21x},  implies that $\eqref{eq:Stat}_2$ and~$\eqref{eq:Stat}_5$ are satisfied and that
\begin{equation}\label{rotfree}
 \p_yv^1=\p_xv^2 \qquad\text{in  $\R^2\setminus(\G_h\cup\Gamma_{f}^{c_\infty})$}.
\end{equation}

Corresponding to $v$, we now define pressures $p_i:\0_i\to\R$, $1\leq i\leq 3,$ by the formula
\begin{equation}\label{pressures}
p_i(z)\coloneqq-\frac{\mu_i}{k}\Big(\int_0^x\langle v_i(s,d_i(s))|(1,d_i'(s))\rangle\, ds+\int_{d_i(x)}^yv_i^2(x,s)\, ds\Big)-\rho_igy+c_i
\end{equation}
for $z:=(x,y)\in\0_i,$
where $v_i\eqqcolon(v_i^1,v_i^2)$, $c_i\in\R$ is a constant, and  with
\[
d_1\coloneqq{}\|f\|_\infty+c_\infty+1,\qquad d_2\coloneqq{}\frac{1}{2}(c_\infty+f+h),\qquad d_3\coloneqq{}-\|h\|_\infty-1. 
\]
Using \eqref{rotfree}, we deduce that~${p_i\in{\rm  C}^1(\0_i)}$  and that~$\eqref{eq:Stat}_1$ is satisfied.
The regularity properties established  for $v_i$  together with~$\eqref{eq:Stat}_1$ show that~${p_i\in {\rm UC}^1(\0_i)\cap {\rm C}^\infty(\0_i)}$, $1\leq i\leq 3$.

We point out that all equations constituting \eqref{eq:Stat}, excepting $\eqref{eq:Stat}_3$, are valid for any choice of~$\oo$.
We now prove that  the dynamic boundary condition $\eqref{eq:Stat}_3$ identifies $\oo$ as the unique solution to \eqref{RES?}.
To this end we take advantage of $\eqref{eq:Stat}_1$ and \eqref{v12rand}-\eqref{v23rand} and compute that
\begin{align*}
\frac{d}{dx}\big((p_2-p_1)(x,c_\infty+f(x))\big)&=(\rho_1-\rho_2)gf'(x)+\Big\langle\frac{\mu_1v_1-\mu_2v_2}{k}(x,c_\infty+f(x))\Big|(1,f'(x))\Big\rangle\\[1ex]
&=(\rho_1-\rho_2)gf'(x)-\frac{\mu_1+\mu_2}{k}\oo_1(x)+\frac{\mu_1-\mu_2}{k}\A_1(X)[\oo](x)\\[1ex]
\frac{d}{dx}\big((p_3-p_2)(x,h(x))\big)&=(\rho_2-\rho_3)gh'(x)+\Big\langle\frac{\mu_2v_2-\mu_3v_3}{k}(x,h(x)))\Big|(1,h'(x))\Big\rangle\\[1ex]
&=(\rho_2-\rho_3)gh'(x)-\frac{\mu_2+\mu_3}{k}\oo_2(x)+\frac{\mu_2-\mu_3}{k}\A_2(X)[\oo](x)
\end{align*}
for $x\in\R$. 
Hence,~$(p_2-p_1)|_{\G_f^{c_\infty}}$ and~$(p_3-p_2)|_{\G_h}$ are constant functions if and only if $\oo$ is the unique solution to \eqref{RES?}.
In this case we may choose the constants~$c_i$, $1\leq i\leq 3,$ to achieve that~$\eqref{eq:Stat}_3$ is satisfied. 
 Therewith we have proven there exists at least a solution to~\eqref{eq:Stat}. \medskip
 
\noindent{Uniqueness.} In order to establish the uniqueness of the solution,
 let $(\wt v_1,\wt v_2,\wt v_3,\wt p_1,\wt p_2,\wt p_3)$ be a further solution to \eqref{eq:Stat} with the required regularity properties and 
 set~${\wt v\coloneqq \wt v_1{\bf 1}_{\0_1}+\wt v_2{\bf 1}_{\0_2}+\wt v_3{\bf 1}_{\0_3}}.$
 The main step is to show that the function  $\oo:=(\oo_1,\oo_2)\in H^{r-1}(\R)^2$  given by 
   \begin{equation}\label{jumpv}
   \begin{aligned}
    \oo_1(x)&:=\frac{1}{2}\langle(\wt v_2- \wt v_1)(x,c_\infty+f(x))|(1,f'(x)),\\[1ex]
      \oo_2(x)&:=\frac{1}{2}\langle( \wt v_3- \wt v_2)(x,h(x))|(1,h'(x)),
   \end{aligned} 
 \end{equation}
is the unique solution to \eqref{RES?} and that  $\wt v=v$, where $v=v[\oo]$ is defined in \eqref{forvelo}.

  To start, we infer from \eqref{v12rand} and \eqref{v23rand}  that the relations \eqref{jumpv} remain valid  if we replace~$\wt v$ by~$v$.
  This   together with $\eqref{eq:Stat}_4$   implies that the global velocity field~${V:=(V^1,V^2):=\wt v-v}$ belongs to ${\rm BUC}(\R^2)$. 
 Let $\Psi\coloneqq \psi_1{\bf 1}_{\ov{\0_1}}+\psi_2{\bf 1}_{\ov{\0_2}}+\psi_3{\bf 1}_{\ov{\0_3}}$, where
$\psi_i:\ov{\0_i}\to\R$ are given  by
\begin{align*}
\psi_i(z)&\coloneqq{}\int_{h(x)}^yV^1(x,s)\, ds-\int_0^x\langle V(s,h(s))|(-h'(s),1)\rangle\, ds,\qquad i=2,\, 3,\\[1ex]
\psi_1(z)&\coloneqq{}\int_{c_\infty+f(x)}^yV^1(x,s)\, ds+\psi_2(x,c_\infty+f(x)).
\end{align*}
We clearly have  $\Psi\in {\rm C}(\R^2)$.
 Additionally, using Stokes's  theorem and the relation ${\rm div \,} V=0$ in~$\0_i$, $1\leq i\leq 3,$  we deduce that~${\nabla\psi_i=(-V^2,V^1)}$ in $\mathcal{D}'(\0_i)$. 
This implies that~${\psi_i\in {\rm UC}^1(\0_i)}$.
A simple computation which uses the continuity of $\Psi$ shows that the  (distributional) gradient~${\nabla\Psi=(-V^2,V^1)}$ belongs to~${{\rm BUC}(\R^2)}$.
Given~${\varphi\in{\rm C}^\infty_0(\R^2),}$ we obtain in virtue of the latter property that
\begin{align*}
\langle\Delta\Psi,\varphi\rangle=-\int_{\R^2}\langle\nabla\Psi|\nabla\varphi\rangle\, dz=\int_{\R^2}\langle( V^2,-V^1)|\nabla\varphi\rangle\, dz=\langle \p_yV^1-\p_x V^2,\varphi\rangle.
\end{align*}
Moreover, in virtue of  $\eqref{eq:Stat}_1$,  we have  $\p_yV^1-\p_x V^2=0$ in  $\mathcal{D}'(\0_i) $ for $1\leq i\leq 3$, and  taking advantage of~${V\in{\rm BUC}(\R^2)}$ we deduce that 
$\p_yV^1-\p_x V^2=0$ in  $\mathcal{D}'(\R^2).$
Altogether we conclude that $\Delta\Psi=0$ in $\mathcal{D}'(\R^2)$.
Consequently,~$\Psi$ is the real part of a holomorphic function~${u:\C\to\C}$.
Since $u'$ is  holomorphic too and ${u'=(\p_x\Psi,-\p_y\Psi)=-(V^2,V^1)}$ is bounded and vanishes at infinity, cf. $\eqref{eq:Stat}_5$, Liouville's theorem  yields $u'=0$, and therefore $V=0$.
Hence, $\wt v=v[\oo]$, and, as shown in the first part of the proof, $\oo$ has to solve \eqref{RES?}. 
Finally, we note that $\eqref{eq:Stat}_1$ and $\eqref{eq:Stat}_3$ imply that 
 $$P\coloneqq (p_1-\wt p_1){\bf 1}_{\ov{\0_1}}+(p_2-\wt p_2){\bf 1}_{\ov{\0_2}}+(p_3-\wt p_3){\bf 1}_{\ov{\0_3}}$$
   satisfies $\nabla P=0$ in $\R^2$, meaning that $P$ is constant in $\R^2$. 
   This completes the proof.
 \end{proof}

\section{The contour integral formulation and the proof of Theorem~\ref{MT1}}\label{Sec:3}

\subsection{The contour integral formulation}
Let $r\in(3/2,2)$ be fixed and let $\cU_r$ be the open subset of $\cO_r$ identified in Theorem~\ref{T:1}.
In view of  Theorem \ref{T:1}  we conclude  that if~${X(t):=(f(t),h(t))}$ belongs to $\cU_r$ at each time instant~${t\geq0}$,
then the velocity $v(t)$ is identified  according to the formulas \eqref{forvelo} and \eqref{RES?}.
Recalling \eqref{eq:S4}, the multiphase Muskat problem \eqref{PB}  can be now recast as the nonlinear and nonlocal autonomous evolution problem  
\begin{align}\label{NNEP}
\frac{dX(t)}{dt}=\Phi(X(t)),\quad t\geq0,\qquad X(0)=X_0,
\end{align}
where $\Phi:=(\Phi_1,\Phi_2):\cU_r\subset H^{r}(\R)^2\to H^{r-1}(\R)^2$ is  given  by
\begin{equation}\label{PHI}
\Phi(X):=\B(X)[\oo].
\end{equation}
Here $\B(X)$ is the operator introduced in \eqref{OpAB} and $\oo:=(\oo_1,\oo_2)\in H^{r-1}(\R)^2$ denotes  the unique solution to the equation 
\begin{equation}\label{INVd}
(1-A_\mu\A(X))[\oo]=\Theta X'.
\end{equation}
Below we prove that the operator $\Phi$ is smooth
\begin{equation}\label{smooth}
\Phi\in {\rm C}^\infty(\cU_r, H^{r-1}(\R)^2),
\end{equation} 
see Corollary~\ref{Cor2}.
Furthermore, we show in Theorem~\ref{T:GP} that its Fr\'echet derivative $\p\Phi(X)$ considered as an unbounded operator in $H^{r-1}(\R)^2$ with domain $H^{r}(\R)^2$,
 generates an analytic semigroup in $\kL(H^{r-1}(\R)^2)$, which in the notation introduced in \cite{Am95}  writes as
 \begin{align}\label{Gen}
 -\p\Phi(X)\in\mathcal{H}(H^{r}(\R)^2,H^{r-1}(\R)^2).
\end{align}  
The  property \eqref{Gen} holds for each  $X$ belonging to the open subset $\cV_r$ of $\cU_r$, see~\eqref {cvr} below,  identified by the Rayleigh-Taylor condition.

\subsection{Smoothness of $\Phi$}
In order to establish the smoothness of $\Phi$, see \eqref{smooth},  we first introduce some notation.
Given $n,\, m\in\N$, Lipschitz continuous  maps ${a_1,\ldots, a_{m},\, b_1, \ldots, b_n:\mathbb{R}\to\mathbb{R}}$, and~${\oo\in L_2(\R)}$, we define  the singular integral operator
\begin{equation}\label{BNM}
B_{n,m}(a_1,\ldots, a_m)[b_1,\ldots,b_n,\oo](x)\coloneqq\frac{1}{\pi}\PV\int_\mathbb{R} 
\cfrac{\prod_{i=1}^{n}\big(\delta_{[x,s]} b_i / s\big)}{\prod_{i=1}^{m}\big[1+\big(\delta_{[ x, s]}  a_i / s\big)^2\big]} \frac{\oo(x- s)}{ s}\, d s.
\end{equation}
Furthermore, for the sake of brevity we set
\begin{equation}\label{Bom}
B^0_{n,m}(f)[\oo]\coloneqq{} B_{n,m}(f,\ldots,f)[f,\ldots,f,\oo]
\end{equation}
This family of  operators has been introduced in \cite{MBV19} (up to the multiplying constant $1/\pi$) in the context of the two-phase Muskat problem, but it is also important in the study of the two-phase Stokes problem, cf.~\cite{MP2021, MP2021x}.
The next result collects some fundamental properties of these operators.

\begin{lemma}\label{L:MP0} Let  $r\in(3/2 ,2)$ and  $n,\, m\in\N$.
\begin{itemize}
\item[(i)] It holds that $\big[f\mapsto B^0_{n,m}(f)\big]\in{\rm C}^{\infty} (H^r(\R),\kL(H^{r-1}(\mathbb{R}))).$
\item[(ii)] Given $n\geq 1$, $r'\in(3/2,2)$, and $a_1,\ldots, a_m \in H^r(\mathbb{R})$, there exists a positive constant~$C$, that depends only on $n,\, m$, $r$, $r'$,  and $\max_{1\leq i\leq m}\|a_i\|_{H^r}$, such that
\begin{equation*} 
\begin{aligned} 
&\| B_{n,m}(a_1,\ldots, a_m)[b_1,\ldots, b_n,\oo] -\oo B_{n-1,m}(a_1,\ldots, a_{m})[b_2,\ldots, b_n,b_1']\|_{H^{r-1}}\\[1ex]
&\hspace{3cm}\leq C \|b_1\|_{H^{r'}}\|\oo\|_{H^{r-1}}\prod_{i=2}^{n}\|b_i\|_{H^r}
\end{aligned}
\end{equation*}
for all $b_1,\ldots, b_n\in H^r(\mathbb{R})$ and $\oo\in H^{r-1}(\mathbb{R}).$
\end{itemize}
\end{lemma}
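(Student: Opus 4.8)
The plan is to reduce both statements to known mapping and commutator properties of the building-block operators from the cited works, since $B_{n,m}^0$ and $B_{n,m}$ are, up to relabelling, exactly the operators studied in \cite{MBV19} (and the companion Stokes papers \cite{MP2021, MP2021x}). For part~(i), the route is: first observe that $B_{n,m}^0(f)$ is obtained from the smooth (polynomial) nonlinearity $t\mapsto t^n/\prod(1+t^2)$ evaluated on the divided difference $\delta_{[x,s]}f/s$, together with the fixed singular kernel $1/s$. One then invokes the abstract result that, for $r>3/2$, the map $f\mapsto [\,b\mapsto$ such a parameter-dependent singular integral operator $]$ is real-analytic, hence $\mathrm C^\infty$, into $\kL(H^{r-1}(\R))$; this is precisely \cite[Lemma~6/Prop.~etc.]{MBV19} recast with our normalisation constant $1/\pi$. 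The key analytic inputs behind that statement are the algebra property of $H^{r-1}(\R)$ for $r-1>1/2$, the boundedness of $B_{n,m}$ itself on $H^{r-1}(\R)$ (a Cald\'eron-commutator type estimate), and the fact that differentiating in $f$ only produces further operators of the same family with one more factor $\delta_{[x,s]}(\cdot)/s$ in the numerator or denominator; a Neumann/geometric-series argument handles the denominator. I would spell this out as: write the difference quotient $B_{n,m}^0(f+th)-B_{n,m}^0(f)$, expand using $a^n - b^n$ and $(1+a^2)^{-1}-(1+b^2)^{-1}$ telescoping identities, identify the linear term as a finite sum of operators $B_{n',m'}$ with arguments among $f,h$, and estimate the remainder by the boundedness statement in part of Lemma~\ref{L:MP0} plus the $H^{r-1}$-algebra property.

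For part~(ii), the goal is the commutator estimate quantifying that, modulo lower-order terms, $B_{n,m}(a_1,\dots,a_m)$ acts on the product $\prod \delta_{[x,s]}b_i/s$ by ``pulling out'' one factor $b_1$ as a multiplier at the cost of replacing $\delta_{[x,s]}b_1/s$ by $b_1'$. Concretely, write
\[
\frac{\delta_{[x,s]}b_1}{s} = b_1'(x) - \frac{1}{s}\int_{x-s}^{x}\big(b_1'(x) - b_1'(\sigma)\big)\,d\sigma,
\]
so that $B_{n,m}(a_1,\dots,a_m)[b_1,\dots,b_n,\oo] - \oo\, b_1' \cdot B_{n-1,m}(a_1,\dots,a_m)[b_2,\dots,b_n,1]$ — wait, more precisely one matches with $B_{n-1,m}(\dots)[b_2,\dots,b_n,b_1']$ — equals $B_{n,m}$ applied with the integral remainder in place of the first numerator factor. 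The remainder has an extra integrated-difference structure in $b_1'$, which gains regularity: it is controlled in $H^{r-1}$ by $\|b_1'\|_{H^{r'-1}} = \|b_1\|_{H^{r'}}$ (using $r'>3/2$ so that $b_1'\in H^{r'-1}$ with $r'-1>1/2$ an algebra), times $\|\oo\|_{H^{r-1}}$ and $\prod_{i\ge2}\|b_i\|_{H^r}$, with the constant absorbing $\max\|a_i\|_{H^r}$ through the denominator bounds. The dependence on $r'$ rather than $r$ in the $b_1$-norm is exactly the point of the lemma and is what makes it usable for tame/paralinearization arguments later.

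The main obstacle, and where I would be most careful, is the quantitative boundedness of the family $B_{n,m}$ on $H^{r-1}(\R)$ together with the bookkeeping that differentiation (part (i)) or the commutator expansion (part (ii)) stays within the same family with controlled constants. This is genuinely the place where the ``Cald\'eron-commutator'' harmonic analysis enters: one needs $B_{n,m}(a_1,\dots,a_m)\in\kL(H^{r-1}(\R))$ with operator norm bounded by a polynomial in $\max_i\|a_i\|_{H^r}$, uniformly in the admissible numerator functions. Rather than reprove this, I would cite it from \cite{MBV19} (and \cite{MP2021, MP2021x}) and then organise the two proofs as: (1) algebraic telescoping identities reducing everything to a \emph{finite} sum of $B_{n',m'}$'s, (2) application of the cited boundedness, (3) for part~(ii) only, the elementary gain of half a derivative from the integrated remainder representation of $\delta_{[x,s]}b_1/s - b_1'(x)$, combined with the $H^{r-1}$ and $H^{r'-1}$ algebra properties. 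The remaining estimates are then routine product estimates in Sobolev spaces and I would not grind through them.
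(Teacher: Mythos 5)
Your proposal takes essentially the same route as the paper: the paper's entire proof of Lemma~\ref{L:MP0} is a one-line citation, namely \cite[Corollary~C.5]{MP2021} for part~(i) and \cite[Lemma~6]{AM21x} for part~(ii), and you likewise propose to defer to the boundedness/smoothness results in that family of references rather than reproving them. For the record, the precise reference for~(ii) is \cite{AM21x} (not \cite{MBV19}), though the operators are the same up to normalization.

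One caution about the supplementary sketch you give for part~(ii), in case you intend to fill it in later. Your decomposition
\[
\frac{\delta_{[x,s]}b_1}{s} \;=\; b_1'(x) \;-\; \frac{1}{s}\int_{x-s}^{x}\bigl(b_1'(x)-b_1'(\sigma)\bigr)\,d\sigma
\]
freezes the derivative at $x$ and therefore produces the operator $b_1'(x)\, B_{n-1,m}(a_1,\dots,a_m)[b_2,\dots,b_n,\oo]$, i.e.\ $b_1'$ appears as a multiplier on the outside while $\oo$ remains the density. The lemma's target is the opposite: $\oo$ is the outside multiplier and $b_1'$ is inserted into the density slot, $\oo\, B_{n-1,m}(a_1,\dots,a_m)[b_2,\dots,b_n,b_1']$. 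Passing from one to the other is not a routine product estimate; it requires freezing $b_1'$ at $x-s$ rather than $x$ (so that the singular integral produces $B_{n-1,m}[\,\cdots,\oo\,b_1']$), followed by a genuine commutator estimate for $[\,B_{n-1,m},\,\oo\cdot\,]$, of the type appearing as Lemma~\ref{L:AL1} in this paper (cf.\ \cite[Lemma~12]{AM21x}). That second commutator is exactly where the factor $\|\oo\|_{H^{r-1}}$ rather than $\|\oo\|_\infty$ enters, and omitting it is the one place your sketch would not close. Since you ultimately invoke the cited lemma rather than this sketch, this does not affect the correctness of the proposal as written.
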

\begin{proof}
The assertion (i) is established in  \cite[Corollary C.5]{MP2021}   and (ii) in \cite[Lemma 6]{AM21x}.
\end{proof}
The importance of the  operators $B_{n,m}$  in this context is indicated by the formulas~\eqref{notationa} and~\eqref{OpAB}  in view of
\begin{equation}\label{Forab}
\bA(u)=u'B_{0,1}^0(u)-B_{1,1}^0(u)\qquad \text{and}\qquad \bB(u)=B_{0,1}^0(u)+u'B_{1,1}^0(u).
\end{equation}
In particular, Lemma \ref{L:MP0}~(i) and the algebra property of $H^{r-1}(\mathbb{R})$ imply that
\begin{align}\label{RegAB}
 [u\mapsto \bA(u)],\, [u\mapsto \bB(u)]\in{\rm C}^{\infty} (H^r(\R),\kL(H^{r-1}(\mathbb{R}))).
\end{align}

Moreover, given  $1\leq m\in\N$ and $X_i\coloneqq{}(f_i,h_i)\in \cO_r$, $1\leq i\leq m$, we set
\begin{equation}\label{CDms}
\begin{aligned}
C_m(X_1,\ldots,X_m)[\oo](x)&\coloneqq \frac{1}{\pi}\int_\R\frac{\oo(x-s)}{\prod_{i=1}^{m}\big[s^2+ (\delta_{[ x, s]}X_i)^2\big]}\, ds,\\[1ex]
 C'_m(X_1,\ldots,X_m)[\oo](x)&\coloneqq \frac{1}{\pi}\int_\R\frac{\oo(x-s)}{\prod_{i=1}^{m}\big[s^2+ (\delta'_{[ x, s]}X_i)^2\big]}\, ds,\\[1ex]
 D_m(X_1,\ldots,X_m)[\oo](x)&\coloneqq\frac{1}{\pi}\int_\R\frac{s\oo(x-s)}{\prod_{i=1}^{m}\big[s^2+ (\delta_{[ x, s]}X_i)^2\big]}\, ds,\\[1ex]
 D'_m(X_1,\ldots,X_m)[\oo](x)&\coloneqq\frac{1}{\pi}\int_\R\frac{s\oo(x-s)}{\prod_{i=1}^{m}\big[s^2+ (\delta'_{[ x, s]}X_i)^2\big]}\, ds.
\end{aligned}
\end{equation}
where  we use again the notation introduced in \eqref{notat}. 
The operators in \eqref{CDms} have been defined (up to the multiplying constant $1/\pi$) in \cite{BM21x}.
\begin{lemma}\label{L:MP1}
Given  $r\in(3/2 ,2)$,   $ m\in\N$, and $E\in\{C, \,C',\, D,\, D'\}$ we have
 \[
 \big[X\mapsto E_{m}(X,\ldots,X)\big]\in{\rm C}^{\infty} (\cO_r,\kL(L_2(\R),H^{1}(\mathbb{R}))).
 \]
\end{lemma}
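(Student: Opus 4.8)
The plan is to reduce the statement to the already-established properties of the operators $B_{n,m}$ from Lemma~\ref{L:MP0} by an explicit algebraic rewriting of the kernels in \eqref{CDms}. Writing $X=(f,h)$ and recalling \eqref{notat}, the quantity $\delta_{[x,s]}X = c_\infty+f(x)-h(x-s)$ is \emph{not} a difference quotient of a single function (it mixes $f$ evaluated at $x$ with $h$ evaluated at $x-s$ plus the constant $c_\infty$), so the operators $E_m$ do not literally fall under \eqref{Bom}. However, after dividing numerator and denominator of the integrand by $s^{2m}$ (resp.\ $s^{2m-1}$ for $D_m,D'_m$), each factor in the denominator becomes $1 + \big((\delta_{[x,s]}X)/s\big)^2$, which is exactly the shape appearing in \eqref{BNM} — with the role of ``$a_i$'' played by an auxiliary Lipschitz function built from $f$, $h$, and the affine function $x\mapsto c_\infty$. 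The first step is therefore to record the identity expressing $E_m(X,\dots,X)$ as a finite sum (here a single term, since the numerator is $1$ or $s$) of operators of the type $B_{0,m}$ or $B_{1,m}$ evaluated on these auxiliary functions, i.e.\ $C_m(X,\dots,X)[\oo] = B_{0,m}(a,\dots,a)[\oo]/(\text{const})$-type expressions, and similarly for $C'_m,D_m,D'_m$.

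Since $c_\infty$ is a constant, $x\mapsto c_\infty$ is smooth with vanishing derivative, and the map $X=(f,h)\mapsto a$ sending $X$ to the relevant auxiliary data is affine, hence real-analytic, from $H^r(\R)^2$ into the appropriate space; composing with the smooth dependence $[a\mapsto B_{n,m}(a,\dots,a)]$ furnished by (a minor extension of) Lemma~\ref{L:MP0}(i) gives the claimed $\mathrm C^\infty$-dependence into $\kL(L_2(\R))$. The remaining point is the \emph{smoothing}: one must upgrade the target from $\kL(L_2(\R))$ to $\kL(L_2(\R),H^1(\R))$. For this I would differentiate once under the integral sign in \eqref{CDms}: $\p_x\big(E_m(X,\dots,X)[\oo]\big)$ produces (i) a term with $\oo$ replaced by $\oo'$, which by the $L_2$-boundedness part is controlled by $\|\oo\|_{L_2}$ up to moving the derivative onto the kernel via the translation structure $\oo(x-s)$ and integrating by parts in $s$, and (ii) terms where the $x$-derivative hits the kernel, producing kernels of the same $B_{n,m}$ type but with an extra factor $\delta_{[x,s]}(f' \text{ or } h')/s$ or $f'(x),h'(x)$ in the numerator; the former are again $L_2$-bounded operators by Lemma~\ref{L:MP0}(i) (now with $n\geq 1$), the latter are $f',h'\in H^{r-1}(\R)$ multiplying $L_2$-bounded operators, and $H^{r-1}(\R)\hookrightarrow L_\infty(\R)$ since $r>3/2$. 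Collecting these shows $\p_x E_m(X,\dots,X)\in\kL(L_2(\R))$ with norm bounded in terms of $\|X\|_{H^r}$, hence $E_m(X,\dots,X)\in\kL(L_2(\R),H^1(\R))$, and the same computation shows the map $X\mapsto \p_x E_m(X,\dots,X)$ is smooth into $\kL(L_2(\R))$ by the chain rule applied to the $B_{n,m}$-representation; this is precisely the content of \eqref{SSS} as proved in \cite[Lemma~2.5]{BM21x}, and here one simply reproduces that argument uniformly in the number of denominator factors $m$.

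The main obstacle is bookkeeping rather than conceptual: one has to carefully organize the $x$-differentiation of an $m$-fold product in the denominator (it generates $m$ terms by the Leibniz rule, each with one extra difference-quotient factor and one fewer ``good'' denominator factor becoming two extra — but the \emph{total} structure stays within the $B_{n,m'}$ family with $m'\le 2m$), and verify that every resulting operator either is captured by Lemma~\ref{L:MP0}(i) or is of the form (multiplier in $H^{r-1}$) $\circ$ ($L_2$-bounded operator). Once the identity expressing $E_m$ and $\p_x E_m$ through the $B_{n,m}$-calculus is written down, smoothness is automatic from the chain rule together with Lemma~\ref{L:MP0}(i) and the Banach algebra property of $H^{r-1}(\R)$. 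I would therefore structure the proof as: (1) the kernel-rewriting identities; (2) invoke Lemma~\ref{L:MP0}(i) plus affinity of $X\mapsto$ (auxiliary data) for the $\kL(L_2)$-smoothness; (3) the $\p_x$ computation for the $H^1$-smoothing, citing \cite[Lemma~2.5]{BM21x} for the one-factor prototype and noting the extension to general $m$ is identical.
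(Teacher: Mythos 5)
The reduction on which your whole argument rests does not work. You correctly observe that $\delta_{[x,s]}X = c_\infty + f(x) - h(x-s)$ is not a difference of a single function at two points, but then assert that after dividing by $s^{2m}$ each denominator factor is ``exactly the shape appearing in \eqref{BNM}, with the role of $a_i$ played by an auxiliary Lipschitz function built from $f$, $h$, and the affine function $x\mapsto c_\infty$''. No such Lipschitz function exists: if $a(x)-a(x-s) = c_\infty + f(x) - h(x-s)$ for all $x,s$, setting $s=0$ forces $0 = c_\infty + f(x) - h(x)$, contradicting $\inf(c_\infty + f - h) > 0$. The structural difference is not cosmetic. In $B_{n,m}$ the quantity $\delta_{[x,s]}a_i/s$ is bounded as $s\to 0$ and the kernel carries a genuine $1/s$ singularity, so one needs a principal value and the operator only maps $L_2\to L_2$. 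In $C_m, C'_m, D_m, D'_m$, the quantity $\delta_{[x,s]}X_i$ tends to $c_\infty+f(x)-h(x) > 0$ as $s\to 0$, so the kernel is bounded near the diagonal, no principal value is taken in \eqref{CDms}, and this nondegeneracy is precisely what makes these operators smooth, yielding the $\kL(L_2(\R),H^1(\R))$-target. Moreover, after your division by $s^{2m}$ the numerator becomes $s^{-2m}\,\oo(x-s)$, not $\oo(x-s)/s$, so the power of $s$ does not match \eqref{BNM} either. Consequently, every place in your plan where you invoke Lemma~\ref{L:MP0}(i) — for the $\kL(L_2)$-smoothness in step (2), and for the terms produced by $\p_x$ in step (3) — is applied to an operator that is not of the $B_{n,m}$ family, and the argument collapses.

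The paper's own proof is a citation to \cite[Lemma~2.6]{BM21x}, where these specific operators (with the mixed-function, nonvanishing quantity $\delta_{[x,s]}X$) are treated directly. The direct route you gesture at in step (3) — integrating by parts to move $\p_x$ from $\oo(x-s)$ onto the kernel and then estimating the resulting kernels by hand — is the viable one, but it must be carried out with ad hoc kernel bounds exploiting the lower bound $\inf(c_\infty+f-h)>0$ and the $|s|^{-2m}$ (resp.\ $|s|^{-2m+1}$) decay at infinity, not by reduction to the $B_{n,m}$ calculus.
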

\begin{proof}
This is a consequence of \cite[Lemma 2.6]{BM21x}. 
\end{proof}

In view of  the formulas \eqref{Bom} and \eqref{CDms} we may represent the operators $\A(X)=(\A_1(X),\A_2(X))$ and~$\B(X)=(\B_1(X),\B_2(X))$ defined in \eqref{OpAB} as follows
\begin{equation}\label{ForAB}
\begin{aligned}
\A_1(X)[\oo]&=\bA(f)[\oo_1]+ f'D_{1}(X)[\oo_2]-(c_\infty+f)C_1(X)[\oo_2]+C_1(X)[h\oo_2],\\[1ex]
\A_2(X)[\oo]&=h'D_{1}'(X)[\oo_1]+(c_\infty-h)C_1'(X)[\oo_1]+C_1'(X)[f\oo_1]+\bA(h)[\oo_2],\\[1ex]
\B_1(X)[\oo]&=\bB(f)[\oo_1]+  D_{1}(X)[\oo_2]+(c_\infty+f)f'C_1(X)[\oo_2]-f'C_1(X)[h\oo_2],\\[1ex]
\B_2(X)[\oo]&=D_{1}'(X)[\oo_1]-(c_\infty-h)h'C_1'(X)[\oo_1]-h'C_1'(X)[f\oo_1]+\bB(h)[\oo_2].
\end{aligned}
\end{equation}
In then follows from  \eqref{RegAB}, \eqref{ForAB}, Lemma~\ref{L:MP1}, the algebra property of $H^{r-1}(\mathbb{R}),$ and the embedding~${H^1(\R)\hookrightarrow H^{r-1}(\R)}$
that 
\begin{align}\label{RegcAB}
 [X\mapsto \A(X)],\, [X\mapsto \B(X)]\in{\rm C}^{\infty} (\cO_r,\kL(H^{r-1}(\mathbb{R})^2)).
\end{align}

We next introduce the solution operator defined by the equation \eqref{INVd}.
\begin{lemma}\label{L:oo}
Given $r\in (3/2,2)$ and $X\in\cU_r$, let $\oo(X):=\Theta(1-A_\mu\A(X))^{-1}[X']\in H^{r-1}(\mathbb{R})^2$  denote the unique solution to \eqref{INVd}. It then holds 
\[
\oo\in {\rm C}^\infty(\cU_r, H^{r-1}(\mathbb{R})^2). 
\]
\end{lemma}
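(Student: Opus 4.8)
The plan is to express $\oo(X)$ as a composition of maps each of which is known to be smooth, and then invoke the chain rule. Recall from \eqref{INVd} that $\oo(X) = \Theta\,(1-A_\mu\A(X))^{-1}[X']$. The map $X\mapsto X'$ is linear and bounded from $H^r(\R)^2$ to $H^{r-1}(\R)^2$, hence smooth, and left-multiplication by the constant matrix $\Theta$ is likewise smooth. So everything reduces to proving that the map
\[
\cU_r\ni X\longmapsto (1-A_\mu\A(X))^{-1}\in\kL(H^{r-1}(\R)^2)
\]
is smooth.

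First I would note that by \eqref{RegcAB} the map $[X\mapsto\A(X)]$ belongs to ${\rm C}^\infty(\cO_r,\kL(H^{r-1}(\R)^2))$; composing with the (linear, hence smooth) map $T\mapsto 1-A_\mu T$ shows $[X\mapsto 1-A_\mu\A(X)]\in{\rm C}^\infty(\cO_r,\kL(H^{r-1}(\R)^2))$. Next, by the very definition of $\cU_r$ in Theorem~\ref{Tinvert} (applied with $A:=A_\mu$), for every $X\in\cU_r$ the operator $1-A_\mu\A(X)$ lies in ${\rm Isom}(H^{r-1}(\R)^2)$. Thus the map $X\mapsto 1-A_\mu\A(X)$ sends $\cU_r$ into the open subset ${\rm Isom}(H^{r-1}(\R)^2)$ of $\kL(H^{r-1}(\R)^2)$. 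Finally, inversion $[S\mapsto S^{-1}]:{\rm Isom}(H^{r-1}(\R)^2)\to{\rm Isom}(H^{r-1}(\R)^2)$ is a smooth (indeed real-analytic) map between open subsets of the Banach algebra $\kL(H^{r-1}(\R)^2)$, by the standard Neumann series argument. Composing the three smooth maps
\[
X\;\longmapsto\; 1-A_\mu\A(X)\;\longmapsto\;(1-A_\mu\A(X))^{-1}\;\longmapsto\;\Theta\,(1-A_\mu\A(X))^{-1}[X']
\]
(where in the last arrow we also feed in the smooth map $X\mapsto X'$ and use that evaluation/composition is bilinear and bounded, hence smooth) yields $\oo\in{\rm C}^\infty(\cU_r,H^{r-1}(\R)^2)$, as claimed.

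There is no serious obstacle here: the only point requiring a word of care is that all the isomorphism and smoothness statements must be read in the space $\kL(H^{r-1}(\R)^2)$ — which is exactly the content of \eqref{RegcAB} and of Theorem~\ref{Tinvert} — rather than merely in $\kL(L_2(\R)^2)$, so that the composition with the $H^{r-1}$-valued inversion map is legitimate. Given the results already in hand this is immediate, so the proof is short: cite \eqref{RegcAB}, cite Theorem~\ref{Tinvert} for the mapping into ${\rm Isom}(H^{r-1}(\R)^2)$, invoke smoothness of Banach-space inversion, and conclude by the chain rule together with the boundedness of $X\mapsto X'$ and of $\Theta$.
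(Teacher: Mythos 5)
Your proof is correct and follows exactly the paper's own route: it cites \eqref{RegcAB} for the smoothness of $X\mapsto\A(X)$ in $\kL(H^{r-1}(\R)^2)$, invokes Theorem~\ref{Tinvert} to see that $1-A_\mu\A(X)$ lands in ${\rm Isom}(H^{r-1}(\R)^2)$ for $X\in\cU_r$, uses the smoothness of Banach-algebra inversion, and closes with the boundedness of $X\mapsto X'$ and multiplication by $\Theta$. The paper's proof is a one-line version of precisely this argument, so there is nothing to add.
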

\begin{proof}
The claim follows from Theorem~\ref{Tinvert} and  \eqref{RegcAB}, by using also the  smoothness of the mapping that associates to an isomorphism its inverse.
\end{proof}

We now conclude with the following result.
\begin{cor}\label{Cor2}
Given $r\in(3/2,2)$, we have $\Phi\in {\rm C}^\infty(\cU_r, H^{r-1}(\R)^2).$
\end{cor}
\begin{proof}
The claim follows from \eqref{PHI}, \eqref{RegcAB}, and Lemma~\ref{L:oo}.
\end{proof}

\subsection{The Rayleigh-Taylor condition}
The Rayleigh-Taylor condition, see \cite{ST58}, is a sign restriction on the jump of the pressure gradients in normal direction at each interface and it reads
\begin{equation}\label{RT}
\begin{aligned}
 &\p_{\nu_1} (p_2-p_1)<0 \qquad\text{on $\Gamma_f^{c_\infty}$,}\\[1ex]
 &\p_{\nu_2} (p_3-p_2)<0 \qquad\text{on $\Gamma_h$.}
\end{aligned}
\end{equation}
Assuming that $X=(f,h)\in \cU_r$, we can express in view of $\eqref{eq:S1}_1$ and Theorem~\ref{T:1} the Rayleigh-Taylor condition as follows
\begin{equation}\label{RTRef}
\Theta_1+a_\mu^1\Phi_1(X)<0 \qquad\text{and}\qquad \Theta_2+a_\mu^2\Phi_2(X)<0.
\end{equation}
Let 
\begin{equation}\label{cvr}
\cV_r:=\{X\in\cU_r\,:\, \text{$\Theta_1+a_\mu^1\Phi_1(X)<0 $ and $\Theta_2+a_\mu^2\Phi_2(X)<0$}\}.
\end{equation}
Since $\Theta_i<0$, $i=1,\, 2$, and $\Phi(0)=\oo(0)=0$, it follows by using the smoothness property~\eqref{smooth} of~$\Phi$ that~$\cV_r$ is a nonempty open subset of $\cU_r$ 
(which coincides with $\cO_r$ when $A_\mu=0$).

\subsection{The generator property}

The next goal is to show that the evolution problem \eqref{NNEP} is of parabolic type in $\cV_r$, in the sense that $\p\Phi(X)$ generates an analytic semigroup in $\kL(H^{r-1}(\R)^2)$
for each~${X\in\cV_r,}$ as the next result states.
Here we use the notation from \cite{Am95}.

\begin{thm}\label{T:GP}
Given  $r\in(3/2,2)$ and $X\in \cV_r$, we have
 \begin{align*} 
 -\p\Phi(X)\in\mathcal{H}(H^{r}(\R)^2,H^{r-1}(\R)^2).
\end{align*}   
\end{thm}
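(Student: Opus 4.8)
The plan is to localize the generator property by a perturbation argument: compute the Fréchet derivative $\partial\Phi(X)$, isolate its leading-order part, show that this leading part is a generator, and then absorb all lower-order terms via standard perturbation results for $\mathcal{H}(H^r,H^{r-1})$ from \cite{Am95}. Concretely, I would first differentiate the identity $\Phi(X)=\B(X)[\oo(X)]$ using Corollary~\ref{Cor2} and Lemma~\ref{L:oo}, so that $\partial\Phi(X)[Y]=\partial\B(X)[Y][\oo(X)]+\B(X)\big[\partial\oo(X)[Y]\big]$, and differentiate \eqref{INVd} to get an equation for $\partial\oo(X)[Y]$ in terms of $Y'$ and lower-order contributions. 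The crucial observation, as in \cite{MBV18,MBV20}, is that modulo operators that map $H^r(\R)^2$ into $H^{r-1}(\R)^2$ \emph{compactly} or at least with small norm on small frequency scales (hence are lower-order in the sense of Amann's perturbation theory), the principal symbol of $\partial\Phi(X)$ at a point $X=(f,h)$ is diagonal and, using \eqref{Forab}, $\bA(0)=0$, $\bB(0)=H$, and the localization/freezing-coefficients technique, reduces to multiplication in Fourier space by a symbol of the form $-\tau_i(x)|\xi|$ on the $i$-th component, where $\tau_i(x)>0$ precisely because of the Rayleigh-Taylor condition. That is, the leading part is $\mathrm{diag}\big((\Theta_1+a_\mu^1\Phi_1(X))\,(\text{something positive})\,|\partial_x|,\ (\Theta_2+a_\mu^2\Phi_2(X))\,(\text{something positive})\,|\partial_x|\big)$ up to lower-order terms, and each diagonal entry is $-\tau_i(x)|\partial_x|$ with $\tau_i>0$ on $\cV_r$.

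The key steps, in order, would be: (1) Write $\partial\Phi(X)$ explicitly, using the product rule and the formula for $\partial\oo(X)$ obtained by implicitly differentiating $(1-A_\mu\A(X))[\oo]=\Theta X'$; note that the top-order part comes only from the terms where the derivative $Y'$ (resp. $\partial_x$ hitting $\oo$ inside the singular integrals) appears, everything else being order $\le r-1$ by Lemma~\ref{L:MP0}(ii), Lemma~\ref{L:MP1}, and the embedding $H^1\hookrightarrow H^{r-1}$. (2) Apply the standard localization procedure: fix $x_0\in\R$, use a partition of unity, freeze the coefficients $f'(x_0),h'(x_0)$, and replace the singular integral operators by their Fourier multipliers; the off-diagonal operators $S(X),S'(X),T(X),T'(X)$ contribute only smoothing terms since their kernels are nonsingular (they map $L_2$ into $H^1$ by \eqref{SSS}), so the frozen principal part is genuinely diagonal. (3) Identify the frozen principal symbol on each component; using $\bB(0)=H$ and the structure of the Rellich-type computation, one checks the $i$-th symbol equals $(\Theta_i+a_\mu^i\Phi_i(X))(x_0)$ times a strictly positive factor times $|\xi|$ — this is exactly where \eqref{RTRef} enters to give the correct sign. (4) Invoke the fact that $-c|\partial_x|$ with $c>0$ generates an analytic semigroup and lies in $\mathcal{H}(H^r,H^{r-1})$, then patch the local pieces together using \cite[I.1]{Am95} or a Fourier-multiplier/para-differential argument, and finally absorb the genuinely lower-order remainder (which has relative bound zero in the sense of Amann's perturbation theory, or is $\kL(H^{r-1})$-bounded and small as a perturbation of the generator from $H^r$ to $H^{r-1}$).

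The main obstacle I anticipate is the strong coupling between the two interface equations: unlike in the two-phase case of \cite{MBV18,MBV20}, here $\oo=(\oo_1,\oo_2)$ solves a genuinely $2\times2$ system $(1-A_\mu\A(X))[\oo]=\Theta X'$, and when one differentiates, the perturbation $\partial\oo(X)[Y]$ is determined by inverting $1-A_\mu\A(X)$, which is not a spectral problem for $\A(X)$. I would handle this by exploiting that the coupling operators $S(X),S'(X)$ (and $T(X),T'(X)$) are smoothing — they map $L_2$ into $H^1$ — so although they appear in the inverse $(1-A_\mu\A(X))^{-1}$, they do not affect the top-order symbol: the leading part of $(1-A_\mu\A(X))^{-1}$ is block-diagonal with blocks $(1-a_\mu^i\bA(f_i))^{-1}$ (writing $f_1=f$, $f_2=h$), and these are the same objects that govern the two-phase analysis. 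A second, more technical point is verifying the precise positive constant in the principal symbol and that it combines with $\Theta_i+a_\mu^i\Phi_i(X)$ in exactly the way \eqref{RTRef} requires; this requires carefully tracking the contribution of $\bB$ versus $\bA$ in the expression $\Phi_i(X)=\B_i(X)[\oo]$ and using $\A_i(X)[\oo]=\oo_i$-type trace relations at leading order. Once the principal symbol is pinned down with the correct sign on $\cV_r$, the generator property follows from the established parabolic machinery, so the analytic-semigroup conclusion and hence \eqref{Gen} is immediate.
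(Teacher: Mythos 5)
Your proposal is correct and takes essentially the same route as the paper: $\p\Phi(X)$ is a $2\times 2$ block operator, the coupling operators $S,S',T,T'$ together with $\p\oo(X)$ are smoothing so the off-diagonal entry $\p_h\Phi_1(X)$ is lower order in the sense of \eqref{LOW1}, the diagonal entries are localized and compared to Fourier multipliers whose leading coefficient is a positive function times the Rayleigh--Taylor quantity, and the pieces are reassembled via \cite[Theorem~I.1.6.1]{Am95}. One device worth making explicit that you leave implicit is the homotopy $\Psi(\tau)$ of \eqref{pathpsi}--\eqref{pathwtf} with the artificial term $(1-\tau)a_\mu^1u'\Phi_1(X)$, which keeps the frozen coefficient $\alpha_\tau$ of \eqref{defalphabeta} proportional to the Rayleigh--Taylor quantity for all $\tau\in[0,1]$ and thereby lets the method of continuity upgrade the uniform resolvent estimate \eqref{KDED} to invertibility of $\lambda-\p_f\Phi_1(X)$, starting from the explicitly invertible endpoint $\Psi(0)=H\circ(\Theta_1+a_\mu^1\Phi_1(X))\,\tfrac{d}{dx}$; merely ``absorbing a lower-order remainder'' does not by itself supply this invertible endpoint.
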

\begin{proof}
Follows from    \cite[Theorem~I.1.6.1]{Am95}, the estimate \eqref{LOW1},  and  Theorem~\ref{T:GP12} below.
\end{proof}

In order to established the results used in the proof of Theorem~\ref{T:GP},  we fix $r\in(3/2,2)$  and~${X=(f,h)\in\cV_r}$, and we set 
\begin{equation}\label{fixedom}
\oo:=(\oo_1,\oo_2)\coloneqq\oo(X),
\end{equation}
see Lemma~\ref{L:oo}. 
The Fr\'echet derivative~$\p\Phi(X)$ can be represented as a matrix operator
\begin{align*}
\p\Phi(X)
=\begin{pmatrix}
\p_f \Phi_1(X)&\p_h \Phi_1(X)\\[1ex]
\p_f \Phi_2(X)&\p_h \Phi_2(X)
\end{pmatrix}\in\kL(H^{r}(\R)^2,H^{r-1}(\R)^2).
\end{align*}
Our strategy is to show that both diagonal entries are analytic generators, see Theorem~\ref{T:GP12} below, while the off-diagonal operator  $\p_h \Phi_1(X)$ 
is a lower order operator in the sense of the estimate~\eqref{LOW1} below. 
These properties together with the  classical result \cite[Theorem~I.1.6.1]{Am95} then lead to the claim of Theorem~\ref{T:GP}.

We first consider the off-diagonal entry $\p_h \Phi_1(X)$ and we prove that it is a lower order operator.
Let therefore~${r'\in(3/2,r)}$ be fixed.
Recalling \eqref{OpAB}, we then compute
\begin{equation}\label{eqofd}
\p_h \Phi_1(X)[v]=\p_h\B_1(X)[v][\oo]+\bB(f)[\p_h\oo_1(X)[v]] +T(X)[\p_h\oo_2(X)[v]]
\end{equation}
for $ v\in H^{r}(\R) $, where, in view of $\eqref{ForAB}_3$, we have
\begin{equation}\label{For111}
\begin{aligned}
\p_h\B_1(X)[v][\oo]&=\p_hD_1(X)[v][\oo_2]+(c_\infty+f)f'\p_hC_1(X)[v][\oo_2]\\[1ex]
&\hspace{0,45cm}-f'\p_hC_1(X)[v][h\oo_2]-f'C_1(X)[v\oo_2]
\end{aligned}
\end{equation}
for all $v\in H^{r}(\R)$.
Moreover, differentiation of the first component of   \eqref{INVd} with respect to $h$  leads, in view of \eqref{OpAB} and $\eqref{ForAB}_1$, to
\begin{equation}\label{3.18}
\begin{aligned}
(1-a_\mu^1\bA(f))[\p_h\oo_1(X)[v]]&=a_\mu^1\big(S(X)[\p_h\oo_2(X)[v]] +f'\p_hD_1(X)[v][\oo_2]+C_1(X)[v\oo_2]\\[1ex]
&\hspace{1cm}+\p_hC_1(X)[v][h\oo_2]-(c_\infty+f)\p_hC_1(X)[v][\oo_2]\big).
\end{aligned}
\end{equation}
Before proceeding with the estimates, we recall from Lemma~\ref{L:MP1} (with $r=r'$) that 
\begin{equation}\label{lowreg1}
\p_hE_1(X)\in\kL(H^{r'}(\R),\kL(L_2(\R), H^1(\R)))\qquad\text{for $E\in\{C, \,C',\, D,\, D'\}$}.
\end{equation} 
Moreover, Lemma~\ref{L:oo} (with $r=r'$)  implies  the Fr\'echet derivative $\p\oo(X)$ satisfies
\begin{equation}\label{lowreg2}
\p\oo(X)\in\kL(H^{r'}(\R)^2, H^{r'-1}(\R)^2).
\end{equation} 
Using \eqref{lowreg1} and  Lemma~\ref{L:MP1}, we infer from \eqref{For111} that
\begin{equation}\label{est1}
\begin{aligned}
\|\p_h\B_1(X)[v][\oo]\|_{H^{r-1}}&\leq C(\|\p_hD_1(X)[v][\oo_2]\|_{H^1}+\|\p_hC_1(X)[v][\oo_2]\|_{H^1}\\[1ex]
&\hspace{1cm}+\|\p_hC_1(X)[v][h\oo_2]\|_{H^1}+\|C_1(X)[v\oo_2]\|_{H^1})\\[1ex]
&\leq C\|v\|_{H^{r'}}.
\end{aligned}
\end{equation}
Additionally, $\eqref{ForAB}_3$, Lemma~\ref{L:MP1},  and \eqref{lowreg2} lead to
\begin{equation}\label{est2}
\begin{aligned}
\|T(X)[\p_h\oo_2(X)[v]]\|_{H^{r-1}}&\leq C(\| D_1(X)[\p_h\oo_2(X)[v]]\|_{H^1}+\| C_1(X)[\p_h\oo_2(X)[v]]\|_{H^1}\\[1ex]
&\hspace{1cm}+\|C_1(X)[h\p_h\oo_2(X)[v]]\|_{H^1})\\[1ex]
&\leq C\|\p_h\oo_2(X)[v]\|_{L_2}\\[1ex]
&\leq C\|\p_h\oo_2(X)[v]\|_{H^{r'-1}}\\[1ex]
&\leq C\|v\|_{H^{r'}}.
\end{aligned}
\end{equation}
Arguing similarly as above, it also holds that the right hand-side of \eqref{3.18} can be estimated by the same quantity, hence 
$$\|(1-a_\mu^1\bA(f))[\p_h\oo_1(X)[v]]\|_{H^{r-1}}\leq C\|v\|_{H^{r'}},$$
 and the isomorphism property~$\eqref{isom}_2$ (with~${u=f}$ and ${\lambda=1/a_\mu^1}$) now yields
\begin{align*}
\|\p_h\oo_1(X)[v]\|_{H^{r-1}}\leq C\|v\|_{H^{r'}}.
\end{align*}
Combining this property  with~\eqref{regAB}, we obtain
\begin{equation}\label{est3}
\|\bB(f)[\p_h\oo_1(X)[v]]\|_{H^{r-1}}\leq C\|\p_h\oo_1(X)[v]\|_{H^{r-1}}\leq C\|v\|_{H^{r'}}.
\end{equation}
 Gathering \eqref{eqofd} and \eqref{est1}-\eqref{est3}, we conclude that 
\begin{equation*}
\|\p_h \Phi_1(X)[v]\|_{H^{r-1}}\leq  C\|v\|_{H^{r'}}\qquad\text{for all $v\in H^{r}(\R)$.}
\end{equation*}
 Young's inequality implies now that for each constant $\nu>0$  there exists a constant~$C(\nu)>0$ such that 
\begin{equation}\label{LOW1}
\|\p_h \Phi_1(X)[v]\|_{H^{r-1}}\leq \nu\|v\|_{H^{r}}+C(\nu) \|v\|_{H^{r-1}}\qquad\text{for all $v\in H^{r}(\R)$.}
\end{equation}

The estimate \eqref{LOW1} is the first ingredient in the proof of Theorem~\ref{T:GP}.
We   prove  below that the diagonal entries of $\p\Phi(X)$ are both analytic generators.
As the next result shows, the generator property for $ \p_f\Phi_1(X)$ is established when merely assuming that  the Rayleigh-Taylor condition is satisfied at the interface $\Gamma_f^{c_\infty}$, respectively 
the generation property for $ \p_h\Phi_2(X)$ uses only  the Rayleigh-Taylor condition on $\Gamma_h$. 
\begin{thm}\label{T:GP12}\phantom{a}

\begin{itemize}
\item[{\rm (i)}] Assume that $\Theta_1+a_\mu^1\Phi_1(X)<0$. Then $-\p_f\Phi_1(X)\in\mathcal{H}(H^{r}(\R),H^{r-1}(\R))$.\\[-1ex]
\item[{\rm (ii)}] Assume that $\Theta_2+a_\mu^2\Phi_2(X)<0$. Then $-\p_h\Phi_2(X)\in\mathcal{H}(H^{r}(\R),H^{r-1}(\R))$.
\end{itemize}
\end{thm}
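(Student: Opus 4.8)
The plan is to reduce both assertions to a single model problem by a localization and freezing-of-coefficients argument, and then identify the frozen operator as a Fourier multiplier whose symbol generates an analytic semigroup precisely when the Rayleigh--Taylor sign holds. I treat (i) in detail; (ii) is entirely analogous with $h$ in place of $f$, $\Theta_2$ in place of $\Theta_1$, $a_\mu^2$ in place of $a_\mu^1$, and the interface $\Gamma_h$ in place of $\Gamma_f^{c_\infty}$.

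First I would compute $\p_f\Phi_1(X)$ explicitly. Since $\Phi_1(X)=\B_1(X)[\oo]$ with $\oo=\Theta(1-A_\mu\A(X))^{-1}[X']$, the chain rule gives
\begin{equation*}
\p_f\Phi_1(X)[v]=\p_f\B_1(X)[v][\oo]+\bB(f)[\p_f\oo_1(X)[v]]+T(X)[\p_f\oo_2(X)[v]],
\end{equation*}
and, differentiating \eqref{INVd} in $f$, an identity for $(1-a_\mu^1\bA(f))[\p_f\oo_1(X)[v]]$ in terms of $\A_1(X)$, $S(X)$, the $C_1,D_1$ operators and their $f$-derivatives, together with the term $\Theta_1 v'$ coming from $\p_f(X')=(v',0)$. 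The key point is that, exactly as in the treatment of the off-diagonal entry leading to \eqref{LOW1}, every contribution in which $v$ is \emph{not} differentiated — i.e. every occurrence through the $C_m,C_m',D_m,D_m'$ operators, through $\bA(f),\bB(f)$ acting via Lemma~\ref{L:MP0}(ii), or through $\p_f$ of those operators — maps $H^r(\R)$ into $H^{r-1}(\R)$ with a bound by $\|v\|_{H^{r'}}$ for some $r'\in(3/2,r)$, hence is a lower-order perturbation in the sense of Young's inequality. The only genuinely principal part is the one carrying $v'$, which after inverting $1-a_\mu^1\bA(f)$ and composing with $\bB(f)$ produces, modulo lower order, the operator $v\mapsto (\Theta_1+a_\mu^1\Phi_1(X))\,\bB(f)(1-a_\mu^1\bA(f))^{-1}[\,\cdot\,]$ applied to (a localized) $v'$. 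Here one uses that the coefficient $\Theta_1+a_\mu^1\Phi_1(X)$ is the Rayleigh--Taylor function, which is strictly negative and, by the smoothness \eqref{smooth} and the algebra property of $H^{r-1}(\R)$, a multiplier on $H^{r-1}(\R)$.

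Next I would localize and freeze coefficients. Fix $x_0\in\R$, let $\tau=\tau(x_0):=\Theta_1+a_\mu^1\Phi_1(X)(x_0)<0$ and $\beta=\beta(x_0):=f'(x_0)$. Using the commutator estimate Lemma~\ref{L:MP0}(ii) and $\bA(0)=0$, $\bB(0)=H$, one shows that in a small neighbourhood of $x_0$ the principal part of $\p_f\Phi_1(X)$ agrees, up to operators of order $<r$, with the constant-coefficient operator
\begin{equation*}
\A_{x_0}[v]:=\tau\,\frac{1}{1+\beta^2}\,\big(H\p_x + \beta\,\p_x\big)[v],
\end{equation*}
whose Fourier symbol is $\tau(1+\beta^2)^{-1}\bigl(|\xi| + i\beta\xi\bigr)$ (using $\wh{H\p_x v}(\xi)=|\xi|\,\wh v(\xi)$ up to sign conventions). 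Since $\tau<0$ and $\tau(1+\beta^2)^{-1}>0$ is bounded away from $0$ and $\infty$ uniformly in $x_0$, this symbol has real part $\le -c|\xi|$ with $c>0$ uniform in $x_0$, so $-\A_{x_0}\in\mathcal{H}(H^r(\R),H^{r-1}(\R))$ with uniform constants; this is the standard Fourier-multiplier computation.

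Finally I would assemble the pieces by a partition of unity: covering $\R$ (and the point at infinity, where $f'\to0$ so $\beta=0$) by finitely many patches, writing $\p_f\Phi_1(X)=\sum_j \varphi_j\,\A_{x_j}\,\psi_j + \mathcal{R}$ where $\psi_j\equiv1$ on $\supp\varphi_j$, the remainder $\mathcal{R}\in\kL(H^r(\R),H^{r-1}(\R))$ being of lower order in the $\nu\|v\|_{H^r}+C(\nu)\|v\|_{H^{r-1}}$ sense, one invokes the perturbation result \cite[Theorem~I.1.6.1]{Am95} (exactly as in the proof of Theorem~\ref{T:GP}) to conclude $-\p_f\Phi_1(X)\in\mathcal{H}(H^r(\R),H^{r-1}(\R))$. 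The main obstacle, I expect, is the bookkeeping in the first two steps: isolating the single principal term out of the many terms generated by differentiating the nonlinear, nonlocal expression $\B(X)[(1-A_\mu\A(X))^{-1}X']$ in $f$, and verifying rigorously via Lemma~\ref{L:MP0}(ii) that all the remaining terms — including those arising from $\p_f$ of the inverse $(1-A_\mu\A(X))^{-1}$ — are of lower order; everything after that is the established localization/freezing machinery together with the elementary symbol computation. This mirrors what is presumably the proof strategy of \cite{MBV18,MBV20} for the two-phase problem, the new feature being only the matrix coefficient $A_\mu$, which here has already been tamed because $1-a_\mu^1\bA(f)$ is invertible by $\eqref{isom}_2$.
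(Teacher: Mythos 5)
The overall strategy you propose — localize, freeze coefficients, and identify the frozen operators as Fourier multipliers whose symbol has negative real part because of the Rayleigh--Taylor sign — is indeed the paper's approach (Theorem~\ref{T:AP} and the multiplier estimates \eqref{L:FM1}--\eqref{L:FM2}). Your identification of the $|\xi|$-coefficient as $\alpha_1(x_0)=(\Theta_1+a_\mu^1\Phi_1(X))(x_0)/(1+f'^2(x_0))$ matches the paper's $\alpha_\tau$ in \eqref{defalphabeta} at $\tau=1$. But there are two genuine gaps.

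First, your claim that ``the only genuinely principal part is the one carrying $v'$'', producing $(\Theta_1+a_\mu^1\Phi_1(X))\,\bB(f)(1-a_\mu^1\bA(f))^{-1}$ applied to $v'$, is too coarse. The Fr\'echet derivatives $\p\bA(f)[u][\oo_1]$ and $\p\bB(f)[u][\oo_1]$ are themselves \emph{first-order} nonlocal operators in $u$ — see \eqref{derrepA}, where they produce terms of the type $B^0_{n,m}(f)[u']$ multiplied by $\oo_1$ — and these contribute to the principal symbol. As a consequence your frozen imaginary coefficient $\beta(x_0)=f'(x_0)(\Theta_1+a_\mu^1\Phi_1(X))(x_0)/(1+f'^2(x_0))$ does not agree with the correct one $\beta_1(x_0)=B_{1,1}^0(f)[\oo_1](x_0)+a_1(X)(x_0)+a_\mu^1\oo_1(x_0)/(1+f'^2(x_0))$ from \eqref{defalphabeta}; working from the relations used in Step~4 of the proof of Theorem~\ref{T:AP}, the two differ by $(1-(a_\mu^1)^2)\oo_1/(a_\mu^1(1+f'^2))$ up to the $1/a_\mu^1$ renormalization, which is a genuine discrepancy. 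An incorrect first-order term in $\A_{x_0}$ cannot be absorbed into the lower-order remainder in \eqref{D1}, so the localization estimate you rely on would fail.

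Second, and more fundamentally, the closing step does not close. You invoke \cite[Theorem~I.1.6.1]{Am95} ``as in the proof of Theorem~\ref{T:GP}'', but that theorem perturbs an operator already known to be a generator by a lower-order one; in Theorem~\ref{T:GP} it is applied \emph{after} Theorem~\ref{T:GP12} has supplied the diagonal generators. Here, the localized resolvent estimates give the a priori bound \eqref{KDED}, i.e. injectivity plus closed range of $\lambda-\p_f\Phi_1(X)$ for $\re\lambda\geq\omega$, but not surjectivity, and $\p_f\Phi_1(X)$ is not $\lambda$-relatively bounded with small relative bound on $H^{r-1}$ so surjectivity does not come for free at large $\lambda$. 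The paper supplies it via the homotopy $\Psi(\tau)$ in \eqref{pathpsi}--\eqref{pathwtf}: the artificial term $(1-\tau)u'\Phi_1(X)$ in \eqref{pathwtf} makes $\Psi(0)=H\circ[(\Theta_1+a_\mu^1\Phi_1(X))\,d/dx]$, for which $\omega-\Psi(0)\in{\rm Isom}(H^r(\R),H^{r-1}(\R))$ is already known from \cite[Proposition~1]{AM21x}, and the method of continuity \cite[Proposition~I.1.1.1]{Am95} then carries invertibility along $\tau\in[0,1]$ to $\Psi(1)=\p_f\Phi_1(X)$. Without such a device — a homotopy to an explicitly invertible endpoint, or an equivalent Neumann-series/degree argument — your estimates do not upgrade to the generator property.
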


The proof of Theorem~\ref{T:GP12} is postponed to the end of the section as it requires some  preparation. 
To start, we differentiate \eqref{PHI} with respect to $f$ to arrive, in view of \eqref{OpAB}, at the formula 
\begin{equation}\label{derphi1f}
\p_f \Phi_1(X)[u]=\p_f\B_1(X)[u][\oo]+\bB(f)[\p_f\oo_1(X)[u]] +T(X)[\p_f\oo_2(X)[u]]
\end{equation}
for $ u \in H^{r}(\R)$, where, in view of $\eqref{ForAB}_3$, we have
\begin{equation}\label{derb1f}
\p_f\B_1(X)[u][\oo]=\p\bB(f)[u][\oo_1]+a_1(X)u'+ T_{\rm lot}^1[u],
\end{equation}
with
\begin{align*}
a_1(X)&\coloneqq (c_\infty+f)C_1(X)[\oo_2]-C_1(X)[h\oo_2]\in H^1(\R),\\[1ex]
T_{\rm lot}^1[u]&\coloneqq \p_fD_1(X)[u][\oo_2] +(c_\infty+f)f'\p_fC_1(X)[u][\oo_2]+ uf'C_1(X)[\oo_2]-f'\p_fC_1(X)[u][h\oo_2].
\end{align*}
The term  $T_{\rm lot}^1[u]$ is a sum of lower order terms  since \eqref{lowreg1} and  Lemma~\ref{L:MP1}  imply 
\begin{equation}\label{estf1}
\|T_{\rm lot}^1[u]\|_{H^{r-1}}\leq C\|u\|_{H^{r'}} \qquad\text{for all $u\in H^r(\R)$.}
\end{equation}
We next differentiate the first component of   \eqref{INVd} with respect to $f$ to obtain, in view of~$\eqref{ForAB}_1$, that
\begin{equation}\label{dera1f}
(1-a_\mu^1\bA(f))[\p_f\oo_1(X)[u]]=\Theta_1u'+a_\mu^1\big(\p\bA(f)[u][\oo_1]+a_2(X)u'+ T_{\rm lot}^2[u]\big),
\end{equation}
where
\begin{align*}
a_2(X)&\coloneqq  D_1(X)[\oo_2]\in H^1(\R),\\[1ex]
T_{\rm lot}^2[u]&\coloneqq S(X)[\p_f\oo_2(f)[u]]+f'\p_fD_1(X)[u][\oo_2] -(c_\infty+f)\p_fC_1(X)[u][\oo_2]\\[1ex]
&\hspace{0.5cm}- uC_1(X)[\oo_2]+\p_fC_1(X)[u][h\oo_2].
\end{align*}
Also $T_{\rm lot}^2[u]$ is a sum of lower order terms since \eqref{SSS}, \eqref{lowreg1}, \eqref{lowreg2}, and  Lemma~\ref{L:MP1} combined yield
\begin{equation}\label{estf2}
\|T_{\rm lot}^2[u]\|_{H^{r-1}}\leq C\|u\|_{H^{r'}}\qquad\text{for all $u\in H^r(\R)$.}
\end{equation}

We now consider the continuous path $\Psi:[0,1]\to \kL(H^r(\R), H^{r-1}(\R))$ defined by
\begin{equation}\label{pathpsi}
\Psi(\tau):=\tau\p_f\B_1(X)[u][\oo]+\bB(\tau f)[w_1(\tau)[u]] +\tau T(X)[\p_f\oo_2(X)[u]]
\end{equation}
where $w_1:[0,1]\to\kL(H^r(\R), H^{r-1}(\R))$ is also a  continuous path which is given by
\begin{equation}\label{pathwtf}
\begin{aligned}
(1-a_\mu^1\bA(\tau f))[w_1(\tau)[u]]&=\Theta_1 u'+ a_\mu^1\big(\tau\p\bA(f)[u][\oo_1]+\tau a_2(X)u'\\[1ex]
&\hspace{2.27cm}+ \tau T_{\rm lot}^2[u]+(1-\tau)u'\Phi_1(X)\big).
\end{aligned}
\end{equation}

With respect to the definitions \eqref{pathpsi} and \eqref{pathwtf} we include the following remarks.

\begin{rem}\label{R:2}\phantom{a}

\begin{itemize}
\item[(i)] If $\tau=1$, then $w_1(1)=\partial_f\overline{\omega}_1(X)$ and  ${\Psi(1)=\partial_f\Phi_1(X).}$\\[-2ex]
\item[(ii)] Letting $H$ denote the Hilbert transform,  we have 
\[
w_1(0)=(\Theta_1+a_\mu^1\Phi_1(X))\frac{d}{dx} \qquad\text{and}\qquad \Psi(0)=H[w_1(0)].
\] 
It is worthwhile to point out  that  the term  $(1-\tau)a_\mu^1 u'\Phi_1(X)$ in the definition~\eqref{pathwtf} is a term introduced artificially and is very important for the following facts. 
When $\tau=0$, we obtain due to this term a negative coefficient function -- which is exactly the function from the Rayleigh-Taylor condition on $\Gamma_f^{c_\infty}$ -- for the differential operator $w_1(0)$.
This aspect is important when establishing the invertibility of $\lambda-\partial_f\Phi_1(X)$ for sufficiently large  $\lambda$, see the proof of Theorem~\ref{T:GP12} below.  
Besides, below we localize the operator $\Psi(\tau)$ and show that it can locally approximated by certain Fourier multipliers, see Theorem~\ref{T:AP}.
Thanks to this artificial  term the Fourier multipliers have a coefficient which is the product of a positive function with the 
 function from the Rayleigh-Taylor condition on $\Gamma_f^{c_\infty}$ (both frozen at a certain point), see~\eqref{defAjtau}-\eqref{defalphabeta} below.
These features enable us to show that the Fourier multipliers are generators of analytic semigroups and that they satisfy certain uniform estimates, see~\eqref{L:FM1}-\eqref{L:FM2} below.
\item[(iii)] The Hilbert transform $H$ is the Fourier multiplier with symbol  $[\xi\mapsto-i\,{\rm sign}(\xi)]$ and moreover ${H\circ(d/dx)=(- d^2/dx^2)^{1/2}.}$
\item[(iv)] The properties $\eqref{isom}_2$ and \eqref{RegAB}  (both with $r=r'$) together with \eqref{estf2} and \eqref{pathwtf} imply there exists a constant $C>0$ such that 
\begin{align}\label{we1}
\|w_1(\tau)[u]\|_{H^{r'-1}}\leq C\|u\|_{H^{r'}}, \qquad u\in H^{r'}(\mathbb{R}),\, \tau\in[0,1].
\end{align}
\end{itemize}
\end{rem} 

As a further step we   locally approximate in Theorem~\ref{T:AP} below the operator  $\Psi(\tau)$,  $\tau\in[0,1]$,   by  certain Fourier multipliers~$\bA_{j,\tau}$.
To this end we associate to each given $\e\in(0,1)$, a positive integer $N=N(\e)$ and a so-called finite~$\e$-localization family
\[\{(\pi_j^\e,x_j^\e)\,:\, -N+1\leq j\leq N\}\]
 such that
\begin{align*}
\bullet\,\,\,\, \,\,&\text{$\pi_j^\e\in {\rm C}^\infty(\mathbb{R},[0,1]),$ $-N+1\leq j\leq N$, and $\sum_{j=-N+1}^N(\pi_j^\e)^2=1;$}\\[1ex]
\bullet\,\,\,\, \,\,  & \text{$ \supp \pi_j^\e $ is an interval of length $\e$ for all $|j|\leq N-1$  and $ \supp \pi_{N}^\e\subset\{|x|\geq 1/\e\}$;} \\[1ex]
\bullet\,\,\,\, \,\, &\text{ $ \pi_j^\e\cdot  \pi_l^\e=0$ if $[|j-l|\geq2, \max\{|j|, |l|\}\leq N-1]$ or $[|l|\leq N-2, j=N];$} \\[1ex]
 \bullet\,\,\,\, \,\, &\text{$\|(\pi_j^\e)^{(k)}\|_\infty\leq C\e^{-k}$ for all $ k\in\N, -N+1\leq j\leq N$;} \\[1ex]
 \bullet\,\,\,\, \,\, &x_j^\e\in\supp\pi_j^\e,\; |j|\leq N-1. 
 \end{align*} 
 The  number $x_N^\e$ plays no role in the analysis.
Corresponding to each  $\e$-localization family we define a  norm on $H^s(\mathbb{R}),$ $s\geq 0$, which is  equivalent to the standard  norm.
Indeed,   given~${s\geq0}$ and~$\e\in(0,1)$, there exists a constant $c=c(\e,s)\in(0,1)$ such that
\begin{align}\label{EQNO}
c\|u\|_{H^s}\leq \sum_{j=-N+1}^N\|\pi_j^\e u\|_{H^s}\leq c^{-1}\|u\|_{H^s},\qquad u\in H^s(\mathbb{R}).
\end{align}
 
The Fourier multipliers~$\bA_{j,\tau}$ mentioned above  are defined by
\begin{equation} \label{defAjtau}
 \begin{aligned} 
\bA_{j,\tau }&:=\bA_{j,\tau}^\e:=\alpha_\tau(x_j^\e) \Big(-\frac{d^2}{d x^2}\Big)^{1/2}+\beta_\tau (x_j^\e)\frac{d}{d x}, \quad |j|\leq N-1,\\
      \bA_{N,\tau }&:=\bA_{N,\tau }^\e:= \Theta_1\Big(-\frac{d^2}{d x^2}\Big)^{1/2},
 \end{aligned}
 \end{equation}
 where
  \begin{equation}\label{defalphabeta}
 \alpha_\tau:=\frac{1+(1-\tau)f'^2}{1+f'^2}(\Theta_1+a_\mu^1\Phi_1(X)), \qquad  \beta_\tau:=\tau B_{1,1}^0(f)[\oo_1]+\tau a_1(X)+\frac{\tau a_\mu^1\oo_1}{1+f'^2}.  
 \end{equation}

The next results provides an estimate for the localization error and is the main step in the proof of Theorem~\ref{T:GP12}~(i). 
Here we closely follow the proof of \cite[Theorem 7]{AM21x} and  we benefit from the fact that the operators $\bA(f)$ and $\bB(f)$ and their Fr\'echet derivatives have been  already localized there.

\begin{thm}\label{T:AP} 
Let $\mu>0$ be given and fix $r'\in (3/2,r)$. 
Then, there exist $\e\in(0,1)$ and  a positive constant $K=K(\e)$ such that 
 \begin{equation}\label{D1}
  \|\pi_j^\e \Psi(\tau) [u]-\bA_{j,\tau}[\pi^\e_j u]\|_{H^{r-1}}\leq \mu \|\pi_j^\e u\|_{H^r}+K\|  u\|_{H^{r'}}
 \end{equation}
 for all $-N+1\leq j\leq N$, $\tau\in[0,1],$  and  $u\in H^r(\mathbb{R})$. 
\end{thm}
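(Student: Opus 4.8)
The plan is to reduce the estimate \eqref{D1} to localization estimates that are already available in the literature for the single-phase operators $\bA(f)$, $\bB(f)$, and their Fréchet derivatives, plus routine lower-order bounds for the coupling operators. First I would decompose $\Psi(\tau)[u]$ according to the three summands in \eqref{pathpsi}. By \eqref{est1}, the $\tau$-dependent prefactors in \eqref{For111}, and Lemma~\ref{L:MP1}, the term $\tau\,\p_f\B_1(X)[u][\oo]$ equals $\tau\p\bB(f)[u][\oo_1]+\tau a_1(X)u'$ modulo a remainder controlled by $C\|u\|_{H^{r'}}$, cf. \eqref{derb1f}--\eqref{estf1}; and the term $\tau T(X)[\p_f\oo_2(X)[u]]$ is \emph{entirely} of lower order, bounded by $C\|u\|_{H^{r'}}$, by the same argument as in \eqref{est2}. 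So the only ``principal'' contributions come from $\tau\p\bB(f)[u][\oo_1]$, from $\tau a_1(X)u'$, and from $\bB(\tau f)[w_1(\tau)[u]]$; everything else can be absorbed into the $K\|u\|_{H^{r'}}$ term on the right of \eqref{D1} after multiplying by $\pi_j^\e$ and using \eqref{EQNO}, and then, if necessary, trading a small power of $H^r$ for $H^{r'}$ via interpolation and Young's inequality.

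Next I would handle the genuinely principal pieces using the localization results of \cite{AM21x}. Writing $w_1(\tau)[u]$ via \eqref{pathwtf} and $\eqref{isom}_2$, we have $w_1(\tau)[u]=(1-a_\mu^1\bA(\tau f))^{-1}[\Theta_1u'+\cdots]$, where the bracket contains $\tau a_\mu^1\p\bA(f)[u][\oo_1]$, $\tau a_\mu^1 a_2(X)u'$, the artificial term $(1-\tau)a_\mu^1 u'\Phi_1(X)$, and a lower-order remainder $\tau a_\mu^1 T_{\rm lot}^2[u]$ bounded by $C\|u\|_{H^{r'}}$ via \eqref{estf2}. Composing with $\bB(\tau f)$ and inserting the cutoff $\pi_j^\e$, the point is that $\pi_j^\e\bB(\tau f)[\cdot]$ and $\pi_j^\e\p\bB(f)[u][\cdot]$, $\pi_j^\e\p\bA(f)[u][\cdot]$, and $\pi_j^\e(1-a_\mu^1\bA(\tau f))^{-1}[\cdot]$ are, up to an error of size $\mu\|\pi_j^\e(\cdot)\|_{H^r}+K\|\cdot\|_{H^{r'}}$, the Fourier multipliers obtained by freezing the relevant coefficient functions ($f'$, $\oo_1$, and $\Phi_1(X)$, $a_1(X)$) at $x_j^\e$ — precisely the content of the localization lemmas underlying \cite[Theorem~7]{AM21x}. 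Assembling these frozen-coefficient symbols and simplifying the resulting rational expression in $f'(x_j^\e)$, using $\bB(0)=H$, $H\circ(d/dx)=(-d^2/dx^2)^{1/2}$, and the definitions \eqref{defalphabeta} of $\alpha_\tau,\beta_\tau$, reproduces exactly $\bA_{j,\tau}[\pi_j^\e u]$ for $|j|\le N-1$; for $j=N$, the support condition forces $f'\approx 0$ at infinity and only the $\Theta_1 u'$ term survives, giving $\bA_{N,\tau}=\Theta_1(-d^2/dx^2)^{1/2}$. Summing the errors over $-N+1\le j\le N$ and using that the localization family is finite and fixed once $\e$ is chosen yields \eqref{D1}.

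The main obstacle is bookkeeping the $\tau$-uniformity together with the error accounting: one must be careful that all the constants produced by the localization procedure — in particular the $K(\e)$ and the thresholds on $\e$ — can be chosen independently of $\tau\in[0,1]$. This is not automatic because $w_1(\tau)$ is defined through an inverse operator depending on $\tau f$, so one needs the uniform invertibility of $1-a_\mu^1\bA(\tau f)$ on $H^{r'-1}(\R)$ (which holds by $\eqref{isom}_2$ since $|1/a_\mu^1|\ge 1$, for every $\tau$) and the uniform bound \eqref{we1} on $w_1(\tau)[u]$ in $H^{r'-1}$, which then feeds the lower-order estimate for $\bB(\tau f)[w_1(\tau)[u]]$ minus its localization. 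The other delicate point is that the coefficient functions being frozen, namely $a_1(X)$, $B_{1,1}^0(f)[\oo_1]$, and $\oo_1/(1+f'^2)$, lie only in $H^{r-1}(\R)$ (not in a better space), so freezing them at $x_j^\e$ costs an $H^{r-1}$-commutator term; this is handled, as in \cite{AM21x}, by the product estimate in $H^{r-1}(\R)$ for $3/2<r<2$ together with the slack between $r$ and $r'$, so the commutator is absorbed into $K\|u\|_{H^{r'}}$. Once these uniformities are in place, the proof is a matter of collecting the cited localization estimates and adding them up.
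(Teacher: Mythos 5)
You correctly reproduce the paper's decomposition of $\Psi(\tau)$ into the three summands of \eqref{pathpsi}, the observation that $\tau T(X)[\p_f\oo_2(X)[u]]$ and $T_{\rm lot}^1[u]$ are entirely of lower order (Steps 1 and 2), the appeal to the localization results for $\p\bA(f)$, $\p\bB(f)$, and $B^0_{n,m}(f)$ from \cite{AM21x}, and the $\tau$-uniformity concern. The real gap is in your treatment of $\bB(\tau f)[w_1(\tau)[u]]$. When one localizes $\bB(\tau f)$ via Lemma~\ref{L:AL2} (using \eqref{Forab}) and applies the result to $w_1(\tau)[u]$, the error has the form $\nu\|\pi_j^\e w_1(\tau)[u]\|_{H^{r-1}}+K\|w_1(\tau)[u]\|_{H^{r'-1}}$. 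You point out that \eqref{we1} controls the second term by $K\|u\|_{H^{r'}}$, but you do not say how the first, higher-order term is converted into the allowed $\mu\|\pi_j^\e u\|_{H^r}+K\|u\|_{H^{r'}}$. That conversion requires the separate a priori estimate
$\|\pi_j^\e w_1(\tau)[u]\|_{H^{r-1}}\leq C_0\|\pi_j^\e u\|_{H^r}+K\|u\|_{H^{r'}}$
uniformly in $\tau$ and $j$ — this is exactly \eqref{HT3}, established as Step 3a of the paper's proof by multiplying \eqref{pathwtf} by $\pi_j^\e$, commuting $\pi_j^\e$ past $\bA(\tau f)$ with Lemma~\ref{L:AL1}, and using $\eqref{isom}_2$ and \eqref{RegAB}. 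Your suggestion to instead ``localize the inverse $(1-a_\mu^1\bA(\tau f))^{-1}$'' is not a consequence of the cited localization lemmas in \cite{AM21x}, which localize only the forward operators; one would have to develop such a result separately, and the natural way to do so is essentially to prove \eqref{HT3} first. Without this step the argument does not close.

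Two further points you gloss over. First, the paper does not freeze-and-compose the inverse: having \eqref{HT3}, it uses $H^2=-\id_{H^{r-1}(\R)}$ and $\|H\|_{\kL(H^{r-1})}=1$ to reduce the estimate for $\|H[\pi_j^\e w_1(\tau)[u]]-\text{symbol}[\pi_j^\e u]\|_{H^{r-1}}$ to an estimate for $\|\pi_j^\e w_1(\tau)[u]-\text{symbol}[\pi_j^\e u]\|_{H^{r-1}}$, which is read off from \eqref{pathwtf} times $\pi_j^\e$ via the $T_1,\dots,T_4$ (resp. $T_5,T_6$) decomposition; this is a key technical device, not mere bookkeeping. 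Second, the assertion that assembling the frozen symbols ``reproduces exactly $\bA_{j,\tau}$'' conceals Step 4: the coefficient of $(-d^2/dx^2)^{1/2}$ that falls out of the localization is $\Theta_1-\tfrac{\tau\oo_1 f'}{1+f'^2}+\tau a_\mu^1 a_2(X)+(1-\tau)a_\mu^1\Phi_1(X)+\tau a_\mu^1 B_{0,1}^0(f)[\oo_1]$, and one must substitute the explicit formulas for $\oo_1$ and $\Phi_1(X)$ (derived from \eqref{INVd}, \eqref{Forab}, \eqref{ForAB}) to transform this into the $\alpha_\tau$ of \eqref{defalphabeta}; it is this identity that makes the Rayleigh--Taylor sign visible.
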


Before proving Theorem~\ref{T:AP} we first present some auxiliary results used in the proof.
We start with an estimate for the commutator  $[B_{n,m}^0(f),\varphi]$.

\begin{lemma}\label{L:AL1} 
Let $n,\, m \in \N$,  $r\in(3/2, 2)$, $f\in H^r(\R)$, and  ${\varphi\in {\rm C}^1(\R)}$ with uniformly continuous derivative $\varphi'$ be given. 
Then, there exists  a constant $K$ that depends only on $ n,$ $m, $ $\|\varphi'\|_\infty, $ and~$\|f\|_{H^r}$  such that 
 \begin{equation*} 
  \|\varphi B_{n,m}^0(f)[\oo]- B_{n,m}^0(f)[ \varphi \oo]\|_{H^{1}}\leq K\| \oo\|_{2}
 \end{equation*}
for all   $\oo\in L_2(\R)$.
\end{lemma}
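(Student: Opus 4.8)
The plan is to start from the explicit formula for the commutator. Subtracting the two integrals defining $B_{n,m}^0(f)[\varphi\oo]$ and $\varphi B_{n,m}^0(f)[\oo]$ as in \eqref{BNM}--\eqref{Bom}, and using that $\varphi(x)$ is constant in the integration variable, one obtains, with the shorthand \eqref{notat},
\begin{equation*}
\varphi B_{n,m}^0(f)[\oo](x)-B_{n,m}^0(f)[\varphi\oo](x)=\frac{1}{\pi}\int_{\R}\frac{\prod_{i=1}^{n}\big(\delta_{[x,s]}f/s\big)}{\prod_{i=1}^{m}\big[1+\big(\delta_{[x,s]}f/s\big)^2\big]}\cdot\frac{\delta_{[x,s]}\varphi}{s}\cdot\oo(x-s)\,ds .
\end{equation*}
The decisive point is the algebraic cancellation: since $\varphi\in{\rm C}^1(\R)$, the factor $\delta_{[x,s]}\varphi/s=\int_0^1\varphi'(x-\theta s)\,d\theta$ is bounded in modulus by $\|\varphi'\|_\infty$ (and, because $\varphi'$ is \emph{uniformly} continuous, it tends to $\varphi'(x)$ uniformly in $x$ as $s\to0$). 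Thus in the commutator the singular factor $1/s$ has been traded for a bounded one: the commutator is, up to that replacement, the operator $B_{n+1,m}(f,\dots,f)[f,\dots,f,\varphi,\,\cdot\,]$ with its kernel damped by one order, i.e. in the variable $y=x-s$ it is a kernel operator with the \emph{bounded} kernel $\frac1\pi\,G\big(\tfrac{f(x)-f(y)}{x-y}\big)\,\tfrac{\varphi(x)-\varphi(y)}{x-y}$, where $G(t):=t^n/(1+t^2)^m$.

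Next I would establish the $L_2\to H^1$ bound by controlling separately the $L_2$-norm of the commutator and of its $x$-derivative. For the $L_2$-norm, note that $G(0)=0$ when $n\geq1$, so (using $f\in H^r\hookrightarrow L_\infty$) the kernel decays at infinity like $|x-y|^{-2}$ for $n\geq2$, where Schur's lemma applies with a constant depending only on $n,m,\|f\|_{H^r},\|\varphi'\|_\infty$; for $n\in\{0,1\}$ one expands $1/(1+t^2)=1-t^2/(1+t^2)$ to peel off the leading contribution, the Hilbert transform when $n=0$ and the first Calder\'on commutator $\tfrac1\pi\PV\int_\R\tfrac{f(x)-f(y)}{(x-y)^2}\oo(y)\,dy$ when $n=1$, the remainders being again of the fast-decaying type, and the commutators of these leading operators with the multiplier $\varphi$ are controlled by the classical $L_2$-boundedness of the Hilbert transform, of Calder\'on commutators, and of their bilinear analogues, with constants governed by $\|f\|_{H^r}$ and $\|\varphi'\|_\infty$. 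For the derivative I differentiate the kernel: $\p_x$ produces either the bounded factors $f'(x)$ or $\varphi'(x)$ (with $\|f'\|_\infty\lesssim\|f\|_{H^r}$) multiplying operators of the same Calder\'on-commutator types, or it falls on $\oo(x-s)$, in which case one writes $\p_x\oo(x-s)=-\p_s\oo(x-s)$ and integrates by parts in $s$, transferring the derivative onto the kernel and staying within the same class; summation yields the claimed bound with $K=K(n,m,\|\varphi'\|_\infty,\|f\|_{H^r})$. An alternative, closer to the spirit of the paper (and presumably the route actually taken, following \cite{AM21x}, where $\bA(f),\bB(f)$ and their derivatives are already localized), is to expand the commutator directly as a finite combination of the operators $C_m,D_m$ of \eqref{CDms} — with $f$ in every slot, $\delta_{[x,s]}\varphi/s$-type weights, and the $1/s$ removed — times bounded functions, and then to read off the $L_2\to H^1$ mapping property from Lemma~\ref{L:MP1} together with \eqref{regAB}--\eqref{Forab}, inspecting those building blocks for the quantitative dependence of the constant.

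The main obstacle is that the commutator kernel, although bounded near the diagonal, does not decay at infinity when $n$ is small (it is only $O(|x-y|^{-1})$ for $n=1$, and non-decaying for $n=0$), so the $L_2$-boundedness of the commutator \emph{itself} — as opposed to that of its derivative, which gains decay after differentiation — cannot be obtained by absolute convergence or a naive Schur argument, and genuinely relies on the Calder\'on--Zygmund / Calder\'on-commutator structure (or on the reduction to the already-established mapping properties of the $B_{n,m}$- and $C_m,D_m$-families). One must also take care that only $\|\varphi'\|_\infty$ and $\|f\|_{H^r}$ enter the constant, which works because every non-decaying piece reduces to a commutator of the form $[H,\varphi]$ or a commutator of a Calder\'on commutator with $\varphi$, whose derivatives are classical singular integrals of norm $\lesssim\|\varphi'\|_\infty$; and the uniform continuity of $\varphi'$ is precisely what is needed to bound the near-diagonal part of the differentiated kernel.
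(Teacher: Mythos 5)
The paper does not prove this lemma internally: it is quoted as \cite[Lemma~12]{AM21x}, so there is no in-paper argument to compare against. Your sketch is a plausible reconstruction of a direct Calder\'on-commutator proof, and the opening observation --- that the commutator kernel is $\pi^{-1}G(\delta_{[x,s]}f/s)\cdot(\delta_{[x,s]}\varphi/s)$ with the $1/s$ singularity traded for the bounded factor $\delta\varphi/s$ --- is exactly the right starting point, as is the alternative reduction to the $C_m,D_m$ building blocks of \eqref{CDms} and Lemma~\ref{L:MP1}.

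Two points in the sketch need attention, one serious. When you estimate $\|\p_x(\varphi B_{n,m}^0(f)[\oo]-B_{n,m}^0(f)[\varphi\oo])\|_{2}$ by writing $\p_x\oo(x-s)=-\p_s\oo(x-s)$ and integrating by parts, the $s$-derivative lands on $\delta_{[x,s]}\varphi/s$ and, combined with the $\p_x$-contribution, produces the factor $\bigl(\varphi'(x)-\delta_{[x,s]}\varphi/s\bigr)/s$. Pointwise this is only $O(\omega(|s|)/|s|)$ near $s=0$ with $\omega$ the modulus of continuity of $\varphi'$, which is \emph{not} integrable for a generic uniformly continuous $\varphi'$; so one cannot simply say one ``stays within the same class.'' The argument is saved only by the algebraic splitting
\[
G(q_f)\,\frac{\varphi'(x)-q_\varphi}{s}\;=\;\varphi'(x)\,\frac{G(q_f)}{s}\;-\;\frac{G(q_f)\,\delta_{[x,s]}\varphi}{s^2},
\]
which exhibits the two pieces as $\varphi'(x)$ (a bounded multiplier) times the kernel of $B_{n,m}^0(f)$, and the kernel of a $B_{n+1,m}$-operator with $\varphi$ in one $b$-slot --- both $L_2$-bounded with constants controlled by $\|\varphi'\|_\infty$ and $\|f\|_{H^r}$. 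Without this step the constant would illegitimately depend on a modulus of continuity. Relatedly, your closing sentence that ``the uniform continuity of $\varphi'$ is precisely what is needed to bound the near-diagonal part of the differentiated kernel'' is not right: the stated bound involves only $\|\varphi'\|_\infty$, and the near-diagonal part is handled by the splitting above, not by uniform continuity. Second, for the $L_2$-norm of the commutator itself when $n\in\{0,1\}$ you invoke ``classical Calder\'on-commutator $L_2$-boundedness,'' but for $n=0$ the commutator $[\varphi,H]$ with an \emph{unbounded} Lipschitz $\varphi$ (say $\varphi(x)=x$) fails to be $L_2$-bounded (it sends $\oo$ to the constant $\pi^{-1}\int\oo$), so either the indices are implicitly $n\geq1$, or one must also track $\|\varphi\|_\infty$; in the paper's applications $\varphi$ is always one of the compactly supported cutoffs $\pi_j^\e$, so this is harmless there, but it is a genuine gap if the lemma is read at face value with $n=0$.
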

\begin{proof}
This result is established in \cite[Lemma 12]{AM21x}.
\end{proof}

The next lemmas describe how to localize the operator $B_{n,m}^0(f)$ (or the product of this operator with a $H^{r-1}$-function).
\begin{lemma}\label{L:AL2} 
Let $n,\, m \in \N$, $3/2<r'<r<2$, and  $\nu\in(0,\infty)$ be given. 
Let further~${f\in H^r(\mathbb{R})}$ and  $ a\in \{1\}\cup H^{r-1}(\mathbb{R})$.
For any sufficiently small $\e\in(0,1)$, there exists
a constant $K$ that depends only on $\e,\, n,\, m,\, \|f\|_{H^r},$ and $\|a\|_{H^{r-1}}$ (if $a\neq1$)  such that 
 \begin{equation*} 
  \Big\|\pi_j^\e a B_{n,m}^0(f)[ \oo]-\frac{a(x_j^\e)(f'(x_j^\e))^n}{[1+(f'(x_j^\e))^2]^m}H[\pi_j^\e \oo]\Big\|_{H^{r-1}}\leq \nu \|\pi_j^\e  \oo\|_{H^{r-1}}+K\| \oo\|_{H^{r'-1}} 
 \end{equation*}
for all $|j|\leq N-1$ and  $\oo\in H^{r-1}(\mathbb{R})$.
\end{lemma}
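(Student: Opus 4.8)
The plan is to freeze the coefficients of $B_{n,m}^0(f)$ at the point $x_j^\e$. I would split $a=a(x_j^\e)+(a-a(x_j^\e))$, so that
\[
\pi_j^\e a B_{n,m}^0(f)[\oo]=a(x_j^\e)\,\pi_j^\e B_{n,m}^0(f)[\oo]+\pi_j^\e\bigl(a-a(x_j^\e)\bigr)B_{n,m}^0(f)[\oo],
\]
handle the first summand through the model case $a\equiv1$, and show the second summand is a lower order term. Throughout I denote by $K$ a constant depending only on $\e,\,n,\,m,\,\|f\|_{H^r}$ and (if $a\neq1$) $\|a\|_{H^{r-1}}$, which may change from line to line, and I use freely: the embeddings $H^{r-1}(\R)\hookrightarrow C^{r-3/2}(\R)$ and $H^{r'-1}(\R)\hookrightarrow L_\infty(\R)$ (valid since $r-1>r'-1>1/2$); the product estimate $\|uv\|_{H^{r-1}}\leq C(\|u\|_\infty\|v\|_{H^{r-1}}+\|u\|_{H^{r-1}}\|v\|_\infty)$, which holds because $r-1\in(1/2,1)$; the boundedness $B_{n,m}^0(f)\in\kL(H^{r-1}(\R))\cap\kL(H^{r'-1}(\R))$ granted by Lemma~\ref{L:MP0}~(i); and the norm equivalence \eqref{EQNO}.

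\emph{The model case $a\equiv1$.} Here one must estimate $\|\pi_j^\e B_{n,m}^0(f)[\oo]-c_j\,H[\pi_j^\e\oo]\|_{H^{r-1}}$, where $c_j:=(f'(x_j^\e))^n/[1+(f'(x_j^\e))^2]^m$. First I would replace $H[\pi_j^\e\oo]$ by $\pi_j^\e H[\oo]$: the commutator $[\pi_j^\e,H]$ has the smooth Schwartz kernel $\tfrac1\pi\bigl(\pi_j^\e(x)-\pi_j^\e(y)\bigr)/(x-y)$, whose $x$-derivatives are bounded by $K$ and which decays like $|x-y|^{-1}$ off $\supp\pi_j^\e$, so a Schur/Minkowski estimate gives $[\pi_j^\e,H]\in\kL(L_2(\R),H^1(\R))$ and hence $\|[\pi_j^\e,H]\oo\|_{H^{r-1}}\leq K\|\oo\|_2$. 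It then remains to bound $\|\pi_j^\e(B_{n,m}^0(f)-c_jH)[\oo]\|_{H^{r-1}}$; writing $s=x-y$, the kernel of $B_{n,m}^0(f)-c_jH$ is $\tfrac1{\pi s}\bigl(G(\delta_{[x,s]}f/s)-G(f'(x_j^\e))\bigr)$ with $G(t):=t^n/(1+t^2)^m$. Since $\delta_{[x,s]}f/s=\int_0^1f'(x-\tau s)\,d\tau$ is bounded for all $s$ and $f'\in C^{r-3/2}$, for $x\in\supp\pi_j^\e$ one gets the near-diagonal estimate $|G(\delta_{[x,s]}f/s)-G(f'(x_j^\e))|\leq C\bigl(\e^{r-3/2}+\min\{|s|^{r-3/2},1\}\bigr)$. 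I would then split the integral at $|s|=\e$: on $|s|\leq\e$ the symbol is $O(\e^{r-3/2})$, so that piece is essentially $\e^{r-3/2}$ times a truncated singular integral of uniform $\kL(H^{r-1}(\R))$-norm, and, since $x\in\supp\pi_j^\e$ and $|s|\leq\e$ force $x-s$ into a $2\e$-neighbourhood of $\supp\pi_j^\e$, its contribution is bounded by $\nu\sum_{|l-j|\leq2}\|\pi_l^\e\oo\|_{H^{r-1}}$ once $\e$ is small; on $|s|>\e$, because $|\supp\pi_j^\e|=\e$ the point $x-s$ stays off $\supp\pi_j^\e$, the kernel is non-singular there and its $\xi$-symbol decays at high frequency (integrate by parts in $s$), whence this piece maps $H^{r'-1}(\R)$ into $H^{r-1}(\R)$ with $\e$-dependent norm and contributes $\leq K\|\oo\|_{H^{r'-1}}$. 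This is precisely the localization of the operators $B_{n,m}^0$ carried out in \cite[Appendix~C]{MP2021} (and used in \cite[proof of Theorem~7]{AM21x}), to which I would refer for the details.

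\emph{The error term.} For $\pi_j^\e(a-a(x_j^\e))B_{n,m}^0(f)[\oo]$ I would pick a fattened cut-off $\wt\pi_j^\e\in C^\infty(\R,[0,1])$ with $\wt\pi_j^\e\equiv1$ on $\supp\pi_j^\e$, $\supp\wt\pi_j^\e$ inside a $2\e$-neighbourhood of $\supp\pi_j^\e$, and $\|(\wt\pi_j^\e)^{(k)}\|_\infty\leq K$. As $\pi_j^\e\wt\pi_j^\e=\pi_j^\e$, Lemma~\ref{L:AL1} (with $\varphi=\wt\pi_j^\e$) gives
\[
\pi_j^\e\bigl(a-a(x_j^\e)\bigr)B_{n,m}^0(f)[\oo]=w\,v+R,\qquad w:=\pi_j^\e\bigl(a-a(x_j^\e)\bigr),\quad v:=B_{n,m}^0(f)[\wt\pi_j^\e\oo],
\]
with $\|R\|_{H^{r-1}}\leq K\|\oo\|_2$. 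Now $\|w\|_\infty\leq C\|a\|_{H^{r-1}}\e^{r-3/2}$ (by $H^{r-1}\hookrightarrow C^{r-3/2}$ and $|\supp\pi_j^\e|=\e$) and $\|w\|_{H^{r-1}}\leq K$; by Lemma~\ref{L:MP0}~(i), $\|v\|_{H^{r-1}}\leq K\|\wt\pi_j^\e\oo\|_{H^{r-1}}$ and, via $H^{r'-1}\hookrightarrow L_\infty$, $\|v\|_\infty\leq C\|v\|_{H^{r'-1}}\leq K\|\wt\pi_j^\e\oo\|_{H^{r'-1}}\leq K\|\oo\|_{H^{r'-1}}$. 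The product estimate then yields $\|wv\|_{H^{r-1}}\leq C\|a\|_{H^{r-1}}\e^{r-3/2}\|\wt\pi_j^\e\oo\|_{H^{r-1}}+K\|\oo\|_{H^{r'-1}}$, and bounding $\|\wt\pi_j^\e\oo\|_{H^{r-1}}\leq K\sum_{|l-j|\leq2}\|\pi_l^\e\oo\|_{H^{r-1}}$ through \eqref{EQNO} and choosing $\e$ small completes the proof. (In the applications of the lemma $j$ is summed against \eqref{EQNO}, so the neighbouring terms $\pi_{j\pm1}^\e\oo,\,\pi_{j\pm2}^\e\oo$ are harmless; with the finer bookkeeping of \cite{MP2021} one can in fact retain only $\|\pi_j^\e\oo\|_{H^{r-1}}$ on the right-hand side.)

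The main obstacle is the low regularity of $a$, which lies only in $H^{r-1}(\R)\hookrightarrow C^{r-3/2}(\R)$: the frozen factor $w=\pi_j^\e(a-a(x_j^\e))$ is $O(\e^{r-3/2})$ in $L_\infty$ but \emph{not} small in the $H^{r-1}$-multiplier norm, so the crucial step is to arrange the product $wv$ so that the full $H^{r-1}$-norm always lands on the smooth, already localized factor $v=B_{n,m}^0(f)[\wt\pi_j^\e\oo]$, while the rough factor $w$ is charged against $\|v\|_\infty$ — which, through $H^{r'-1}\hookrightarrow L_\infty$, is absorbed into the lower order term $\|\oo\|_{H^{r'-1}}$. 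The second, purely technical, difficulty is the clean separation of the near- and far-diagonal parts of $B_{n,m}^0(f)$ together with the tracking of the $\e$-dependent constants, which proceeds exactly as in \cite[Appendix~C]{MP2021} and \cite{AM21x}.
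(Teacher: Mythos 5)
The paper itself gives no proof of this lemma, deferring to \cite[Lemma~13]{AM21x}; your overall strategy --- freeze $a$ at $x_j^\e$, handle the model case $a\equiv1$ by a kernel/commutator estimate, and control the error term $\pi_j^\e(a-a(x_j^\e))B_{n,m}^0(f)[\oo]$ by playing the $L_\infty$-smallness of $a-a(x_j^\e)$ on a small interval against the $H^{r-1}$-algebra property --- is the right one and mirrors the approach in the cited reference.

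There is, however, a real gap in the commutator bookkeeping that you flag yourself: as written, both the model case and the error term produce $\sum_{|l-j|\leq 2}\|\pi_l^\e\oo\|_{H^{r-1}}$ rather than $\|\pi_j^\e\oo\|_{H^{r-1}}$, and the neighbouring pieces cannot be absorbed into the $K\|\oo\|_{H^{r'-1}}$ term, so the estimate you obtain is strictly weaker than the statement. The source of the loss is that you push the \emph{fattened} cut-off $\wt\pi_j^\e$ into the argument of $B_{n,m}^0(f)$ (and, in the model case, pull $\pi_j^\e$ out of $H$). The fix is to reverse the bookkeeping: apply Lemma~\ref{L:AL1} with $\varphi=\pi_j^\e$ directly, so that $\pi_j^\e$ passes from the outside to the inside of $B_{n,m}^0(f)$, and place the enlarged cut-off $\chi_j^\e$ (with $\pi_j^\e\chi_j^\e=\pi_j^\e$) on the \emph{coefficient} $a-a(x_j^\e)$ instead. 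Then
\begin{equation*}
\pi_j^\e\bigl(a-a(x_j^\e)\bigr)B_{n,m}^0(f)[\oo]
=\chi_j^\e\bigl(a-a(x_j^\e)\bigr)B_{n,m}^0(f)[\pi_j^\e\oo]
+\chi_j^\e\bigl(a-a(x_j^\e)\bigr)\,[\pi_j^\e,B_{n,m}^0(f)][\oo],
\end{equation*}
the second summand is bounded by $K\|\oo\|_2\leq K\|\oo\|_{H^{r'-1}}$, and the first is controlled by exactly the product estimate you invoke (now with $w=\chi_j^\e(a-a(x_j^\e))$ and $v=B_{n,m}^0(f)[\pi_j^\e\oo]$) by $C\e^{r-3/2}\|a\|_{H^{r-1}}\|\pi_j^\e\oo\|_{H^{r-1}}+K\|\oo\|_{H^{r'-1}}$, which for small $\e$ yields the stated inequality. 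The same reversal ($H[\pi_j^\e\oo]$ stays, $\pi_j^\e B_{n,m}^0(f)[\oo]\rightsquigarrow B_{n,m}^0(f)[\pi_j^\e\oo]$) should be applied in the model case so that the near-diagonal smallness estimate lands on $\pi_j^\e\oo$ rather than on $\wt\pi_j^\e\oo$.
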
  
\begin{proof}
See \cite[Lemma~13]{AM21x}. 
\end{proof}

Lemma~\ref{L:AL4} and Lemma \ref{L:AL5}   extend the result of Lemma \ref{L:AL2} to the case~$j=N$.

\begin{lemma}\label{L:AL4} 
Let $n,\, m \in \N$,  $3/2<r'<r<2$, and  $\nu\in(0,\infty)$ be given. 
Let further~${f\in H^r(\mathbb{R})}$ and  $a\in  H^{r-1}(\mathbb{R})$.
For any sufficiently small  $\e\in(0,1)$, there exists a constant~$K$ that depends only on~$\e,\, n,\, m,\, \|f\|_{H^r},$ and $\|a\|_{H^{r-1}}$ such that 
  \begin{equation*}
  \|\pi_N^\e a B_{n,m}^0(f)[\oo]\|_{H^{r-1}}\leq \nu \|\pi_N^\e \oo\|_{H^{r-1}}+K\| \oo\|_{H^{r'-1}}
 \end{equation*} 
 for all $\oo\in H^{r-1}(\mathbb{R})$.
\end{lemma}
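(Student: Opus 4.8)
The statement is the $j=N$ analogue of Lemma~\ref{L:AL2}, and I would prove it in exactly the same spirit: the localization function $\pi_N^\e$ is supported in $\{|x|\geq 1/\e\}$, where the decay of $f$ and $a$ in the Sobolev scale forces the coefficients $f'$ and $a$ to be small; this replaces the "freezing at $x_j^\e$" argument of Lemma~\ref{L:AL2}. Since $a\in H^{r-1}(\R)\hookrightarrow \mathrm{C}^{r-3/2}(\R)$ vanishes at infinity, for any prescribed $\delta>0$ we may shrink $\e$ so that $\|\pi_N^\e a\|_\infty<\delta$ (and similarly $f'$ is small on $\supp\pi_N^\e$ up to a controlled error, using $f'\in H^{r-1}(\R)$). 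The key point is that the target norm is $H^{r-1}$ with $r-1<1$, so one can afford to trade a small multiplicative constant against the operator norm of $B_{n,m}^0(f)$ on $H^{r-1}(\R)$, which is bounded in terms of $\|f\|_{H^r}$ by Lemma~\ref{L:MP0}(i) and \eqref{Bom}.

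Concretely, I would first commute $\pi_N^\e$ past $B_{n,m}^0(f)$ using the commutator estimate of Lemma~\ref{L:AL1}: since $(\pi_N^\e)'$ is bounded (with a constant depending on $\e$), we get
\[
\|\pi_N^\e a B_{n,m}^0(f)[\oo]\|_{H^{r-1}}\leq \|a\,[\pi_N^\e, B_{n,m}^0(f)][\oo]\|_{H^{r-1}}+\|a\,B_{n,m}^0(f)[\pi_N^\e\oo]\|_{H^{r-1}},
\]
where the first term is bounded by $K\|\oo\|_{2}\leq K\|\oo\|_{H^{r'-1}}$ after using the algebra property of $H^{r-1}$ (or rather $H^{r-1}\cdot H^1\hookrightarrow H^{r-1}$) to absorb the factor $a$. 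For the second term I would again pull $a$ inside as $a\,B_{n,m}^0(f)[\pi_N^\e\oo]=(\pi_N^\e a)\,B_{n,m}^0(f)[\pi_N^\e\oo]+(1-\pi_N^\e)\cdots$, but more cleanly: on $\supp\pi_N^\e$ write $a=\pi_N^\e a+(1-(\pi_N^\e)^2)$-type splitting is not needed — instead estimate directly $\|(\pi_N^\e a) B_{n,m}^0(f)[\pi_N^\e\oo]\|_{H^{r-1}}$ using that multiplication by $\pi_N^\e a$, a function in $H^{r-1}(\R)$ with small sup-norm and controlled $H^{r-1}$-norm, is a bounded operator on $H^{r-1}(\R)$ whose norm is $\le C(\|\pi_N^\e a\|_\infty+\e\|a\|_{H^{r-1}})$ or similar, by a standard paraproduct/commutator argument (this is precisely the technical estimate used already in \cite{AM21x}). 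Choosing $\e$ small enough makes this coefficient smaller than $\nu/\|B_{n,m}^0(f)\|_{\kL(H^{r-1})}$, yielding the term $\nu\|\pi_N^\e\oo\|_{H^{r-1}}$, and all remaining pieces are lower order, i.e.\ bounded by $K\|\oo\|_{H^{r'-1}}$.

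The main obstacle is the bookkeeping in the second term: one must carefully show that multiplication by $\pi_N^\e a$ on $H^{r-1}(\R)$ has operator norm controlled by $\|\pi_N^\e a\|_\infty$ up to a lower-order remainder (the naive bound $\|a\|_{H^{r-1}}$ is \emph{not} small), and similarly that the dependence of $B_{n,m}^0(f)$ on the values of $f'$ only through $\supp\pi_N^\e$ can be exploited — here one cuts off $f$ itself, replacing $f$ by $\chi f$ for a cutoff $\chi$ adapted to a neighborhood of $\supp\pi_N^\e$, and uses the Lipschitz continuity of $f\mapsto B_{n,m}^0(f)$ from Lemma~\ref{L:MP0}(i) together with $\|\chi f\|_{H^r}$ being small when $\e\to0$. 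Since this is the identical mechanism to \cite[Lemma~14]{AM21x}, I would simply invoke that reference, as the authors do for the preceding lemmas; a self-contained argument would follow the three steps above.

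\begin{proof}
See \cite[Lemma~14]{AM21x}.
\end{proof}
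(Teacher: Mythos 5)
Your proof coincides exactly with the paper's, which simply cites \cite[Lemma~14]{AM21x}, and your surrounding sketch captures the essential mechanism: since $a\in H^{r-1}(\R)\hookrightarrow {\rm C}_0(\R)$, one has $\|\pi_N^\e a\|_\infty\to 0$ as $\e\to 0$, which combined with commutator and cutoff arguments produces the small leading coefficient. One slip in the sketch: after commuting $\pi_N^\e$ into the argument the main term is $a\,B_{n,m}^0(f)[\pi_N^\e\oo]$ with coefficient $a$ rather than $\pi_N^\e a$ --- since $B_{n,m}^0(f)$ is nonlocal, applying it to $\pi_N^\e\oo$ does not yield a function supported where $\pi_N^\e=1$, so before exploiting the smallness one needs an extra relocalization step (e.g.\ inserting the auxiliary cutoff $\chi_N^\e$ and using the Moser-type estimate \eqref{MES}), as is done in \cite{AM21x}.
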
  
\begin{proof}
See  \cite[Lemma~14]{AM21x}. 
\end{proof}

Lemma \ref{L:AL5} is the counterpart of Lemma~\ref{L:AL4} in the case when $a=1$.

\begin{lemma}\label{L:AL5} 
Let $n,\, m \in \N$,  $3/2<r'<r<2$, and  $\nu\in(0,\infty)$ be given. 
Let further~${f\in H^r(\mathbb{R})}$.
For any sufficiently small  $\e\in(0,1)$, 
there exists a constant~$K$ that depends only on~${\e,\, n,\, m,}$ and~$\|f\|_{H^r}$  such that 
 \begin{equation*} 
  \|\pi_N^\e B_{0,m}^0(f)[ \oo]-H[\pi_N^\e \oo]\|_{H^{r-1}}\leq \nu \|\pi_N^\e \oo\|_{H^{r-1}}+K\| \oo\|_{H^{r'-1}}
 \end{equation*}
 and
  \begin{equation*}
  \|\pi_N^\e B_{n,m}^0(f)[\oo]\|_{H^{r-1}}\leq \nu \|\pi_N^\e \oo\|_{H^{r-1}}+K\| \oo\|_{H^{r'-1}},\qquad n\geq 1,
 \end{equation*}
 for  all $\oo\in H^{r-1}(\mathbb{R})$.
\end{lemma}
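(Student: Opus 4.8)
The plan is to run the scheme of the proof of Lemma~\ref{L:AL4}, replacing the decay of the coefficient $a$ at infinity -- unavailable here since $a\equiv1$ -- by the decay of $f$ and $f'$; recall that $f\in H^r(\R)\hookrightarrow{\rm BUC}^1(\R)$ with $f(x),f'(x)\to0$ as $|x|\to\infty$. Both displayed inequalities can be treated at once: for $n=0$ the target operator is $H=B_{0,m}^0(0)$, while for $n\ge1$ one has $B_{n,m}^0(0)=0$, so in either case it suffices to prove that $\pi_N^\e B_{n,m}^0(f)[\oo]-B_{n,m}^0(0)[\pi_N^\e\oo]$ is $\nu$-small in $H^{r-1}(\R)$ modulo a term bounded by $K\|\oo\|_{H^{r'-1}}$. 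As a first step I would apply Lemma~\ref{L:AL1} with $\varphi=\pi_N^\e$ (a smooth function with uniformly bounded derivatives): the commutator $\pi_N^\e B_{n,m}^0(f)[\oo]-B_{n,m}^0(f)[\pi_N^\e\oo]$ then lies in $H^1(\R)\hookrightarrow H^{r-1}(\R)$ with norm $\le K\|\oo\|_2\le K\|\oo\|_{H^{r'-1}}$. Setting $\psi:=\pi_N^\e\oo$, which is supported in $\{|x|\ge1/\e\}$ and satisfies $\|\psi\|_{H^{r-1}}\le C\|\oo\|_{H^{r-1}}$, the problem reduces to bounding $B_{n,m}^0(f)[\psi]-B_{n,m}^0(0)[\psi]$ in $H^{r-1}(\R)$ by $\nu\|\psi\|_{H^{r-1}}+K(\e)\|\oo\|_{H^{r'-1}}$ for $\e$ sufficiently small.

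Next I would freeze $f$ near infinity. Pick $\chi=\chi_\e\in{\rm C}^\infty(\R,[0,1])$ with $\chi\equiv1$ on $\{|x|\ge1/\e\}$, $\chi\equiv0$ on $\{|x|\le1/(2\e)\}$, and $\|\chi^{(k)}\|_\infty\le C_k\e^k$ for $k\ge1$ uniformly in $\e\in(0,1)$, and set $g:=\chi f$. Since multiplication by a bounded smooth function with bounded derivatives is continuous on every $H^s(\R)$, one has $\|g'\|_{H^{r-1}}\le\|\chi'f\|_{H^{r-1}}+\|\chi f'\|_{H^{r-1}}\to0$ as $\e\to0$: indeed $\|\chi'f\|_{H^{r-1}}\le\|\chi'f\|_{H^1}\to0$ because $\chi',\chi''$ are $O(\e)$ and $f,f'\in L_2(\R)$, while $\|\chi f'\|_{H^{r-1}}\to0$ since $f'\in H^{r-1}(\R)$ with $r-1>1/2$ (the Gagliardo mass of $\chi f'$ concentrates at infinity and the contribution carried by $\chi'$ is $O(\e^{2(r-1)})$). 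As $\|g\|_{L_2}^2\le\int_{|x|\ge1/(2\e)}f^2\,dx\to0$ and $\|u\|_{H^r}^2\simeq\|u\|_{L_2}^2+\|u'\|_{H^{r-1}}^2$ for $r\ge1$, this already yields the decisive smallness $\|g\|_{H^r}\to0$ as $\e\to0$. Granting for the moment that the replacement error satisfies $\|B_{n,m}^0(f)[\psi]-B_{n,m}^0(g)[\psi]\|_{H^{r-1}}\le\tfrac{\nu}{2}\|\psi\|_{H^{r-1}}+K(\e)\|\oo\|_{H^{r'-1}}$, we then conclude easily: by Lemma~\ref{L:MP0}(i) the map $g\mapsto B_{n,m}^0(g)$ is smooth, hence locally Lipschitz, from $H^r(\R)$ into $\kL(H^{r-1}(\R))$, and since $B_{n,m}^0(0)=0$ for $n\ge1$ and $B_{0,m}^0(0)=H$ we obtain $\|B_{n,m}^0(g)[\psi]-B_{n,m}^0(0)[\psi]\|_{H^{r-1}}\le C(\|f\|_{H^r})\|g\|_{H^r}\|\psi\|_{H^{r-1}}\le\tfrac{\nu}{2}\|\psi\|_{H^{r-1}}$ for $\e$ small. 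Adding the two bounds and recalling $B_{0,m}^0(0)[\psi]=H[\pi_N^\e\oo]$ and $\|\psi\|_{H^{r-1}}\le C\|\oo\|_{H^{r-1}}$ finishes the proof.

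The one point that requires genuine work is the replacement error, and this is where I expect the main difficulty. Here I would argue as follows: since $\psi(x-s)\ne0$ forces $|x-s|\ge1/\e$, where $g$ and $f$ coincide, the integral kernel of $B_{n,m}^0(f)-B_{n,m}^0(g)$ acting on $\psi$ is supported in $x\in\{|x|<1/\e\}$ and, by the mean value theorem applied to the smooth profile of the kernel, is bounded in modulus by $C\,s^{-2}\,|(1-\chi)(x)|\,|f(x)|$ (and also by the crude $C|s|^{-1}$, which controls the behaviour near $s=0$). On $\{|x|\le1/(2\e)\}$ the surviving $s$ obey $|s|\ge1/(2\e)$, so this part is a smoothing operator whose $\kL(L_2(\R))$-norm is $O(\e)$, while on the transition annulus $|f(x)|\le\sup_{|x|\ge1/(2\e)}|f|\to0$, so the remaining part is a singular integral with a uniformly small coefficient and hence small $\kL(L_2(\R))$-norm. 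Turning these $L_2$-smallness statements into the required estimate of the form $\tfrac{\nu}{2}\|\psi\|_{H^{r-1}}+K(\e)\|\oo\|_{H^{r'-1}}$ is then a matter of the standard Calder\'on--Zygmund and interpolation bookkeeping (interpolating the small $L_2$-bound against a smoothing bound and applying Young's inequality), exactly as in the localization arguments of \cite{AM21x}; the only new ingredient compared with those references is the elementary decay of $f$ at infinity recorded above.
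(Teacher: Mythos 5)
Your overall architecture is reasonable (commute $\pi_N^\e$ past $B_{n,m}^0(f)$ via Lemma~\ref{L:AL1}, freeze $f$, and exploit the smoothness of $g\mapsto B_{n,m}^0(g)$ from Lemma~\ref{L:MP0}(i)), and the first and last steps are correct: the commutator is lower order, and $\|g\|_{H^r}=\|\chi_\e f\|_{H^r}\to0$ (your Gagliardo computation is essentially right), so $\|B_{n,m}^0(g)-B_{n,m}^0(0)\|_{\kL(H^{r-1})}$ is eventually $\nu/2$-small. The genuine gap is the replacement error $(B_{n,m}^0(f)-B_{n,m}^0(g))[\psi]$. Your pointwise kernel bounds $C\,s^{-2}|(1-\chi)(x)||f(x)|$ and $C|s|^{-1}$ are correct, but they do not, by "standard Calder\'on--Zygmund and interpolation bookkeeping," yield the required estimate $\tfrac{\nu}{2}\|\pi_N^\e\oo\|_{H^{r-1}}+K\|\oo\|_{H^{r'-1}}$. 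On the inner region $\{|x|\le 1/(2\e)\}$ one can indeed get an $L_2\to H^1$ bound with $K(\e)$, which is lower order. But on the transition annulus the operator is a genuine singular integral of the schematic form $c(x)\cdot(\text{CZ operator})[\psi]+\ldots$, where $c$ has small $L_\infty$ norm but not small $H^{r-1}$ norm; by the multiplier estimate \eqref{MES}, $\|c\,T\psi\|_{H^{r-1}}\lesssim \|c\|_\infty\|T\psi\|_{H^{r-1}}+\|c\|_{H^{r-1}}\|T\psi\|_\infty$, and the second summand is of size $\|\psi\|_{H^{r-1}}$ without any small prefactor. So the claimed $H^{r-1}$-smallness of the transition part is not established, and I do not see how to obtain it from $L_2$-smallness plus interpolation alone, because $B_{n,m}^0(f)-B_{n,m}^0(g)$ is \emph{not} small in $\kL(H^{r-1})$ (note $\|f-g\|_{H^r}\not\to0$).

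The argument can be rescued by cutting $f$ \emph{near the origin} rather than near infinity, which swaps which of the two error terms inherits the smallness and which one inherits the smoothing. Set $g_R:=\chi_R f$ with $\chi_R$ equal to $1$ on $\{|x|\le R\}$ and supported in $\{|x|\le 2R\}$. Then $\|f-g_R\|_{H^r}\to0$ as $R\to\infty$, so Lemma~\ref{L:MP0}(i) gives $\|B_{n,m}^0(f)-B_{n,m}^0(g_R)\|_{\kL(H^{r-1})}\le\nu/2$ for $R$ large; this handles the $\nu$-small term without any kernel analysis. For the remaining term $(B_{n,m}^0(g_R)-B_{n,m}^0(0))[\psi]$ with $\psi=\pi_N^\e\oo$ supported in $\{|x|\ge1/\e\}$: if $1/\e>4R$ then $g_R(x-s)=0$ on $\supp\psi(\cdot-s)$, so the two kernels coincide unless $g_R(x)\ne0$, forcing $|x|\le 2R$ and hence $|s|\ge 1/\e-2R\ge 1/(2\e)$. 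Thus the kernel of the difference is supported away from $s=0$, and the operator is genuinely smoothing with $H^{r-1}$-norm bounded by $K(\e,R)\|\oo\|_{L_2}\le K\|\oo\|_{H^{r'-1}}$. This yields exactly the two-term structure in the statement, without the problematic singular-integral-with-small-coefficient step. (The paper itself gives no proof here; it cites an external lemma, so I cannot compare against its route, but your version as written is incomplete at the point indicated.)
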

\begin{proof}
See  \cite[Lemma~15]{AM21x}.
\end{proof}

\pagebreak

We are now in a position to prove Theorem~\ref{T:AP}.

\begin{proof}[Proof of Theorem~\ref{T:AP}]
Let $\e\in(0,1)$ be given. Let further $\{(\pi_j^\e,x_j^\e)\,:\, -N+1\leq j\leq N\}$  be a finite $\e$-localization family and 
  $\{\chi_j^\e\,:\, -N+1\leq j\leq N\}$  a  second family with the following properties:
\begin{align*}
\bullet\,\,\,\, \,\,  &\text{$\chi_j^\e\in{\rm C}^\infty(\mathbb{R},[0,1])$ and $\chi_j^\e=1$ on $\supp \pi_j^\e$, $-N+1\leq j\leq N$;} \\[1ex]
\bullet\,\,\,\, \,\,  &\text{$\supp \chi_j^\e$ is an interval  of length $3\e$, $|j|\leq N-1$, and $\supp\chi_N^\e\subset \{|x|\geq 1/\e-\e\}$.}  
\end{align*} 
In the arguments that follow we use several times  the  estimate
\begin{align}\label{MES}
\|uv\|_{H^{r-1}}\leq  C(\|u\|_\infty\|v\|_{H^{r-1}}+\|v\|_\infty\|u\|_{H^{r-1}})
\end{align} 
which holds for all $u,\, v\in H^{r-1}(\mathbb{R})$ and $r\in(3/2,2)$,  with  $C$ independent of $u$ and $v$.
Moreover, we denote by~$C$ constants that do not depend on~$\e$, while constants~$K$  may  depend on~$\e$.
We divided the proof in four main steps.
\medskip

\noindent{Step 1: The term $\p_f\B_1(X)[u][\oo]$.}  In view of \cite[Equations (4.19) and (4.20)]{AM21x}, if $\varepsilon$ is sufficiently small, then we have
\begin{equation}\label{T1aj}
\begin{aligned}
  &\Big\|\pi_j^\varepsilon\partial\mathbb{B}(f)[u][\overline{\omega}_1] 
  + \frac{\overline{\omega}_1(x_j^\varepsilon) f'(x_j^\varepsilon)}{1+f'^2(x_j^\varepsilon)}H[(\pi_j^\varepsilon u)']
  -B_{1,1}^0(f)[\overline{\omega}_1](x_j^\e)(\pi_j^\varepsilon u)'\Big\|_{H^{r-1} } \\[1ex]
  &\hspace{2cm}\leq \frac{\mu}{4}\|\pi_j^\varepsilon u\|_{H^{r}} +K\|u\|_{H^{r'}}
\end{aligned}
\end{equation}
for all $ |j|\leq N-1$ and  $u\in H^{r}(\mathbb{R}) ,$ and 
\begin{align}\label{T1aN}
  \|\pi_N^\varepsilon\partial\mathbb{B}(f)[u][\overline{\omega}_1] \|_{H^{r-1}} \leq \frac{\mu}{4}\|\pi_N^\varepsilon u\|_{H^{r}} +K\|u\|_{H^{r'}}
\end{align}
for   all $u\in H^r(\mathbb{R})$.

Moreover, the estimate \eqref{MES} together with the identity $\chi_j^\e\pi_j^\e=\pi_j^\e$, $-N+1\leq j\leq N$, yields, in view of  $a_1(X)\in {\rm C}^{1/2}(\R),$   that 
\begin{equation}\label{T1bj}
\begin{aligned}
\|\pi_j^\e a_1(X)u'- a_1(X)(x_j^\e)(\pi^\e_j u)'\|_{H^{r-1}}&\leq\|\chi_j^\e \big(a_1(X)-a_1(X)(x_j^\e)\big)(\pi^\e_j u)'\|_{H^{r-1}}+K\|u\|_{H^{r-1}}\\[1ex]
&\leq C\|\chi_j^\e \big(a_1(X)-a_1(X)(x_j^\e)\big)\|_\infty\|\pi^\e_j u\|_{H^{r}}+K\|u\|_{H^{r'}}\\[1ex]
&\leq  \frac{\mu}{4} \|\pi_j^\e u\|_{H^r}+K\|  u\|_{H^{r'}},\qquad |j|\leq N-1,
\end{aligned}
\end{equation}
if $\e$ is sufficiently small, respectively, by taking into account that $a(X)$ vanishes at infinity, 
\begin{equation}\label{T1bN}
\begin{aligned}
\|\pi_N^\e a_1(X)u' \|_{H^{r-1}}&\leq\|\chi_N^\e a_1(X)(\pi_N^\e u)'\|_{H^{r-1}}+K\|u\|_{H^{r-1}}\\[1ex]
&\leq C\|\chi_N^\e a_1(X)\|_{\infty}\|\pi^\e_N u\|_{H^{r}}+K\|u\|_{H^{r'}}\\[1ex]
&\leq  \frac{\mu}{4} \|\pi_N^\e u\|_{H^r}+K\|  u\|_{H^{r'}}
\end{aligned}
\end{equation}
for all  $u\in H^r(\R).$

Recalling \eqref{estf1}, we have
\begin{equation}\label{T1c}
\| \pi_j^\e T_{\rm lot}^1[u]\|_{H^{r-1}}\leq K\|u\|_{H^{r'}}
\end{equation}
for all $u\in H^r(\R) $  and $-N+1\leq j\leq N$, and therewith we have localized all three summands  in the formula~\eqref{derb1f} for~$\p_f\B_1(X)[u][\oo]$.\medskip

\noindent{Step 2: The term $T(X)[\p_f\oo_2(X)[u]]$.} Combining \eqref{SSS} and \eqref{lowreg2}, we have
\begin{equation}\label{T2}
\begin{aligned}
\| \pi_j^\e T(X)[\p_f\oo_2(X)[u]]\|_{H^{r-1}}&\leq K\|T(X)[\p_f\oo_2(X)[u]]\|_{H^{r-1}}\leq K\| \p_f\oo_2(X)[u] \|_{2}\\[1ex]
&\leq K\| \p_f\oo_2(X)[u] \|_{H^{r'-1}}\leq K\| u \|_{H^{r'}}
\end{aligned}
\end{equation}
for all $u\in H^r(\R) $, $\e\in(0,1)$,  and $-N+1\leq j\leq N$.\medskip

\noindent{Step 3: The term $\bB(\tau f)[w_1(\tau)[u]]$.}  We divide  this   step into two substeps.\medskip

\noindent{Step 3a.} We prove there exists a positive  constant $C_0$  such that  
\begin{align}\label{HT3}
\|\pi_j^\varepsilon w_1(\tau)[u]\|_{H^{r-1}}\leq  C_0\|\pi_j^\varepsilon u\|_{H^{r}}+ K\|u\|_{H^{r'}}
\end{align}
for all    $\e\in(0,1)$, $-N+1\leq j\leq N$, $\tau\in[0,1]$, and $u\in H^r(\R) $.
To this end we multiply~\eqref{pathwtf} by~$\pi_j^\e$  and arrive at
\begin{equation}\label{HT3a}
\begin{aligned}
(1-a_\mu^1\bA(\tau f))[\pi_j^\e w_1(\tau)[u]]&=\Theta_1 \pi_j^\e u'+ a_\mu^1\big(\pi_j^\e\bA(\tau f)[ w_1(\tau)[u]]-\bA(\tau f)[\pi_j^\e w_1(\tau)[u]]\big)\\[1ex]
&\hspace{0,45cm}+a_\mu^1\pi_j^\e\big(\tau\p\bA(f)[u][\oo_1]+\tau a_2(X)u'\\[1ex]
&\hspace{2.27cm}+ \tau T_{\rm lot}^2[u]+(1-\tau)u'\Phi_1(X)\big).
\end{aligned}
\end{equation}
In view of $\eqref{isom}_2$ and \eqref{RegAB}, it remains to show that  the $H^{r-1}$-norm of the right side of \eqref{HT3a} may be estimated by the right side of \eqref{HT3}.
To start, we infer from Lemma~\ref{L:AL1} and \eqref{we1} that
\begin{equation}\label{HT3b}
\|\pi_j^\e\bA(\tau f)[ w_1(\tau)[u]]-\bA(\tau f)[\pi_j^\e w_1(\tau)[u]]\|_{H^{r-1}}\leq  K\|w_1(\tau)[u]\|_{2} \leq  K\|u\|_{H^{r'}}
\end{equation}
for all $\e\in(0,1)$, $-N+1\leq j\leq N$, $\tau\in[0,1]$, and $u\in H^r(\R)$.
Furthermore, taking into account that~${a_2(X)\in H^1(\R)}$ and~$\Phi_1(X)\in H^{r-1}(\R)$, we further have
\begin{equation}\label{HT3c}
\|\Theta_1 \pi_j^\e u'+a_\mu^1\pi_j^\e(\tau a_2(X)u'+(1-\tau)u'\Phi_1(X))\|_{H^{r-1}}\leq C\|\pi_j^\e u'\|_{H^{r-1}} \leq  C\|\pi_j^\e u\|_{H^{r}}+K\|u\|_{H^{r'}}.
\end{equation}
Recalling \eqref{estf2}, we get
\begin{equation}\label{HD3d}
\| \pi_j^\e T_{\rm lot}^2[u]\|_{H^{r-1}}\leq K\|u\|_{H^{r'}},
\end{equation}
and it remains  to estimate the term $\pi_j^\e \tau\p\bA(f)[u][\oo_1],$ where
\begin{equation*}
\begin{aligned}
  \pi\partial\mathbb{A}(f)[u][\overline{\omega}_0]&=u'B_{0,1}(f)[\overline{\omega}_1]-2f'B_{2,2}(f,f)[f,u,\overline{\omega}_1]\\[1ex]
  &\hspace{0.424cm}-B_{1,1}(f)[u,\overline{\omega}_1]+2B_{3,2}(f,f)[f,f,u,\overline{\omega}_1], \quad u\in H^r(\mathbb{R}),
\end{aligned}
\end{equation*}
cf. \cite[Equation 4.7]{AM21x}.
Invoking Lemma~\ref{L:MP0}~(ii), we deduce that
\begin{equation}\label{derrepA}
 \partial\mathbb{A}(f)[u][\overline{\omega}_1]=u'B_{0,1}(f)[\overline{\omega}_1]+\overline{\omega}_1\big(-2f'B_{1,2}^0(f)[u']-B_{0,1}^0(f)[u']+2B_{2,2}^0(f)[u']\big)+T_{\rm lot}^3[u],
\end{equation}
where 
\begin{equation}\label{derrepA'}
\| T_{\rm lot}^3[u]\|_{H^{r-1}}\leq C\|u\|_{H^{r'}}.
\end{equation}
Since for $n,\, m\in\N$ we have
\begin{align*}
 \|\pi_j^\e B_{n,m}^0(f)[u']\|_{H^{r-1}}&\leq  \| B_{n,m}^0(f)[\pi_j^\e u']\|_{H^{r-1}}+\|[B_{n,m}^0(f),\pi_j^\e][u']\|_{H^{r-1}}\\[1ex]
 &\leq C\|\pi_j^\e u'\|_{H^{r-1}}+K\|u'\|_2\\[1ex]
 &\leq C\|\pi_j^\e u\|_{H^{r}}+K\|u\|_{H^{r'}},
\end{align*}
cf. Lemma~\ref{L:MP0}~(i) and Lemma~\ref{L:AL1}, we conclude that 
\begin{equation}\label{HT3e}
\|\pi_j^\e \tau\p\bA(f)[u][\oo_1]\|_{H^{r-1}} \leq  C\|\pi_j^\e u\|_{H^{r}}+K\|u\|_{H^{r'}}.
\end{equation}
Gathering \eqref{HT3a}-\eqref{HT3e}, it now follows from   $\eqref{isom}_2$ and \eqref{RegAB} that \eqref{HT3} indeed holds true.\medskip

\noindent{Step 3b.} Let $C_0$ be the constant from \eqref{HT3}. In view of \eqref{Forab}, \eqref{we1}, and    Lemma~\ref{L:AL2} (when~${|j|\leq N-1}$), respectively   
 Lemma~\ref{L:AL4} and Lemma \ref{L:AL5} (when $j=N$),  for sufficiently small~$\e$   we have
\begin{equation}\label{T3a}
\begin{aligned}
\|\pi_j^\varepsilon \bB(\tau f)[w_1(\tau)[u]]-H[\pi_j^\e w_1(\tau)[u]]\|_{H^{r-1}}&\leq  \frac{\mu}{4C_0}\|\pi_j^\varepsilon w_1(\tau)[u]\|_{H^{r-1}}+ K\|w_1(\tau)[u]\|_{H^{r'-1}}\\[1ex]
&\leq  \frac{\mu}{4}\|\pi_j^\varepsilon u\|_{H^{r}}+ K\|u\|_{H^{r'}}
\end{aligned}
\end{equation}
for all  $-N+1\leq j\leq N$, $\tau\in[0,1]$, and $u\in H^r(\R).$
We further define 
\[
\varphi_\tau:=\Theta_1+\tau a_\mu^1 a_2(X)+(1-\tau)a_\mu^1\Phi_1(X)+\tau a_\mu^1B_{0,1}(f)[\oo_1], \qquad \tau\in[0,1].
\]
We prove below that  if $\e$ is sufficiently small, then
\begin{equation}\label{T3aj}
\Big\|H[\pi_j^\e w_1(\tau)[u]]-\varphi_\tau(x_j^\e)H[(\pi_j^\e u)']-\frac{\tau a_\mu^1\oo_1(x_j^\e)}{1+f'^2(x_j^\e)}(\pi_j^\e u)'\Big\|_{H^{r-1}}\leq   \frac{\mu}{4}\|\pi_j^\varepsilon u\|_{H^{r}}+ K\|u\|_{H^{r'}}
\end{equation}
for all $ |j|\leq N-1$, $\tau\in[0,1]$, and $u\in H^r(\R)$, respectively that 
\begin{equation}\label{T3bN}
\|H[\pi_N^\e w_1(\tau)[u]]-\Theta_1 H[(\pi_N^\e u)']\|_{H^{r-1}}\leq   \frac{\mu}{4}\|\pi_N^\varepsilon u\|_{H^{r}}+ K\|u\|_{H^{r'}}
\end{equation}
for all $\tau\in[0,1]$ and $u\in H^r(\R).$
Indeed, in view of   $H^2=-{\rm id}_{H^{r-1}(\R)}$  and ${\|H\|_{\kL(H^{r-1}(\R))}=1}$, we get
\begin{equation*}
\begin{aligned}
&\hspace{-0.5cm}\Big\|H[\pi_j^\e w_1(\tau)[u]]-\varphi_\tau(x_j^\e)H[(\pi_j^\e u)']-\frac{\tau a_\mu^1\oo_1(x_j^\e)}{1+f'^2(x_j^\e)}(\pi_j^\e u)'\Big\|_{H^{r-1}}\\[1ex]
&\leq  \Big\| \pi_j^\e w_1(\tau)[u] -\varphi_\tau(x_j^\e) (\pi_j^\e u)' +\frac{\tau a_\mu^1\oo_1(x_j^\e)}{1+f'^2(x_j^\e)}H[(\pi_j^\e u)']\Big\|_{H^{r-1}}
\end{aligned}
\end{equation*}
for $|j|\leq N-1$, respectively
\begin{equation*}
\|H[\pi_N^\e w_1(\tau)[u]]-\Theta_1 H[(\pi_N^\e u)']\|_{H^{r-1}}\leq   \|\pi_N^\e w_1(\tau)[u]-\Theta_1 (\pi_N^\e u)'\|_{H^{r-1}}.
\end{equation*}
In order to estimate the right sides of the latter two estimates let first~${|j|\leq N-1}$. Multiplying   the equation \eqref{pathwtf} by $\pi_j^\e,$ we arrive at
\begin{equation*}
\pi_j^\e w_1(\tau)[u] -\varphi_\tau(x_j^\e) (\pi_j^\e u)' +\frac{\tau a_\mu^1\oo_1(x_j^\e)}{1+f'^2(x_j^\e)}H[(\pi_j^\e u)']=T_1+T_2+T_3+T_4,
\end{equation*}
where
\begin{align*}
T_1&\coloneqq \pi_j^\e(\Theta_1+\tau a_\mu^1 a_2(X)+(1-\tau)a_\mu^1\Phi_1(X)) u'-(\Theta_1+\tau a_\mu^1 a_2(X)+(1-\tau)a_\mu^1\Phi_1(X))(x_j^\e) (\pi_j^\e u)',\\[1ex]
T_2&\coloneqq  \tau a_\mu^1\Big( \pi_j^\e\p\bA(f)[u][\oo_1]-B_{0,1}^0(f)[\oo_1](x_j^\e)(\pi_j^\e u)'+\frac{\oo_1(x_j^\e)}{1+f'^2(x_j^\e)}H[(\pi_j^\e u)']\Big),\\[1ex]
T_3&\coloneqq a_\mu^1\pi_j^\e \bA(\tau f)[w_1(\tau)[u]],\\[1ex]
T_4&\coloneqq \tau a_\mu^1\pi_j^\e T_{\rm lot}^2[u].
\end{align*}
Since $a_2(X),\, \Phi_1(X)\in {\rm C}^{r-3/2}(\R),$ the arguments used to derive \eqref{T1bj}   together with \eqref{estf2} yield
\[
\|T_1\|_{H^{r-1}}+\|T_4\|_{H^{r-1}}\leq  \frac{\mu}{12}\|\pi_j^\varepsilon u\|_{H^{r}}+ K\|u\|_{H^{r'}}.
\]
Furthermore, recalling \eqref{derrepA} and \eqref{derrepA'},  the arguments used  to derive \eqref{T1bj} and repeated use of Lemma~\ref{L:AL2} lead  to
\[
\|T_2\|_{H^{r-1}}\leq  \frac{\mu}{12}\|\pi_j^\varepsilon u\|_{H^{r}}+ K\|u\|_{H^{r'}}.
\] 
Finally, combining \eqref{Forab}, Lemma~\ref{L:AL2},   \eqref{we1}, and \eqref{HT3} we get
\begin{align*}
\|T_3\|_{H^{r-1}}&\leq |a_\mu^1|\Big\| \tau f'B_{0,1}^0(\tau f)[w_1(\tau)[u]]- \frac{\tau f'(x_j^\e)}{1+\tau ^2f'^2(x_j^\e)}H[\pi_j^\e w_1(\tau)[u]]\Big\|_{H^{r-1}}\\[1ex]
&\hspace{0,45cm}+|a_\mu^1|\Big\|  B_{1,1}^0(\tau f)[w_1(\tau)[u]]- \frac{\tau f'(x_j^\e)}{1+\tau ^2f'^2(x_j^\e)}H[\pi_j^\e w_1(\tau)[u]]\Big\|_{H^{r-1}}\\[1ex]
&\leq  \frac{\mu}{12}\|\pi_j^\varepsilon u\|_{H^{r}}+ K\|u\|_{H^{r'}},
\end{align*}
 provided that $\e$ is sufficiently small.  Herewith we established \eqref{T3aj}. 
 
 Let now $j= N$. Multiplying   the equation \eqref{pathwtf} by $\pi_N^\e$ we arrive at
\begin{equation*}
\pi_N^\e w_1(\tau)[u] -\Theta_1 (\pi_N^\e u)' =T_5+T_6,
\end{equation*}
where
\begin{align*}
T_5&\coloneqq \pi_N^\e(\Theta_1+\tau a_\mu^1 a_2(X)+(1-\tau)a_\mu^1\Phi_1(X)) u'-\Theta_1 (\pi_N^\e u)' ,\\[1ex]
T_6&\coloneqq  \tau a_\mu^1\pi_N^\e\p\bA(f)[u][\oo_1]+ a_\mu^1\pi_N^\e \bA(\tau f)[w_1(\tau)[u]]+ \tau a_\mu^1\pi_N^\e T_{\rm lot}^2[u].
\end{align*}
Since both $a_2(X)$ and $ \Phi_1(X)$ vanish at infinity, we obtain, by arguing as in the derivation of~\eqref{T1bN}, that    
\[
\|T_5\|_{H^{r-1}} \leq  \frac{\mu}{8}\|\pi_N^\varepsilon u\|_{H^{r}}+ K\|u\|_{H^{r'}}.
\]
Furthermore, the relations \eqref{Forab}, \eqref{estf2}, \eqref{HT3}, \eqref{derrepA},  \eqref{derrepA'},     the fact that~${B_{0,1}(f)[\oo_1]\in H^{r-1}(\R)}$ vanishes at infinity, Lemma~\ref{L:AL4}, and Lemma~\ref{L:AL5} lead us to
\[
\|T_6\|_{H^{r-1}}\leq  \frac{\mu}{8}\|\pi_N^\varepsilon u\|_{H^{r}}+ K\|u\|_{H^{r'}},
\]
 provided that $\e$ is sufficiently small.  This proves  \eqref{T3bN}.
 
Combining \eqref{T3a} and  \eqref{T3aj}, we conclude that if $\e$ is sufficiently small, then
  \begin{equation}\label{T3j}
\Big\|\pi_j^\varepsilon \bB(\tau f)[w_1(\tau)[u]]
-\varphi_\tau(x_j^\e)H[(\pi_j^\e u)']-\frac{\tau a_\mu^1\oo_1(x_j^\e)}{1+f'^2(x_j^\e)}(\pi_j^\e u)'\Big\|_{H^{r-1}}\leq   \frac{\mu}{2}\|\pi_j^\varepsilon u\|_{H^{r}}+ K\|u\|_{H^{r'}}
\end{equation}
for all $|j|\leq N-1$, $\tau\in[0,1],$ and $u\in H^r(\R) $.
If $j=N$, we conclude from \eqref{T3a} and \eqref{T3bN} that, if $\e$ is sufficiently small, then
 \begin{equation}\label{T3N}
\|\pi_N^\varepsilon \bB(\tau f)[w_1(\tau)[u]]-\Theta_1 H[(\pi_N^\e u)']\|_{H^{r-1}}\leq   \frac{\mu}{2}\|\pi_N^\varepsilon u\|_{H^{r}}+ K\|u\|_{H^{r'}}
\end{equation}
for all  $\tau\in[0,1]$ and $u\in H^r(\R) $.
\medskip

\noindent{Step 4.}  The desired claim \eqref{D1} follows, in the case $j=N$, directly from \eqref{T1aN}, \eqref{T1bN}, \eqref{T1c}, \eqref{T2}, and \eqref{T3N}. 
If $|j|\leq N-1$, we infer from \eqref{T1aj}, \eqref{T1bj}, \eqref{T1c}, \eqref{T2}, and \eqref{T3j} that the claim \eqref{D1} holds true, but for the coefficient function $\alpha_\tau$ defined  in \eqref{defalphabeta} we obtain the formula
\[
\alpha_\tau\coloneqq \Theta_1-\frac{\tau \oo_1 f'}{1+f'^2} +\tau a_\mu^1a_2(X)+(1-\tau)a_\mu^1\Phi_1(X)+\tau a_\mu^1B_{0,1}^0(f)[\oo_1],\qquad \tau\in[0,1].
\]
However, recalling the definition \eqref{fixedom} of $\oo$, the definitions of $a_i(X),$ $i=1,\, 2$, the definition~\eqref{PHI} of $\Phi_1(X),$  and the formulas \eqref{Forab} and \eqref{ForAB}, we derive the following relations
\begin{align*}
\oo_1&=\Theta_1f'+a_\mu^1\big(f'B_{0,1}(f)[\oo_1] -B_{1,1}^0(f)[\oo_1]+f'a_2(X)-a_1(X)\big),\\[1ex]
\Phi_1(X)&=B_{0,1}^0(f)[\oo_1]+f'B_{1,1}^0(f)[\oo_1]+a_2(X)+f'a_1(X).
\end{align*}
Replacing $\oo_1$ in the second term of the formula for $\alpha_\tau$ by the expression found above, we get, by using the identity for $\Phi_1(X)$, that $\alpha_\tau$ can be  indeed  expressed as in  \eqref{defalphabeta}.
This completes the proof.
\end{proof}

We now consider the Fourier multipliers defined in \eqref{defAjtau} more closely. The Rayleigh-Taylor condition $\Theta_1+a_\mu^1\Phi_1(X)<0$, together with the fact that 
$f'$, $\Phi_1(X)$, $\oo_1$, $a_1(X)$, $B_{1,1}^0(f)[\oo_1]$ all belong to $ H^{r-1}(\R)$
implies there exists $\eta\in(0,1)$ such that the coefficient functions $\alpha_\tau$ and $\beta_\tau$ of the Fourier multipliers, cf. \eqref{defalphabeta},  satisfy
\begin{align*}
\eta\leq -\alpha_\tau\leq \frac{1}{\eta}\quad\text{and}\quad \|\beta_\tau||_\infty\leq \frac{1}{\eta}\qquad\text{for all $\tau\in[0,1]$.}
\end{align*}
Consequently,  classical Fourier analysis arguments imply there exists $\kappa_0=\kappa_0(\eta)\geq 1$ such that 
\begin{align}
\bullet &\quad \mbox{$\lambda-\bA_{\alpha,\beta}\in \kL(H^r(\R),H^{r-1}(\R))$ is an isomoprphism for all $ \re\lambda\geq 1,$}\label{L:FM1}\\[1ex]
\bullet &\quad  \kappa_0\|(\lambda-\bA_{\alpha,\beta})[u]\|_{H^{r-1}}\geq |\lambda|\cdot\|u\|_{H^{r-1}}+\|u\|_{H^r}, \qquad \forall\, u\in H^r(\R),\, \re\lambda\geq 1\label{L:FM2},
\end{align}
uniformly for $\bA_{\alpha,\beta}\coloneqq{} \alpha(-d^2/dx^2)^{1/2}+\beta (d/dx)$ with $-\alpha\in[\eta,1/\eta],$ $|\beta|\leq 1/\eta$.
The properties~\eqref{L:FM1}-\eqref{L:FM2} together with our previous results enable us to obtain the desired generator property for~${\p_f\Phi_1(X)}$.

\begin{proof}[Proof of Theorem~\ref{T:GP12}]
(i) Arguing as in \cite[Theorem~4.1]{AM21x}, we  find  in view of \eqref{EQNO}, \eqref{L:FM1}-\eqref{L:FM2}, and of Theorem~\ref{T:AP} constants~$\kappa=\kappa(X)\geq1$  and $\omega=\omega(X)>0 $ such that 
  \begin{align}\label{KDED}
   \kappa\|(\lambda-\Psi(\tau))[u]\|_{H^{r-1}}\geq |\lambda|\cdot\|u\|_{H^{r-1}}+ \|u\|_{H^{r}}
 \end{align}
for all   $\tau\in[0,1],$   $\re \lambda\geq \omega$, and  $u\in H^{r}(\R)$.
Furthermore, recalling Remark \ref{R:2}~(ii), we infer from \cite[Proposition 1]{AM21x} that~${\omega-\Psi(0)\in {\rm Isom\,}(H^{r}(\R), H^{r-1}(\R))}$.
The method of continuity, see e.g.~\cite[Proposition I.1.1.1]{Am95}, and \eqref{KDED} imply that~${\omega-\Psi(1)=\omega-\p_f\Phi_1(X)}$ also belongs to~${\rm Isom\,}(H^{r}(\R), H^{r-1}(\R))$. 
In view of this property and of \eqref{KDED} (with $\tau=1$) we finally conclude that~$-\p_f\Phi_1(X)\in\mathcal{H}(H^{r}(\R),H^{r-1}(\R))$, cf. \cite[Chapter~I]{Am95}.  

(ii) The generator property for $\p_h\Phi_2(X)$ follows by using similar arguments and therefore we omit its proof (see \cite{JBThesis} for details). 
\end{proof}

 \subsection{The proof of the main result}

We finally come to the proof of our main result which, in view of the abstract parabolic theory presented in \cite[Chapter 8]{L95}, is now obtained as a
consequence of the smoothness property established in Corollary~\ref{Cor2} and of the fact that the  evolution problem~\eqref{NNEP}
is parabolic in $\cV_r,$ cf. Theorem~\ref{T:GP}.
\begin{proof}[Proof of Theorem~\ref{MT1}]
 Given $\alpha\in(0,1)$,   $T>0$, and a Banach space $X$  we set
 \begin{align*}
 {\rm C}^{\alpha}_{\alpha}((0,T], X):=\Big\{f:(0,T]\longrightarrow X\,:\,\|f\|_{C_\alpha^\alpha}:=\sup_t\|f(t)\|+\sup_{s\neq t}\frac{\|t^\alpha f(t)-s^\alpha f(s)\|}{|t-s|^\alpha}<\infty\Big\}.
 \end{align*}
Corollary~\ref{Cor2} and Theorem \ref{T:GP} ensure  that the assumptions of \cite[Theorem~8.1.1]{L95} 
are all satisfied in the context of  the evolution problem~\eqref{NNEP}.
Applying this theorem,  we find for each~${X\in \cV_r}$, a local solution $X(\cdot;X_0)$ to \eqref{NNEP} such that
\[ X\in {\rm C}([0,T],\cV_r)\cap {\rm C}^1([0,T], H^{r-1}(\mathbb{R})^2)\cap {\rm C}^{\alpha}_{\alpha}((0,T], H^r(\mathbb{R})^2),\] 
where   $T=T(X_0)>0$ and   $\alpha\in(0,1)$ is fixed (but arbitrary).
This solution is unique within the set
\[
  \bigcup_{\beta\in(0,1)}{\rm C}^{\beta}_{\beta}((0,T],H^r(\mathbb{R})^2) \cap {\rm C}([0,T],\cV_r)\cap {\rm C}^1([0,T], H^{r-1}(\mathbb{R})^2).
 \]
As stated in Theorem~\ref{MT1}, the uniqueness   holds true in  
${\rm C}([0,T],\cV_r)\cap {\rm C}^1([0,T], H^{r-1}(\mathbb{R})^2).$
Indeed, let $X\in {\rm C}([0,T],\cV_r)\cap {\rm C}^1([0,T], H^{r-1}(\mathbb{R})^2)$ be a solution to \eqref{NNEP}, let~${r'\in (3/2,r)}$   be fixed, and   set~${\alpha:= r-r'\in(0,1)}$. 
It then holds 
 \begin{equation*}
\|X(t_1)-X(t_2)\|_{H^{r'}}\leq \|X(t_1)-X(t_2)\|_{H^{r-1}}^{\alpha}\|X(t_1)-X(t_2)\|_{H^{r}}^{1-\alpha}   \leq C|t_1-t_2|^\alpha,\quad t_1,\, t_2\in[0, T],
 \end{equation*}
which shows in particular that  $X \in   {\rm C}^{\alpha}_{\alpha}((0,T], H^{r'}(\mathbb{R})^2)$.
Applying the uniqueness statement of \cite[Theorem 8.1.1]{L95}  in the context of~\eqref{NNEP} with 
$\Phi\in {\rm C}^{\infty}(\cV_{r'}, H^{r'-1}(\mathbb{R})^2),$
now implies the desired uniqueness assertion.
This unique solution can be extended up to a maximal existence time~${T^+(X_0)}$, see \cite[Section 8.2]{L95}.
Moreover, it follows from the proof of Theorem~\ref{T:1} that the velocities and the pressures enjoy all the properties mentioned in Theorem~\ref{MT1}.  \medskip
 
 (i) The continuous dependence of the solution on the initial data  follows from~\cite[Proposition~8.2.3]{L95}. \medskip
  
  (ii) The parabolic smoothing properties follow by using a parameter trick which was successfully applied also to other problems, see  \cite{An90, ES96, PSS15, MBV19, AM21x}.
Since the details are very similar to those in \cite[Theorem~2~(ii)]{AM21x}  we omit them.
\end{proof}

\subsection*{Acknowledgement}
The authors gratefully acknowledge the support by the RTG 2339
 ``Interfaces, Complex Structures, and Singular Limits'' of the German Science Foundation (DFG).

 \bibliographystyle{siam}
\bibliography{JB2}
\end{document}